\documentclass{amsart}

\usepackage[margin=1in]{geometry}
\usepackage{amsfonts,amsmath,amssymb,amsthm}
\usepackage{esint,xfrac,mathtools}
\usepackage[normalem]{ulem}

\usepackage[dvipsnames]{xcolor}
\usepackage[colorlinks=true, pdfstartview=FitV, linkcolor=blue, citecolor=blue, urlcolor=blue, pdfencoding=auto, breaklinks=true]{hyperref}

\usepackage[T1]{fontenc}
\usepackage{lmodern}
\usepackage{enumerate}

\allowdisplaybreaks

\numberwithin{equation}{section}

\newtheorem{theorem}{Theorem}[section]

\newtheorem{proposition}[theorem]{Proposition}
\newtheorem{lemma}[theorem]{Lemma}
\newtheorem{definition}[theorem]{Definition}
\theoremstyle{definition}
\newtheorem{remark}[theorem]{Remark}
\newtheorem*{theorem-non}{Conjecture}

\newcommand{\norm}[1]{\left\|#1\right\|}
\newcommand{\abs}[1]{\left|#1\right|}
\newcommand{\brak}[1]{\left\langle#1\right\rangle}

\newcommand{\eps}{\varepsilon}
\renewcommand{\epsilon}{\varepsilon}

\newcommand{\N}{\mathbb{N}}
\newcommand{\bp}{\mathbb{P}}
\newcommand{\R}{\mathbb{R}}
\newcommand{\T}{\mathbb{T}}
\newcommand{\Z}{\mathbb{Z}}

\newcommand{\Id}{\mathrm{Id}}

\newcommand{\dd}{\, {\rm d}}
\newcommand{\pa}{\partial}

\newcommand{\ilr}{(-\Delta)^{-1}_{\R^3}}

\def\Xint#1{\mathchoice
{\XXint\displaystyle\textstyle{#1}}%
{\XXint\textstyle\scriptstyle{#1}}%
{\XXint\scriptstyle\scriptscriptstyle{#1}}%
{\XXint\scriptscriptstyle\scriptscriptstyle{#1}}%
\!\int}
\def\XXint#1#2#3{{\setbox0=\hbox{$#1{#2#3}{\int}$ }
\vcenter{\hbox{$#2#3$ }}\kern-.6\wd0}}

\def\dashint{\Xint-}

\newcommand{\les}{\lesssim}

\mathtoolsset{showonlyrefs}

\title[A Sharp Rigidity/Flexibility Threshold]{A sharp rigidity/flexibility threshold for\\ the isotropic Landau equation}
\author{Nicholas Gismondi, William Golding, and Matthew Novack}
\date{}
\thanks{M.N. was supported by the NSF through grant DMS-2307357.}

\begin{document}

\begin{abstract}
We establish a sharp rigidity/flexibility threshold for stationary solutions of the Krieger--Strain equation, an isotropic model of the Landau--Coulomb equation. For every $1< p < \sfrac65$, we use Nash iteration to construct nontrivial, nonnegative solutions in $L^1(\R^3) \cap L^p(\R^3)$ with arbitrarily strong exponential localization. Conversely, every stationary weak solution in $L^{\sfrac65}(\R^3)$ is trivial, identifying $L^{\sfrac65}(\R^3)$ as a new sharp integrability threshold.

To our knowledge, this is the first use of Nash iteration for a nonlinear equation from collisional kinetic theory. The construction is based on a high--high--low cancellation within the Krieger--Strain operator and suggests that such mechanisms may occur more broadly in kinetic theory. The construction must accommodate kinetic features unusual for the method including a strongly nonlocal collision operator; an equation fundamentally posed on the whole space---not the periodic box; and a positive scalar unknown.

At this low level of regularity, the usual formulations of the collision operator are not a priori well-defined, so a central part of the problem is specifying in what sense the constructed objects solve the equation. We isolate the notion of \emph{mollifier confluence}, a simple and canonical way to interpret a nonlinearity below naive thresholds related to multiplying distributions. We complement this definition with a systematic treatment of weak solution notions and several explicit formal computations and clarifying examples that may be of independent interest.

\end{abstract}

\maketitle

\setcounter{tocdepth}{1}
\tableofcontents

\section{Introduction}

\subsection{The Isotropic Landau Model}
In this paper, we study the isotropic Landau equation, a spatially homogeneous kinetic model first introduced by Krieger and Strain in \cite{KriegerStrain}; see also \cite{GressmanKriegerStrain}:
\begin{equation}\notag
    \partial_t f = Q_{\rm KS}(f) \qquad \text{on } \R^+\times \R^3.
\end{equation}
Here $f$ is an unknown probability density $f(t,v)\colon \R^+ \times \R^3 \to \R^+$ and $Q_{\rm KS}(f)$ is the corresponding collision operator.  The operator $Q_{\rm KS}$ is a mathematically convenient simplification of the Landau collision operator with Coulomb potential: it retains several of its central analytical features, including quadratic nonlinearity; nonlocality; the Coulomb singularity; and the associated scaling, while replacing the anisotropic diffusion matrix by a scalar coefficient.

For smooth functions with sufficient decay, $Q_{\rm KS}(f)$ admits several equivalent representations; it can be written in collisional form (the integral form below) or as a quasilinear elliptic operator in either divergence or non-divergence form (second and third lines below, respectively):
\begin{equation}\label{defn:collision_operator_smooth}
\begin{aligned}
    Q_{\rm KS}(f) &\coloneqq \nabla_v \cdot \int_{\R^3} \frac{f(w)\nabla_v f(v) - f(v)\nabla_w f(w)}{4\pi\abs{v-w}} \dd w\\
        &= \nabla_v \cdot \left(a[f]\nabla_v f - \nabla_v a[f]f\right)\\
        &= a[f]\Delta f + f^2\, .
\end{aligned}
\end{equation}
Here, the choice of $4\pi$ normalizes the nonlocal diffusion coefficient $a[f]$ to be the usual inverse Laplacian
\begin{equation*}
    a[f] \coloneqq (-\Delta)^{-1}_{\R^3} f = \frac{1}{4\pi} \int_{\R^3} \frac{f(w)}{\abs{v-w}} \dd w.
\end{equation*}
The Landau--Coulomb collision operator $Q_{\rm L}$ has an analogous elliptic structure, but its diffusion coefficient $A[f]$ is matrix-valued and anisotropic. More precisely, the Krieger--Strain model replaces the matrix coefficient $A[f]$ by the scalar coefficient $a[f] = \mathrm{tr}\,A[f]$. This replacement explains the term \textit{isotropic Landau} and makes the equation a natural testing ground for new methods for nonlinear kinetic equations.

\subsection*{Background on the Isotropic Landau Equation}

The collisional formulation in~\eqref{defn:collision_operator_smooth} yields conservation of mass, momentum, and an $H$-theorem. Unlike the Landau--Coulomb equation, however, the isotropic model does not conserve kinetic energy, which is instead monotone increasing with an explicit production functional; see Lemma~\ref{lem:conservation_laws} below. These identities provide the basic a priori estimates underlying the analysis of smooth solutions.

The first global smooth solutions were obtained by Gualdani and Guillen for a class of radially symmetric, decreasing data~\cite{GualdaniGuillen}. Subsequently, Gualdani and Zamponi constructed weak solutions for even initial data~\cite{GualdaniZamponi}. More recently, Bowman and Ji removed these symmetry restrictions and established the existence of global smooth solutions for a broad class of classical initial data~\cite{BowmanJi}.
They proved that for smooth solutions, the Fisher information 
\begin{equation}\notag
i(f) = \int_{\R^3} \frac{|\nabla f |^2}{f} \dd v = \int_{\R^3} |\nabla \log f|^2 f \dd v = 4 \int_{\R^3} |\nabla \sqrt{f}|^2 \dd v
\end{equation}
is monotone decreasing, using the framework developed by Guillen and Silvestre in~\cite{GuillenSilvestre}. The Sobolev embedding $H^1(\R^3) \hookrightarrow  L^6(\R^3)$ shows that the Fisher information controls the $L^3(\R^3)$ norm, a known subcritical quantity with respect to parabolic regularity, thereby resolving the question of finite time blowup for the smooth solutions to the Krieger--Strain equation. 

For the Landau--Coulomb equation, Ji proved that the Fisher information becomes finite instantaneously for nonnegative initial data $f_{\rm in} \in L^1_5\cap L\log L$, yielding solutions that are smooth for every positive time~\cite{Ji}. Conditional uniqueness of smooth solutions is known under additional spacetime integrability assumptions~\cite{Fournier,GoldingGualdaniLoherCritical} or for initial data satisfying $\brak{v}^m f_{\rm in} \in L^p$ for $p \ge \sfrac32$~\cite{ChernGualdani,GualdaniSun,HeJiLuo}. Below the $p = \sfrac32$ threshold, uniqueness remains widely open and a source of motivation for the present work.

In view of the shared structure of the two equations, it is natural to expect analogous rough-data existence and conditional uniqueness results for the Krieger--Strain equation, although these have not been established.

\subsection*{Guiding Analogy and Motivating Question}

One important consequence of the conservation laws and entropy dissipation is a quantitative lower bound on the diffusion coefficient:
\begin{equation}\label{diff:lower}
    a[f](t,v) \ge \frac{c_0}{\brak{v}}, \qquad \text{where }c_0 > 0\text{ depends only on time and the initial data.}
\end{equation}
A related anisotropic lower bound holds for the Landau--Coulomb diffusion $A[f]$; see \cite{Desvillettes, Ji2}. In both cases, this suggests a natural analogy with the semilinear heat equation
\begin{equation}\label{eq:semilinear_heat}
    \partial_t f = \Delta f + f^2.
\end{equation}
Heuristically, \eqref{diff:lower} suggests that the nonlocal, nonlinear diffusions in both Landau and Krieger--Strain are \emph{qualitatively stronger} than the corresponding linear diffusion in \eqref{eq:semilinear_heat}, and the quasilinear equations should inherit many stability and regularity properties known for \eqref{eq:semilinear_heat}.

The analogy is nevertheless one-sided. The semilinear heat equation admits finite time blowup for large data, whereas the nonlinear diffusions in Krieger--Strain and Landau suppress blowup via the decay of the Fisher information.
The kinetic equations therefore avoid the most severe form of instability of the semilinear model \eqref{eq:semilinear_heat}.

The analogy also offers some insight into the recurring regularity threshold $L^{\sfrac32}(\R^3)$. Both Landau and Krieger--Strain have two-parameter scaling symmetry groups, which do not select a single scaling--critical Lebesgue space. By contrast, \eqref{eq:semilinear_heat} has a one-parameter symmetry group that isolates $L^{\sfrac32}(\R^3)$ as scaling-critical. Whether this threshold is intrinsic to the kinetic models or merely an artifact of current proofs remains unclear.

Below this threshold, the semilinear heat equation additionally exhibits subtler forms of instability, even without blowup. Notably, for each $1 \le p < \sfrac32$, solutions belonging to $C([0,T];L^p(\R^3)) \cap C^\infty((0,T)\times\R^3)$ need not be unique; nontrivial, smooth forward self-similar solutions with zero initial data are constructed in \cite{HarauxWeissler}. This motivates our guiding question:
\begin{quote}
    Although the nonlinear diffusion in the Krieger--Strain equation prevents finite-time blowup, do weaker low-regularity instabilities of the semilinear heat equation persist?
\end{quote}

We do not address this question about \emph{dynamical} instabilities directly. Instead, we construct nontrivial, well-localized stationary solutions to the Krieger--Strain equation (Theorem \ref{thm:main} below). Here a new threshold emerges: nontrivial stationary solutions exist in $L^1(\R^3) \cap L^{p}(\R^3)$ for each $1 < p < \sfrac65$, whereas every stationary solution in $L^{\sfrac65}(\R^3)$ is trivial. Thus, the stationary problem selects a threshold distinct from the $L^{\sfrac32}(\R^3)$ exponent suggested by the semilinear heat equation.

These low-regularity stationary solutions exhibit a weak form of instability: they never regularize. Consequently, any class of weak solutions containing them cannot satisfy instantaneous $L^p\to L^\infty$ regularization estimates. They do not, however, establish instability for smooth solutions; whether they arise as limits of smooth solutions remains open.

\subsection*{Heuristic Discussion of Weak Solutions}

The central issue is therefore one of admissibility: What should count as a stationary weak solution at low regularity? Formal computations are illuminating, but insufficient. The semilinear model equation already illustrates this point. Consider, for example, the stationary elliptic problem associated to \eqref{eq:semilinear_heat},
\begin{equation}
    \Delta u + u^2 = 0 \quad \text{ on } \R^3 \, , \qquad \text{ which has multiple low-regularity solutions including }\quad u_{\rm sing} = -\frac{2}{\abs{x}^2}.
\end{equation}
The profile $u_{\rm sing}$ is signed and lies in the critical space $u_{\rm sing} \in L^{\sfrac32,\infty}(\R^3)\setminus L^{\sfrac32}(\R^3)$, precisely identifying the barrier to regularity for \eqref{eq:semilinear_heat}. Nevertheless, $u_{\rm sing}^2 \notin L^1_{\rm loc}(\R^3)$, so $u_{\rm sing}$ does not satisfy a standard distributional formulation. Interpreting $u_{\rm sing}$ as a stationary solution requires significant care and can be misleading.  Despite its apparent simplicity and relevance, it should be excluded; see Appendix \ref{appendix:power_law_elliptic} for details. 

Analogous, as well as genuinely nonlocal, difficulties arise when searching for exotic radial solutions to the Krieger--Strain equation. A formal computation produces several simple explicit profiles, including
\begin{equation}
    f_{\rm sing}(v) = C\abs{v}^{-\sfrac32}, \qquad \frac{\sin(\abs{v})}{\abs{v}}, \qquad \text{and} \qquad \frac{\sinh(\abs{v})}{\abs{v}}.
\end{equation}
However, none of these formal profiles is admissible as a steady state for the original kinetic equation. The oscillatory profile changes sign; the hyperbolic profile grows at infinity; and the power--law is both insufficiently localized and exhibits a hidden defect measure---it is more appropriately considered a solution in some generalized sense of $Q_{\rm KS}(f_{\rm sing}) = \delta_0$. Appendix~\ref{appendix:power_law} treats in detail the most instructive case, the power--law profile.

These failures do not make the radial analysis irrelevant. On the contrary, the reduction isolates the possible singular behaviors and reveals a surprisingly rich family of explicit formal solutions. For the present work, however, these examples motivate three basic admissibility requirements for stationary solutions:
\begin{itemize}
    \item \textit{Positivity.} The kinetic interpretation of $f$ as a probability density requires $f \ge 0$. This excludes sign-changing profiles such as $\sin(\abs{v})/\abs{v}$.
    \item \textit{Decay and Localization.} 
    The fundamentally nonlocal structure of the collision operator $Q_{\rm KS}$ makes the behavior of $f$ at large velocities relevant. We therefore require sufficient decay to control its tails, while the physical interpretation of moments further motivates imposing weighted $L^1$ localization. This excludes growing or insufficiently localized profiles such as $\sinh(\abs{v})/\abs{v}$.
    \item \textit{Absence of hidden defect measures.} Weak solutions should serve, at least heuristically, as possible limits of smooth solutions. We therefore seek a canonical interpretation of the equation that is stable under regularization. This excludes pointwise solutions that contain hidden defect measures such as the power-law $\abs{v}^{-\sfrac32}$; see Appendix~\ref{appendix:power_law} for details.
\end{itemize}
We now formalize these requirements by introducing notions of collisional weak solutions and mollifier-confluent solutions and proving consistency with the notions of classical solution defined by \eqref{defn:collision_operator_smooth}.

\subsection*{Notions of stationary solution}

For a smooth, rapidly decaying density $f$, the expressions in \eqref{defn:collision_operator_smooth} are equivalent and define $Q_{\rm KS}(f)$ unambiguously. At lower regularity, however, these expressions no longer have the same a priori meaning: the nondivergence form contains the product $f^2$, while the divergence and collisional forms involve nonlocal products whose definition requires both local regularity and control at infinity. Since we study the stationary equation $Q_{\rm KS}(f)=0$ below the natural integrability threshold for these products, the notion of solution must be made precise for a rigorous statement of our main results.

\medskip

\subsubsection*{Collisional weak solutions.} 

\;\newline 

\noindent Formally, integrating by parts twice in $\R^6$ in the collisional form of $Q_{\rm KS}(f)$ and symmetrizing in $v$ and $w$ yields an expression for the pairing of $Q_{\rm KS}(f)$ with a test function $\varphi \in C^\infty_c(\R^3)$:
\begin{equation}\label{eq:collisional_weak_form}
\begin{aligned}
    \int_{\R^3} \varphi(v) Q_{\rm KS}(f)(v) \dd v &= -\frac{1}{4\pi}\int_{\R^3}\int_{\R^3} \nabla\varphi(v) \cdot \frac{(\nabla_v - \nabla_w)(f(v)f(w))}{\abs{v-w}} \dd v \dd w\\
        &= -\frac{1}{8\pi}\int_{\R^3}\int_{\R^3} \left[\nabla_v\varphi - \nabla_w\varphi\right] \cdot \frac{(\nabla_v - \nabla_w)(f(v)f(w))}{\abs{v-w}} \dd v \dd w\\
        &= \frac{1}{8\pi}\int_{\R^3}\int_{\R^3} f(v)f(w)(\nabla_v - \nabla_w)\cdot  \left(\frac{\nabla_v\varphi - \nabla_w\varphi}{\abs{v-w}}\right) \dd v \dd w.
\end{aligned}
\end{equation}
This formulation removes the need for regularity of $f$, but still requires sufficient integrability for the convergence of the integrals on the right-hand side. A sufficient condition is $f\in \dot{H}^{-1}(\R^3)$:
\begin{equation}\label{eq:weak:65}
\begin{aligned}
   &\frac{1}{8\pi}\abs{\int_{\R^3}\int_{\R^3} f(v)f(w)(\nabla_v - \nabla_w)\cdot  \left(\frac{\nabla_v\varphi - \nabla_w\varphi}{\abs{v-w}}\right) \dd v \dd w}\\
    &\qquad\qquad\qquad\qquad\qquad\qquad\qquad\le C(\varphi) \int_{\R^3}\int_{\R^3} \frac{f(v)f(w)}{\abs{v-w}} \dd v \dd w = C(\varphi)\norm{f}_{\dot{H}^{-1}(\R^3)}^2.
\end{aligned}
\end{equation}
Thus, $Q_{\rm KS}(f)$ is well-defined as a distribution, or more precisely as an element of $W^{-2,1}(\R^3)$, whenever $f\in \dot{H}^{-1}(\R^3)$.

\begin{definition}\label{def:weak_soln}
    We say a nonnegative $f\in L^{\sfrac 65}(\R^3)$ is a \textnormal{stationary collisional weak solution} to Krieger--Strain if $Q_{\rm KS}(f) = 0$ in the sense of distributions, where $Q_{\rm KS}(f)$ is defined distributionally via \eqref{eq:collisional_weak_form}.
\end{definition}

\begin{remark}
    To obtain the weak formulation \eqref{eq:collisional_weak_form} we merely integrated by parts and performed a change of variables using Fubini's theorem. For $f$ sufficiently regular and localized, it follows that $f$ is a stationary weak solution if and only if it is a classical stationary solution defined by any of the formulas in \eqref{defn:collision_operator_smooth}.
\end{remark}

We will present the rigidity result for collisional weak solutions in Theorem \ref{prop:rigidity} below.

\medskip

\subsubsection*{Mollifier-confluent solutions.}

\,\newline

\noindent For a general nonnegative $L^1(\R^3)$ density, the $\dot{H}^{-1}$ seminorm may be infinite, precluding the use of the weak formulation \eqref{eq:collisional_weak_form} to define $Q_{\rm KS}$. Instead, we introduce a new notion of solution based on regularization by mollification and passage to the limit. We say that $\rho \in \mathcal{S}(\R^3)$ is an admissible mollifier if $\rho \ge 0$ and $\int_{\R^3} \rho = 1$. 
As usual for $\eps > 0$, define mollification to scale $\eps$ via
\begin{equation*}
\rho_\eps(v)\coloneqq \frac{1}{\eps^{3}}\rho\left(\frac{v}{\eps}\right)\qquad \text{and} \qquad f_\eps^\rho\coloneqq f\ast\rho_\eps\, .
\end{equation*}
The superscript $\rho$ emphasizes that ``mollification to scale $\epsilon$'' depends on a choice of kernel; the definition of solution below requires consistency across all admissible mollifiers $\rho$.

\begin{definition}\label{def:moll:conf}
    We say a nonnegative $f\in L^1(\R^3)$ is a \textnormal{stationary mollifier-confluent solution} to Krieger--Strain if 
    \begin{equation*}
        \lim_{\eps \to 0^+} Q_{\rm KS}\left(f_{\eps}^\rho\right) = 0 \qquad \text{in the sense of distributions, for every admissible mollifier }\rho.
    \end{equation*}
     Here $Q_{\rm KS}\left(f_\eps^\rho\right)$ is defined unambiguously via \eqref{defn:collision_operator_smooth}.
\end{definition}

The ``confluence'' in mollifier-confluent refers to the independence of the notion of solution with respect to the mollification process.  It is likely possible to construct a pathological nonnegative function $f$ for which $Q_{\rm KS}(f^\rho_\epsilon) \rightarrow 0$ as $\epsilon \rightarrow 0^+$ for some admissible mollifiers $\rho$, but not for others.\footnote{At a heuristic level, possible inconsistencies could arise between two mollifiers with effective spatial supports of very different diameters, or equivalently (by the uncertainty principle) between two mollifiers with effective frequency supports of very different diameters.  Mollifiers with spatial supports of larger diameters can detect possible cancellations at larger scales, which could in principle be invisible to mollifiers with narrower spatial supports.}  Imposing consistency across all admissible mollifiers allows for a canonical interpretation of $Q_{\rm KS}(f)$ for those rough densities $f$ which satisfy Definition~\ref{def:moll:conf}.  Our definition has precedents in parts of the PDE literature, such as in work of Christ~\cite{Christ1, Christ2} on generalized solutions to the nonlinear Schr\"odinger and Navier--Stokes equations.  In particular, Christ points out that general theories of multiplication for distributions do exist but depend on the approximation procedure~\cite[pg. 3]{Christ1}.  Similar notions of solutions were also employed by Lemarié--Rieusset~\cite{LR} for the 2D Navier--Stokes equations, the first and third authors~\cite{ABGN} for the 2D Navier--Stokes equations, the third author~\cite{GMPR} for the KdV equation, and Cheskidov and Hou~\cite{CH} for the $d$--dimensional fractional Navier--Stokes equations.

We now prove that mollifier-confluent solutions generalize collisional weak solutions, in the sense that the collisional weak form and mollifier-confluent form are equivalent for densities in $L^1(\R^3) \cap L^{\sfrac 65}(\R^3)$. 
For convenience, we introduce an auxiliary symmetric, bilinear operator $Q(g,h)$, defined distributionally via
\begin{equation*}
    \left\langle Q(g,h), \varphi \right\rangle \coloneqq \frac{1}{8\pi} \int_{\R^3} \int_{\R^3} g(v) h(w) (\nabla_v - \nabla_w) \cdot \left( \frac{\nabla_v \varphi -\nabla_w \varphi}{|v-w|} \right) \dd v \dd w.
\end{equation*}
The main tool is a simple bilinear estimate for $Q(g,h)$:
\begin{equation}\label{eq:Q:symm}
    \abs{\left\langle Q(g,h), \varphi \right\rangle}  \lesssim \norm{g}_{L^{\sfrac65}}\norm{h}_{L^{\sfrac65}}\norm{\nabla^2\varphi}_{L^\infty},
\end{equation}
which is a consequence of the Hardy--Littlewood--Sobolev inequality. Now, for any $f \in L^1(\R^3) \cap L^{\sfrac65}(\R^3)$ and any admissible mollifier $\rho$,  bilinearity of $Q$ implies the identity
\begin{equation}\label{eq:something}
\begin{aligned}
    Q_{\rm KS}\left(f_\epsilon^\rho\right) &= Q\left(f_\epsilon^\rho, f_\epsilon^\rho\right)\\
        &= Q\left(f_\epsilon^\rho-f, f_\epsilon^\rho\right) + Q\left(f, f_\epsilon^\rho-f\right) + Q(f, f).
\end{aligned}
\end{equation}
Since $f \in L^{\sfrac65}(\R^3)$, a standard property of mollifiers implies $f^\rho_\eps \to f$ strongly in $L^{\sfrac65}(\R^3)$. Therefore, combining the estimate \eqref{eq:Q:symm} and identity \eqref{eq:something}, we obtain
\begin{equation}
    \lim_{\eps \to 0^+} Q_{\rm KS}\left(f_\epsilon^\rho\right) = Q_{\rm KS}(f) \qquad \text{in the sense of distributions.}
\end{equation}
We conclude that $f$ is a collisional weak solution if and only if $f$ is a mollifier-confluent solution.
The main result for mollifier-confluent solutions is flexibility in Theorem \ref{thm:main} below. 

\subsection{Main results and commentary}

We first present the rigidity theorem for collisional weak solutions.

\begin{theorem}[\textbf{Rigidity}]\label{prop:rigidity}
Suppose $f$ is any nonnegative, stationary collisional weak solution for Krieger--Strain in the sense of Definition~\ref{def:weak_soln}. Then $f \equiv 0$.
\end{theorem}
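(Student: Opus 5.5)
The plan is to test the weak formulation \eqref{eq:collisional_weak_form} against an approximation of the kinetic-energy weight $\varphi(v)=\abs{v}^2$. For this weight the collisional kernel turns out to be a positive multiple of $\abs{v-w}^{-1}$, so the equation forces the $\dot{H}^{-1}$ norm of $f$ to vanish. This is the weak-solution version of the fact, recorded in Lemma~\ref{lem:conservation_laws}, that kinetic energy is strictly increasing, with production proportional to $\int\!\!\int f(v)f(w)\abs{v-w}^{-1}$. A stationary solution can have no energy production, and for $f\ge 0$ that means $f\equiv 0$.

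\textbf{Step 1: compute the kernel.} For a general $\varphi$, write $x=v-w$. Expanding the divergence gives
\begin{equation*}
K_\varphi(v,w)\coloneqq(\nabla_v-\nabla_w)\cdot\Big(\frac{\nabla\varphi(v)-\nabla\varphi(w)}{\abs{v-w}}\Big)
=\frac{\Delta\varphi(v)+\Delta\varphi(w)}{\abs{v-w}}-\frac{2\,(\nabla\varphi(v)-\nabla\varphi(w))\cdot(v-w)}{\abs{v-w}^3}.
\end{equation*}
The mean value theorem then yields $\abs{K_\varphi(v,w)}\le 8\norm{\nabla^2\varphi}_{L^\infty}\abs{v-w}^{-1}$, which is the bound behind \eqref{eq:weak:65}. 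Two features matter below. First, this bound needs only $\nabla^2\varphi\in L^\infty$, not compact support. Second, $K_\varphi(v,w)$ depends only on the second-order jets of $\varphi$ at the two points $v$ and $w$. For $\varphi=\abs{v}^2$ we have $\Delta\varphi=6$ and $\nabla\varphi(v)-\nabla\varphi(w)=2(v-w)$, so $K=(12-4)/\abs{v-w}=8/\abs{v-w}$.

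\textbf{Step 2: truncate and pass to the limit.} Fix a cutoff $\chi\in C_c^\infty$ with $\chi\equiv1$ on $B_1$, and set $\varphi_R(v)=\abs{v}^2\chi(v/R)$. Then $\varphi_R\in C_c^\infty$, and $\norm{\nabla^2\varphi_R}_{L^\infty}\le C$ uniformly in $R$, since each derivative falling on $\chi(\cdot/R)$ gains a factor $R^{-1}$ that is offset by at most one power of $\abs{v}\lesssim R$. Because $f$ is a stationary collisional weak solution,
\begin{equation*}
0=\frac{1}{8\pi}\int\!\!\int f(v)f(w)K_{\varphi_R}(v,w)\dd v\dd w\qquad\text{for every }R.
\end{equation*}
For fixed $(v,w)$, both points lie in $B_R$ once $R$ is large, so $K_{\varphi_R}(v,w)=8/\abs{v-w}$ by Step 1. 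The integrands are dominated by $C f(v)f(w)\abs{v-w}^{-1}$, which is integrable because Hardy--Littlewood--Sobolev applies to $f\in L^{\sfrac65}$. Dominated convergence therefore gives
\begin{equation*}
0=\frac1\pi\int\!\!\int\frac{f(v)f(w)}{\abs{v-w}}\dd v\dd w.
\end{equation*}
Since $f\ge0$, the integrand is nonnegative, so $f(v)f(w)=0$ for a.e.\ $(v,w)$, and Fubini gives $f=0$ a.e.

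\textbf{Main obstacle.} I expect the only delicate point to be the justification of Step 2: the uniform $L^\infty$ bound on $\nabla^2\varphi_R$, combined with the observation that $K_\varphi$ is controlled purely by $\nabla^2\varphi$. This is exactly what allows an unbounded, non-compactly-supported weight to enter through a limit of admissible test functions, and it uses the integrability $f\in L^{\sfrac65}$ from Definition~\ref{def:weak_soln} in an essential way. No moment assumption on $f$ is needed, because the growth of $\abs{v}^2$ never appears in the kernel; only its bounded Hessian does.
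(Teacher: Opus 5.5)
Your proposal is correct and follows essentially the same route as the paper. You test the collisional weak form with $\varphi_R(v)=\chi(v/R)\abs{v}^2$, use the $R$-uniform bound on $\nabla^2\varphi_R$ to dominate the integrand by $C f(v)f(w)\abs{v-w}^{-1}$ (integrable by Hardy--Littlewood--Sobolev for $f\in L^{\sfrac65}$), and pass to the limit to obtain $\frac1\pi\int\!\!\int f(v)f(w)\abs{v-w}^{-1}\dd v\dd w=0$; your kernel value $K=8/\abs{v-w}$, i.e.\ $1/(\pi\abs{v-w})$ after the $\frac{1}{8\pi}$ normalization, matches the paper's computation exactly.
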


\noindent The proof of rigidity is based on the failure of conservation of kinetic energy for the isotropic Landau equation, and consequently does not apply to the Landau equation. For the Landau equation, the analogous proof uses the entropy/entropy dissipation identity that requires greater regularity of the solutions.

In contrast to the above rigidity result, we also prove the following flexibility result.

\begin{theorem}[\textbf{Flexibility}]\label{thm:main}
Let $1<p_0<\sfrac 65$ be fixed. There exist nontrivial, nonnegative functions
$$
f \in L^{p_0}(\R^3, dv) \cap \bigcap_{m=0}^\infty L^1(\R^3, \exp(\langle v \rangle^m)\, dv)
$$
which are mollifier-confluent solutions of the stationary Krieger--Strain equation in the sense of Definition~\ref{def:moll:conf}.
\end{theorem}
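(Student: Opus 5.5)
The construction is a Nash iteration. We build a sequence of smooth, nonnegative, rapidly decaying densities $f_q$ and smooth "stresses" $R_q$ that solve a relaxed stationary problem
\begin{equation}\notag
Q_{\rm KS}(f_q) = \nabla\cdot\nabla\cdot R_q + \nabla\cdot G_q ,
\end{equation}
and we drive the error to zero in a weak topology while $f_q$ converges in $L^1\cap L^{p_0}$. The high--high--low cancellation enters through the nondivergence form $a[f]\Delta f + f^2$. Write $f_{q+1}=f_q+w_{q+1}$, where $w_{q+1}$ is a sum of highly oscillating profiles at frequency $\lambda_{q+1}$ with slowly varying amplitudes, localized in velocity. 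Then $a[w_{q+1}]\approx \lambda_{q+1}^{-2}w_{q+1}$ to leading order, since $(-\Delta)^{-1}$ acts as multiplication by $\lambda^{-2}$ on a single frequency shell. Also $\Delta w_{q+1}\approx -\lambda_{q+1}^2 w_{q+1}$. So the self-interaction $a[w]\Delta w + w^2$ cancels at leading order, and what survives is a low-frequency term. To get that low-frequency term to cancel $R_q$, I would use oscillations of the form $w=\sum_k a_k(v)\cos(\lambda_{q+1} k\cdot v)$ with distinct directions $k$. Their cross-interactions and the amplitude-gradient terms produce a quadratic, low-frequency contribution of the form $\nabla\nabla:(\sum_k a_k^2 \,k\otimes k)$ plus lower-order terms. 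A geometric decomposition lemma for symmetric matrices then lets us choose $a_k^2$ so that this contribution equals $-R_q$ plus a positive-definite multiple of the identity. Nonnegativity of $f_{q+1}$ is arranged by putting a nonnegative mean part $\sum_k a_k^2$ or a bump under the oscillation, so that $f_q+w_{q+1}\ge0$. That mean part must be absorbed by $f_q$ being bounded below on the support, with exponential tails handling the whole space.

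The key steps, in order, are as follows.
\begin{enumerate}
\item Choose the starting pair $(f_0,R_0)$. Take $f_0$ to be a Gaussian-type, $\exp(\langle v\rangle^m)$-localized bump, and define $R_0$ by inverting the double divergence of $Q_{\rm KS}(f_0)$. This needs $Q_{\rm KS}(f_0)$ to have vanishing zeroth and first moments. Mass and momentum are conserved, but the failure of energy conservation (Lemma~\ref{lem:conservation_laws}) means a bounded-support $R$ cannot be arranged exactly. So I would allow a divergence-form flux $G_q$ and use a weighted inverse divergence with Gaussian decay to keep localization.
\item Set parameters: $\lambda_q=\lambda_0^{b^q}$, with stress size $\delta_{q}$ and amplitude $\|w_{q+1}\|_{L^1}\lesssim \delta_{q}^{1/2}$. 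The $L^{p_0}$ norm of $w_{q+1}$ scales like $\delta_q^{1/2}\mu_{q+1}^{3(1-1/p_0)}$, where $\mu$ is the concentration scale. Concentrating the oscillations inside small balls (intermittency) lets the $L^{p_0}$ sizes be summable, while the error terms scale in $L^{6/5}$-critical quantities that we never need to control. This is exactly where $p_0<6/5$ is used.
\item Estimate the new stress. It has three parts. The transport/linear error $a[f_q]\Delta w + a[w]\Delta f_q + 2f_q w$ is of size $\lambda^{-1}$ times the amplitude after an inverse divergence. The nonlocal commutator comes from $a[\,\cdot\,]$ not being exactly multiplication by $\lambda^{-2}$ on modulated profiles, and I would handle it by a stationary-phase expansion of the Fourier multiplier $|\xi|^{-2}$ near $\lambda k$. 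The oscillation error is high-frequency and is inverted by the double divergence.
\item Pass to the limit. The sequence $f_q\to f$ converges in $L^1(e^{\langle v\rangle^m})\cap L^{p_0}$ because the supports of the $w_{q+1}$ stay in a fixed Gaussian envelope. Finally, verify mollifier confluence: split $Q_{\rm KS}(f^\rho_\eps)-Q_{\rm KS}(f_q)$ with $q$ chosen so that $\lambda_q\le\eps^{-1}<\lambda_{q+1}$. Because the mollifier kills the frequencies $\lambda_{q'}$ for $q'>q$ up to Schwartz tails uniformly in $\rho$, we get $Q_{\rm KS}(f^\rho_\eps)\approx Q_{\rm KS}(f_q)+O(\text{partial }w_{q+1})$. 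The partially mollified $w_{q+1}$ term has the same high--high--low cancellation up to an error controlled by $\delta_q$, for every admissible $\rho$.
\end{enumerate}

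I expect the main obstacle to be the nonlocality in steps 3 and 4. The coefficient $a[w]$ of an intermittent, spatially localized oscillation has a $|v|^{-1}$ tail that is not small in $\lambda$ relative to the mean, and its interaction with $\Delta f_q$ on the whole space is genuinely low-frequency. My first attempt would be to build $w$ with vanishing zeroth and first moments, for instance as $\Delta^2$ of a compactly supported function. Then $a[w]$ is compactly supported up to $\Delta$ of something, and the nonlocal remainder becomes a local, $\lambda^{-2}$-small term that is absorbed into the next stress.
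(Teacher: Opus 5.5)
Your leading-order computation is right: Lemma~\ref{lem:remainder_est_take2} makes it precise, and the diagonal interactions of a wave at frequency $\lambda k$ leave, up to constants, $\lambda^{-2}\nabla\nabla:(a^2\,k\otimes k)$. There is, however, a genuine gap at the cancellation step. The output is the double divergence of a positive semidefinite tensor, so you add ``a positive-definite multiple of the identity''. But Krieger--Strain has no pressure to absorb it. The leftover $\nabla\nabla:(\rho\,\Id)=\Delta\rho$ is a new error, and $\rho$ must dominate the largest eigenvalue of $R_q$ pointwise. Since the amplitudes must be localized, $\rho$ cannot be a global constant. So $\rho\,\Id$ is never smaller than $R_q$ in any norm monotone in $|R_q|$, and your iteration does not close. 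Equivalently, since $\nabla\nabla:R=\Delta(-\mathcal R_i\mathcal R_jR_{ij})$, only the negative part of a scalar error can be cancelled.

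The missing idea is to take $\rho$ to be a large constant on a huge ball and to measure errors in a seminorm blind to constants. In the paper $Q_{\rm KS}(f)=\pa_{ii}B(f,f)$ exactly (Proposition~\ref{prop:reform}), so the error is a scalar $E_q$. The choice $a_{q+1}^2=\chi_{R_{q+1}}^2(2\|E_q\|_{L^\infty}-E_q)$ gives $E_q+a_{q+1}^2=(1-\chi_{R_{q+1}}^2)E_q+2\|E_q\|_{L^\infty}\chi_{R_{q+1}}^2$, and the last term is $O(R_{q+1}^{-1/4})$ in $\dot W^{1,4}$. This is why the error is measured in $X=H^{-100\delta^{-1}}+\dot W^{1,4}$; convergence in $X$ suffices by Lemma~\ref{lem:x:con}.

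This also changes your bookkeeping. The amplitudes are of size $\|E_q\|_{L^\infty}^{1/2}$, which is not controlled, rather than $\delta_q^{1/2}$. All smallness of $g_{q+1}$ in $L^1\cap L^{p_0}$ comes from intermittency, with $\Lambda_{q+1}$ chosen after $\lambda_{q+1}$. Likewise, in the confluence step the partially mollified block does not enjoy ``the same cancellation''. Mollification replaces $-\delta_{ij}$ by a $\rho$-dependent matrix $C_{ij}(\rho,\Lambda,\delta)$ in Proposition~\ref{prop:hhl}. The term is small only because $\|\mathcal R_i\mathcal R_j[(\bp_{\le\Lambda_q^{\delta'}}a_q)^2]\|_X\lesssim 2^{-q}$, i.e.\ $a_q^2$ is nearly constant.

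The second gap is positivity and the choice of building block. By \eqref{eq:weak:65} the output is controlled by $\|w\|_{\dot H^{-1}}^2$, so $w$ needs a non-negligible $\dot H^{-1}$ norm while $\|w\|_{L^1}$ stays summable.
\begin{itemize}
\item For a cosine-type oscillation the negative part is comparable to the positive part. Absorbing it into $f_q$ forces $|w|\lesssim\|f_q\|_{L^\infty}$, and then $\|w\|_{\dot H^{-1}}\lesssim\lambda^{-1}(\|f_q\|_{L^\infty}\|w\|_{L^1})^{1/2}\to0$.
\item Putting a nonnegative mean under the cosines instead makes the mean as large as the oscillation, which destroys the $L^1$ smallness.
\end{itemize}
The increment must itself be nonnegative, with mean tiny compared to its $\dot H^{-1}$ size. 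In $\R^3$ this essentially requires point-like concentration: sheets have $\dot H^{-1}$ norm dominated by the lowest frequency, and tubes gain only a logarithm. This is the paper's $U_\Lambda$, a periodized bump with $\|U_\Lambda\|_{L^1(\T^3)}\lesssim\Lambda^{\frac{3\delta-1}{2}}$, normalized by $\int_{\T^3}(-\Delta)^{-1}_{\T^3}\nabla U_\Lambda\otimes(-\Delta)^{-1}_{\T^3}\nabla U_\Lambda=\frac12\Id$. Its Fourier support is a full three-dimensional lattice, so independent amplitudes per direction, and hence your geometric lemma, are unavailable. Isotropy replaces them and yields $B(aU_\Lambda,aU_\Lambda)=a^2+E$.

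The correct normalization is therefore at the $\dot H^{-1}$ level. There $\|U_\Lambda\|_{L^p}\lesssim\Lambda^{1+3(1-\delta)(\frac12-\frac1p)}$, which is small exactly when $p<\frac65$ and $\delta$ is small. An $L^1$-normalized profile, as in your step 2, instead grows in $L^{p_0}$ as it concentrates.

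Finally, your fix for the $|v|^{-1}$ tail of $a[w]$ (vanishing moments, e.g.\ $w=\Delta^2\phi$) is incompatible with $w\ge0$, since a nonnegative function of zero mass vanishes. The paper instead bounds such interactions by the small $L^1$ and $L^{1+\gamma}$ norms of the increment, e.g.\ $\|B(f_q,g_{q+1})\|_{H^{-10}}\lesssim\|g_{q+1}\|_{L^1}\|f_q\|_{W^{5,1}}$.
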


\noindent At the moment, we do not explore in detail possible connections between our notions of solution---Definition~\ref{def:weak_soln} and Definition~\ref{def:moll:conf}---and other specialized notions of low-regularity solutions tailored to the Landau and Boltzmann equations, such as $H$-solutions or renormalized solutions. We view Theorem~\ref{thm:main} as the first step in a program to clarify the properties of and rigidity/flexibility thresholds of solutions to kinetic equations.

In the kinetic literature, weak formulations analogous to Definition~\ref{def:weak_soln} are relatively standard for homogeneous equations. In the spatially inhomogeneous setting, however, the natural conservation laws and entropy inequality generally provide comparatively weaker a priori estimates that are not strong enough to construct distributional solutions as in Definition~\ref{def:weak_soln}.
Instead, the standard notion of ``global weak solutions'' is built around the renormalized solutions introduced by DiPerna and Lions for the inhomogeneous Boltzmann equation~\cite{DPL}; for Landau, see~\cite{Lions94,Villani96}. Renormalization yields a formulation compatible with the available a priori bounds, at the cost of imposing a substantially weaker notion of solution and, in some settings, allowing a defect measure.

We emphasize that the uniqueness of renormalized solutions remains an longstanding open problem of some importance. Renormalized solutions underlie the formal derivations of many models in kinetic theory, including grazing collision limits from Boltzmann to Landau and hydrodynamic limits from kinetic equations to fluid equations. However, due to the variety of pathologies permissible in the renormalized formulation, we suspect that this problem can be resolved in the negative by further development of our techniques. 

We therefore formulate the following conjecture.

\begin{theorem-non}[\textbf{Flexibility of Renormalized Solutions}]
\textit{There exist nonnegative, steady, renormalized solutions to the Landau and Boltzmann equations with finite mass, momentum, energy, and entropy, but which are not Maxwellians.  In addition, renormalized solutions to the Cauchy problem for the inhomogeneous Landau and Boltzmann equations are not unique for a dense class of initial data.}
\end{theorem-non}

We also view Theorem~\ref{thm:main} as indicative of the instability of solutions to the full nonlinear problem in $L^\infty(\R^+;L^{\sfrac 65-})$.  It seems plausible that our techniques will lead to the impossibility of certain conjectured smoothing estimates for the Krieger--Strain equation, and likely for Landau as well. For example, this may also rule out conjectured enhanced dissipation estimates (see work of Cabrera, Gualdani, Guillen~\cite{CGG}) as a method toward uniqueness. We leave investigation of these matters to future work.   

\subsection{A quick outline of the proof}
The proof of rigidity uses the formal identity 
$$ \frac{d}{dt} \int_{\R^3} f(v) |v|^2 \, dv = 4 \| f \|_{\dot H^{-1}(\R^3)}^2 \, , $$
observed for example in the survey of Gualdani and Zamponi~\cite{GZSurvey}.  For stationary solutions, the left--hand side must be zero, and thus the right--hand side must be zero, implying that the solution is trivial.  Justifying this formal argument requires $L^{\sfrac 65}(\R^3)$ integrability, since $L^{\sfrac 65}(\R^3) \subset \dot H^{-1}(\R^3)$.  

Since rigidity holds in $L^{\sfrac 65}(\R^3)$, which from~\eqref{eq:weak:65} is the minimum amount of integrability required to interpret the collision operator by classical methods, any construction of nontrivial stationary solutions must provide a canonical way to interpret the collision operator which is more subtle than H\"older's inequality.  The variant of Nash iteration introduced by the first and third authors in~\cite{ABGN} fulfills this need.  The basic principle of this variant is that the constructive nature of the iteration provides a way to pass to the limit in possibly ill--defined product terms.  This can be illustrated in the following simple example.  Consider the $\T$--periodic distribution $u = \sum_{k \geq 0} \sin(2\pi 2^k x)$.  It is clear that $u \notin L^2(\T)$, but $u \in H^{-1}(\T)$.  Therefore it is not obvious how to interpret $(u^2)'$ as a distribution on $\T$.  However, for any partial sum, 
\begin{align*}
    \left[ \left( \sum_{k \leq K} \sin(2\pi 2^k x) \right)^2 \right]' &= \left( \sum_{\substack{k \neq k' \\ k, k' \leq K}} \sin(2\pi 2^kx) \sin(2\pi 2^{k'}x) + \sum_{k \leq K} \frac{1-\cos(2\pi 2^{k+1}x)}{2}  \right)' \, .
\end{align*}
The sum over $k \neq k'$ converges in $H^{-1}(\T)$ as $K \rightarrow \infty$, since each term gains $\approx 2^{-\max(k,k')}$ from the $H^{-1}(\T)$ weight, and a short computation shows that $\sum_{k \neq k'}2^{-\max(k,k')}$ is absolutely summable over $k$ and $k'$.  In addition, the term $\sum_{k \leq K} \frac{-\cos(2\pi2^{k+1}x)}{2}$ converges in $H^{-1}(\T)$ as $K \rightarrow \infty$ as well.  The only problematic term is $\sum_{k \leq K} \sfrac 12$, which is not absolutely summable.  However, this sum is \textit{inside of a derivative}, implying that it vanishes for every $K$, and thus in the limit.  

In the context of the Krieger--Strain equation, and our iteration, we can make the following analogies. The equation $(u^2)'=0$ is replaced with $Q_{\rm KS}(f) := \pa_{ii} B(f,f)=0$, where $B$ is a quadratic, nonlocal integro--differential operator which will be defined explicitly in Section \ref{sec:reform_nonlin}.  Interpreting the operator by H\"older's inequality in this case requires $L^{\sfrac 65}(\R^3)$ regularity from~\eqref{eq:weak:65}, in contrast to $L^2(\T)$ regularity required to interpret the square.  Partial Fourier sums are replaced with mollifications with nonnegative Schwartz functions.  The differential operator on the outside of the complicated bilinear operator still annihilates any constant modes which grow as the mollification parameter approaches zero.  This annihilation ``renormalizes'' the collision operator.  The simple wave $\sin(2\pi 2^k x)$ is replaced with $g_k = a_k U_k$, where $a_k$ has relatively low--frequencies, and $U_k$ has relatively high frequencies, unit $L^{\sfrac 65}$ norm, but very small $L^1$ norm.  The terms $B(g_k,  g_{k'})$ for $k \neq k'$ converge for essentially the same reason as in the toy problem: they are well--separated in frequency, and therefore summable in negative Sobolev spaces.  However, the terms $B(g_k, g_k)$ will produce a ``high--high--low'' interaction, visible in the toy example through the trigonometric identity
$$ \sin^2(2\pi 2^kx) = \frac{1-\cos(2\pi2^{k+1}x)}{2} = \underbrace{\dashint_{\T} \sin^2(2\pi 2^ky) \, dy}_{ = \bp_{\rm low} \sin^2(2\pi 2^k x)} + \underbrace{\left( \Id - \dashint_\T \right) \sin^2(2\pi 2^kx)}_{{ = \bp_{\rm high} \sin^2(2\pi 2^k x)}}  \, . $$
The ``high--high--low'' term is the first term, and is so named because the multiplication of two high frequencies creates a low frequency, namely the mean, and leftover high frequencies.  We now further examine the high--high--low terms produced by $\sum_{k \leq K} B(g_k, g_k)$ and explain that the reason for their convergence as $K \rightarrow \infty$ is intimately connected to the fundamental cancellation mechanism behind all Nash iterations.  

In our iteration, we suppose that $f_K  = \sum_{k \leq K} g_k$ solves 
$$ \pa_{ii} B(f_K, f_K) = \pa_{ii} E_K \, . $$
We wish to construct $f_{K+1} = f_K + g_{K+1}$ in such a way so that $\|E_{K+1}\| \ll \| E_K \|$, in a norm which combines a negative Sobolev norm with a homogeneous first--order Sobolev norm.  Then if $f_K \rightarrow f$ and $E_K \rightarrow 0$, we can hope to find a solution in the limit.  Towards this end, we design $g_{K+1} = a_{K+1} U_{K+1}$ so that
\begin{equation}\label{eq:hhl:intro}
\bp_{\rm low} B(g_{K+1} , g_{K+1}) = -E_K \, . 
\end{equation}
Then
\begin{equation}\label{eq:cancellation}
\begin{split}
\pa_{ii} \left( E_K + B(g_{K+1} , g_{K+1}) \right) &= \pa_{ii} \left( E_K + \bp_{\rm low} B(g_{K+1} , g_{K+1}) +  \bp_{\rm high} B(g_{K+1}, g_{K+1}) \right)  \\
&= \pa_{ii} \left( \bp_{\rm high} B(g_{K+1}, g_{K+1})  \right)  \, .  
\end{split}
\end{equation}
The remaining term is high--frequency and can be made very small in negative Sobolev spaces by choosing the frequencies of $U_{K+1}$ to be very large.  This is the ``cancellation mechanism'' which drives all Nash iterations.   Then we can make an analogy between $\sum_{k \leq K} \sfrac 12$, the ``high--high--low'' term in the toy problem, and the ``high--high--low'' terms
$$ \sum_{k} \bp_{\rm low} B(g_{k+1}, g_{k+1}) = \sum_k -E_{k} $$
which appear in our iteration: in our setting, $-E_k$ essentially includes a component which is small in a negative Sobolev norm, and a large constant which is annihilated by the homogeneous first--order Sobolev norm.  The convergence of the high--high--low terms follows simply from the fact that Nash iteration \textit{prescribes} the high--high--low terms, so as to obtain a solution of the nonlinear PDE in the limit.  Convergence in the nonlinearity is then equivalent to summability of the error terms $E_k$ along the stages of the iteration.

There are several difficulties in carrying out the iteration described above.
First, designing $g_{K+1}$ so that~\eqref{eq:hhl:intro} holds requires a detailed analysis of the rather complicated operator $B(g,g)$.  Our strategy for this analysis is to rewrite $B(g,g)$ as a bilinear Fourier integral operator and use the frequency properties of $g_{K+1}$ to simplify and analyze the leading order behavior of the multiplier; see Proposition~\ref{prop:hhl} and its proof. The second main difficulty is in proving that
\begin{equation}\label{eq:full:limit}
\lim_{K \rightarrow \infty} \pa_{ii} B(f_K, f_K) = \lim_{K \rightarrow \infty} \pa_{ii} E_K = 0 \qquad \implies \qquad \lim_{\epsilon \rightarrow 0} \pa_{ii} B(f^\rho_\epsilon, f^\rho_\epsilon) = 0
\end{equation}
for all admissible mollifications $f^\rho_\epsilon$ of $f$.  In the work~\cite{ABGN} of the first and third authors, this difficulty was avoided by adopting a slightly weaker notion of solution involving a dyadic paraproduct frequency decomposition; the same definition was later adopted by Cheskidov and Hou~\cite{CH}.  The upshot of using a dyadic frequency decomposition is that $\lim_{\epsilon\rightarrow 0} \pa_{ii} B(f_\epsilon, f_{\epsilon})$ must only be computed along the dyadic subsequence of $\epsilon$'s given by $\epsilon_k = 2^{-k}$.  The side effect is that the convergence could in principle depend on the choice of Littlewood--Paley cutoff, and the fact that only dyadic subsequences are considered.  The convergence in the Nash iteration outlined in~\cite{ABGN} does not in fact depend on taking the limit along a dyadic subsequence, although it was not proven there.   Proving the analogous statement in the setting of Krieger--Strain requires a fair amount of detailed analysis; see Subsection~\ref{ss:limit}.  Since~\eqref{eq:full:limit} is not a statement which holds in general (in other words, convergence along a subsequence does not imply convergence along the whole sequence), proving it requires detailed properties of $f = \sum_k g_k$.  We address this through extensive inductive assumptions on the $g_k$'s and their properties; see Subsection~\ref{ss:ind}, which contains the inductive hypotheses.

\subsection{Further literature and background}


The theory of weak solutions to kinetic equations begins with the foundational work of DiPerna and Lions~\cite{DPL}, which introduced renormalized solutions to the Boltzmann equation and established global existence from initial data with finite mass, energy, and entropy.  Lions subsequently extended aspects of the theory to the Landau equation~\cite{Lions94}.  In the same vein, Villani introduced and proved global existence of renormalized solutions with defect measure in~\cite{Villani96}, and introduced $H$-solutions in~\cite{VillaniWeak}; we refer to his survey~\cite{VillaniReview} for further discussion.  A separate direction of research has been the extension of the De Giorgi--Nash--Moser theory to hypoelliptic kinetic equations.   Golse and Vasseur obtained H\"older regularity for kinetic Fokker--Planck equations with rough diffusion coefficients~\cite{GolseVasseur}, and Golse, Imbert, Mouhot, and Vasseur subsequently established a Harnack inequality and applied it to the Landau equation~\cite{GolseImbertMouhotVasseur}.  For the non-cutoff Boltzmann equation, Imbert and Silvestre proved a weak Harnack inequality and H\"older estimates~\cite{ImbertSilvestreHarnack}, and later obtained global smooth a priori estimates conditional on bounds for the macroscopic quantities~\cite{ImbertSilvestreGlobal}.  For the spatially homogeneous Landau equation with Coulomb potential, a substantial regularity and well-posedness theory has also been developed near and above the scaling-critical space $L^{\sfrac 32}(\R^3)$~\cite{ChernGualdani,GoldingThesis,GoldingGualdaniLoherRegularization,GoldingGualdaniLoherCritical}.  Crucially, the Fisher--information methods described earlier in the introduction have led to global regularity results for Landau~\cite{GuillenSilvestre,Ji,DesvillettesGoldingGualdaniLoher}, with analogous results for the Krieger--Strain equation obtained in~\cite{BowmanJi}.  We refer to the surveys of Gualdani and Zamponi~\cite{GZSurvey} and Imbert and Silvestre~\cite{ImbertSilvestreSurvey}, and the references therein, for further discussion.  Despite this progress, uniqueness of renormalized solutions remains open, and, to our knowledge, prior to the present work no flexibility construction was known for a fixed Landau--type collision operator.
  
The techniques used in the proof of Theorem~\ref{thm:main} belong to the Nash iteration lineage, which begins with Nash's seminal work~\cite{Nash} on $C^1$ isometric embeddings.  These techniques were adapted to the incompressible Euler equations in the foundational papers of De Lellis and Sz\'ekelyhidi in~\cite{DLS09, DLS13}, and to the incompressible Navier--Stokes equations in the foundational paper of Buckmaster and Vicol~\cite{BV19}. They have since been used extensively for Euler, Navier--Stokes, active scalar, and other fluid equations~\cite{Isett,BDLSV,DaneriSzekelyhidi,BSV,NovackVicol,GKN23,GKNStrong,GKNHelicity,LuoStationary,CheskidovLuoSharp,BCV,GiriRadu,BrueColomboKumarEuler}, as well as for the transport and continuity equations~\cite{ModenaSzekelyhidi,CheskidovLuoTransport,BrueColomboDeLellis,BrueColomboKumarTransport} and magnetohydrodynamics~\cite{BeekieBuckmasterVicol,FaracoLindbergSzekelyhidi,FaracoLindbergSzekelyhidiHelicity,GiardiSzekelyhidi,EncisoPenafielPeraltaMHD}.  We refer to the surveys of De Lellis and Sz\'ekelyhidi~\cite{DeLellisSzekelyhidiSurvey} and Buckmaster and Vicol~\cite{BuckmasterVicolSurvey} and references therein for further discussion. However, there are no examples prior to the present work which utilize Nash iteration/convex integration to construct stationary solutions to a kinetic equation.  The closest relative of the present work might be L\"u~\cite{LuFokkerPlanck}, which constructs non-unique solutions to several nonlinear Fokker--Planck equations and their associated distribution--dependent stochastic differential equations by constructing an additional rough divergence--free drift field; in contrast, the collision operator in the present work is fixed and no auxiliary drift is added.

\section{Toolkit and cancellation mechanism}
\subsection{Fourier analysis}

\begin{definition}[\textbf{Projection onto and off the mean of periodic functions}]
    Let $u \in L^1(\T^3)$. Then we define
    $$
    \mathbb{P}_{=0}(u) = \int_{\T^3} u(x)\, dx
   \qquad \textnormal{and}
    \qquad  \mathbb{P}_{\not=0}(u) = u - \mathbb{P}_{=0}(u) \, . $$
\end{definition}

\begin{definition}[\textbf{Fourier transform and Fourier coefficients}]
    For $f \in \mathcal{S}(\R^3)$ we define the Fourier transform by
    $$
    \hat{f}(\xi) = \int_{\R^3} f(x)e^{-2\pi i \xi \cdot x}\, dx\, .
    $$
    This definition extends to $f \in \mathcal{S}'(\R^3)$ by duality. Likewise, for $u \in \mathcal{D}'(\T^3)$ we define Fourier coefficients by
    $$
    \hat{u}(\xi) = \langle u, e^{-2\pi i \xi \cdot x}\rangle_{\mathcal{D}'(\T^3), \mathcal{D}(\T^3)}\, .
    $$
    When $u \in L^1(\T^3)$ the above coincides with the classical definition of Fourier coefficients given by
    $$
    \hat{u}(\xi) = \int_{\T^3} u(x)e^{-2\pi i \xi \cdot x}\, dx\, .
    $$
    We will frequently identify functions and distributions on $\T^3$ with their $\Z^3$-periodic lifts to $\R^3$. Under the identification $\T^3 \simeq [0,1]^3$, the Fourier transform (in the sense of tempered distributions) of a $\Z^3$-periodic distribution coincides with its Fourier coefficients.
\end{definition}

\begin{remark}
    If $u \in L^1(\T^3)$ then $\mathbb{P}_{=0}(u) = \hat{u}(0)$. We will utilize this observation frequently.
\end{remark}

\begin{definition}[\textbf{Spatial and frequency cutoffs}]\label{def:projs}
    Let $\chi:\R^3 \to \R$ be smooth, radial, monotone decreasing in every radial direction, $0 \leq \chi \leq 1$, $\chi(x) = 1$ for $|x| < 1$, $\chi(x) = 0$ for $|x| > 2$, and $|\nabla \chi| \leq 2$. For $R> 0$ put $\chi_R(x) = \chi(2x/R)$.  In addition, define the convolution type operator $\mathbb{P}_{\leq R}$ by
    $$
    \left(\mathbb{P}_{\leq R} (f)\right)^{\wedge}(\xi) = \chi_R(\xi) \hat{f}(\xi)\, .
    $$
    We also put $\mathbb{P}_{>R}(f) = f - \mathbb{P}_{\leq R}(f)$.
\end{definition}

\begin{lemma}[\textbf{Bounds on low--frequency projectors}]\label{lem:proj_est}
    For $\lambda > 0$ we have that
    $$
    \Vert \mathbb{P}_{\leq \lambda} (f)\Vert_{L^p(\R^3)} \leq \Vert \hat{\chi} \Vert_{L^1(\R^3)} \Vert f \Vert_{L^p(\R^3)}\, .
    $$
\end{lemma}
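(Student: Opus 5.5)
The plan is to write $\mathbb{P}_{\leq \lambda}$ as convolution with a rescaled kernel and then apply Young's inequality. By Definition~\ref{def:projs}, the multiplier of $\mathbb{P}_{\le\lambda}$ is $\chi_\lambda(\xi) = \chi(2\xi/\lambda)$. Since $\chi$ is smooth and compactly supported, it lies in $\mathcal{S}(\R^3)$, and so does its inverse Fourier transform. Because $\chi$ is radial and real-valued, $\check\chi = \hat\chi$, so we will not need to distinguish between them. Scaling gives
\begin{equation*}
    \mathcal{F}^{-1}\bigl[\chi(2\,\cdot/\lambda)\bigr](x) = \left(\tfrac{\lambda}{2}\right)^3 \hat{\chi}\left(\tfrac{\lambda x}{2}\right) \eqqcolon K_\lambda(x).
\end{equation*}
Hence, for $f\in\mathcal{S}(\R^3)$ we have $\mathbb{P}_{\le\lambda} f = K_\lambda * f$, using the convolution theorem with the normalization $e^{-2\pi i \xi\cdot x}$ fixed earlier.

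The next step is the dilation invariance of the $L^1$ norm. A change of variables shows $\|K_\lambda\|_{L^1} = \|\hat\chi\|_{L^1}$, independently of $\lambda$. Young's convolution inequality $\|K*f\|_{L^p}\le \|K\|_{L^1}\|f\|_{L^p}$, valid for $1\le p\le\infty$, then gives the claimed bound for Schwartz $f$.

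Finally, we extend to general $f\in L^p(\R^3)$. For $p<\infty$ this follows by density of $\mathcal{S}$ in $L^p$: the map $f\mapsto K_\lambda*f$ is bounded on $L^p$, and the formula $\mathbb{P}_{\le\lambda}f = K_\lambda * f$ persists in $\mathcal{S}'$, since convolution with a Schwartz kernel corresponds to multiplication by the smooth, compactly supported $\chi_\lambda$ on the Fourier side. For $p=\infty$ the same distributional identity holds directly, because $K_\lambda\in L^1$ and $f\in L^\infty\subset\mathcal{S}'$.

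There is no genuine obstacle here. The only points requiring care are the scaling constant in $\chi_\lambda(\xi)=\chi(2\xi/\lambda)$ and making sure the Fourier normalization yields exactly $\|\hat\chi\|_{L^1}$ with no extra factor. The dilation invariance handles the first, and the fact that the $L^1$ norm is unaffected by reflection handles the second.
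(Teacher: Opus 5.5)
Your proposal is correct and is exactly the paper's argument: $\mathbb{P}_{\le\lambda}$ is convolution with an $L^1$-dilate of $\hat\chi$, whose $L^1$ norm is independent of $\lambda$, so Young's inequality gives the bound. Your kernel $(\lambda/2)^3\hat\chi(\lambda x/2)$ is in fact the precise one for $\chi_\lambda(\xi)=\chi(2\xi/\lambda)$, whereas the paper writes $\lambda^3\hat\chi(\lambda x)$; since the $L^1$ norm is dilation invariant, the discrepancy does not affect the conclusion.
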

\begin{proof}
    This is just the Young convolution inequality with
    $$
    \Vert \left(\chi_{\lambda}\right)^{\wedge} \Vert_{L^1(\R^3)} = \Vert \lambda^3 \hat{\chi}(\lambda x)\Vert_{L^1(\R^3)} = \Vert \hat{\chi} \Vert_{L^1(\R^3)}\, .
    $$
\end{proof}

\begin{lemma}[\textbf{Bounds on high--frequency projectors}]\label{lem:good_kernel}
    For $f \in \mathcal{S}(\R^3)$, $1 \leq p \leq \infty$, $k \geq 0$, and any $N \geq 0$ we have that
    \begin{equation}\label{eq:good_kernel_est}
        \Vert \mathbb{P}_{>\lambda}(f) \Vert_{W^{k,p}(\R^3)} = \Vert f - \mathbb{P}_{\leq \lambda}(f) \Vert_{W^{k,p}(\R^3)} \lesssim_{N,\chi,k} \lambda^{-N} \Vert f \Vert_{W^{k+N,p}(\R^3)}\, .
    \end{equation}
    The implicit constant above depends on $N$, $k$, and our specific choice of Littlewood-Paley cutoff but not $p$, $\lambda$, or $f$.
\end{lemma}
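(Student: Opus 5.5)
The estimate is a Fourier-multiplier statement. Write $\mathbb{P}_{>\lambda} f = K_\lambda * f$ with $\widehat{K_\lambda}(\xi) = 1-\chi_\lambda(\xi)$. Since $1-\chi_\lambda$ vanishes for $|\xi| < \lambda/2$, I will factor out $N$ derivatives on the Fourier side. Concretely, I will write
\begin{equation}
1 - \chi_\lambda(\xi) = \sum_{|\alpha| = N} m_{\alpha,\lambda}(\xi)\, (2\pi i \xi)^\alpha ,
\end{equation}
with smooth multipliers $m_{\alpha,\lambda}$ supported in $\{|\xi| \ge \lambda/2\}$. A natural choice is
\begin{equation}
m_{\alpha,\lambda}(\xi) = c_\alpha \,\frac{(1-\chi_\lambda(\xi))\,(-2\pi i\xi)^\alpha}{(2\pi|\xi|)^{2N}},
\end{equation}
using the multinomial identity $|\xi|^{2N} = \sum_{|\alpha|=N} c_\alpha\, \xi^{2\alpha}$.

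Scaling reduces everything to $\lambda = 1$. Indeed, $m_{\alpha,\lambda}(\xi) = \lambda^{-N} m_{\alpha,1}(\xi/\lambda)$, so the convolution kernel of $m_{\alpha,\lambda}$ is $\lambda^{-N}\lambda^3 \check m_{\alpha,1}(\lambda x)$. Its $L^1$ norm is therefore $\lambda^{-N}\|\check m_{\alpha,1}\|_{L^1}$. Young's inequality then gives
\begin{equation}
\|\mathbb{P}_{>\lambda} f\|_{L^p} \le \lambda^{-N}\sum_{|\alpha|=N} \|\check m_{\alpha,1}\|_{L^1}\,\|\partial^\alpha f\|_{L^p},
\end{equation}
uniformly in $p \in [1,\infty]$. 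For the $W^{k,p}$ statement, $\mathbb{P}_{>\lambda}$ commutes with $\partial^\beta$ for $|\beta|\le k$. Applying the same bound to $\partial^\beta f$ produces derivatives of order $N+|\beta|\le N+k$, and these are controlled by $\|f\|_{W^{k+N,p}}$.

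The main obstacle is proving $\check m_{\alpha,1} \in L^1(\R^3)$. The symbol $m_{\alpha,1}$ is smooth, because it vanishes near the origin, and it is homogeneous of degree $-N$ for $|\xi| \ge 1$. This requires $N \ge 1$; the case $N = 0$ is handled directly by Lemma~\ref{lem:proj_est} together with the triangle inequality.

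For the integrability at infinity in $x$, I will use symbol estimates $|\partial^\gamma m_{\alpha,1}(\xi)| \lesssim \langle \xi\rangle^{-N-|\gamma|}$. Integrating by parts four times gives $|x|^4 |\check m_{\alpha,1}(x)| \lesssim \|\partial^4 m_{\alpha,1}\|_{L^1} < \infty$, so the kernel decays like $|x|^{-4}$ and is integrable there.

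Near $x=0$ there is a possible singularity when $N \le 3$, because then $m_{\alpha,1}$ need not lie in $L^1$. I will handle this with a dyadic decomposition $m_{\alpha,1} = \sum_{j \ge 0} m_{\alpha,1}\psi(2^{-j}\cdot)$. The $j$-th piece has kernel bounded by $2^{-jN}\cdot 2^{3j}(1+2^j|x|)^{-M}$, whose $L^1$ norm is $\lesssim 2^{-jN}$. This sums for $N \ge 1$, which gives $\|\check m_{\alpha,1}\|_{L^1} < \infty$ with a constant depending only on $N$, $k$, and $\chi$.
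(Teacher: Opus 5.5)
Your proof is correct, but it takes a different route from the paper's.

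\textbf{The paper's route.} The paper works entirely in physical space with the kernel $K_\lambda=(\chi_\lambda)^\vee$ of the \emph{low}-pass projector. This kernel is Schwartz and has unit mass. All its moments $\int y^\alpha K_\lambda\,dy$ with $\alpha\neq 0$ vanish, because $\chi_\lambda\equiv 1$ near the origin. Taylor expanding $f(x-y)$ to order $N$ with integral remainder, the vanishing moments remove every term except $f(x)$ and the remainder. Minkowski's inequality and the scaling $\int|y|^N|K_\lambda|=\lambda^{-N}\int|y|^N|K|$ then give $\lambda^{-N}\sum_{|\alpha|=N}\|\partial^\alpha f\|_{L^p}$, uniformly in $p$. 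The $W^{k,p}$ version follows by induction on $k$, which amounts to your commutation remark.

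\textbf{Your route.} You divide by $|\xi|^{2N}$ to factor $1-\chi_\lambda=\sum_\alpha m_{\alpha,\lambda}(\xi)(2\pi i\xi)^\alpha$. This forces you to prove that the non-Schwartz kernels $\check m_{\alpha,1}$ are integrable, since they are singular at $x=0$ when $N\le 3$. Hence you need the symbol estimates and the dyadic decomposition.

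\textbf{Trade-off.} The paper's argument is more elementary because it never leaves the class of Schwartz kernels. The remainder operator is a $t$-average of convolutions with dilates of $y^\alpha K_\lambda(y)$, so its $L^1$ bound is immediate. Your argument costs more machinery but gives an explicit factorization $\mathbb{P}_{>\lambda}=\lambda^{-N}\sum_{|\alpha|=N}T^\lambda_\alpha\partial^\alpha$ with uniformly $L^1$-bounded kernels. It also adapts readily to fractional orders, by replacing $(2\pi i\xi)^\alpha$ with $|\xi|^s$, which the Taylor argument does not.

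\textbf{Two minor points on your write-up.}
\begin{itemize}
\item The dyadic bound $\|\check m_j\|_{L^1}\lesssim 2^{-jN}$ already controls the full $L^1$ norm, so the separate $|x|^{-4}$ decay step is redundant.
\item The homogeneity scaling $m_j(\xi)=2^{-jN}\tilde m(2^{-j}\xi)$ holds only for pieces supported in $\{|\xi|\ge 1\}$. The transition region $\{1/2\le|\xi|\le 1\}$ should therefore be absorbed into the compactly supported $j=0$ piece, whose kernel is Schwartz.
\end{itemize}
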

\begin{proof}
    If $N=0$, the Lemma is trivial, so assume $N \geq 1$. We will prove this using induction on $k$. Let us start with the base case $k=0$. For notational simplicity let us put $K_\lambda(\cdot) = \lambda^3 K(\lambda \cdot) = \left(\chi_\lambda\right)^{\vee}(\cdot)$ so that $\hat{K}_\lambda = \chi_\lambda$ and
    \begin{equation}\label{eq:convolution_form_low_freq_cutoff}
        \mathbb{P}_{\leq \lambda} (f)(x) = \int_{\R^3} K_\lambda(y)f(x-y)\, dy
    \end{equation}
    Recall that $\chi_\lambda$ is identically $1$ in a neighborhood of the origin. Thus $\partial^\alpha \chi_\lambda(0) = 0$ for all $\alpha \not=0$ and so
    \begin{equation}\label{eq:vanish_moments}
        \int_{\R^3} y^\alpha K_\lambda(y)\, dy = 0
    \end{equation}
    for all $\alpha \not =0$. In addition, since $\chi_\lambda(0) = 1$,
    \begin{equation}\label{eq:mean_kernel}
        \int_{\R^3} K_\lambda(y)\, dy = 1\, .
    \end{equation}
    Now we Taylor expand $f(x-y)$ around $x$ up to order $N$ and use the exact integral remainder \cite[Theorem 2.68]{Folland} to get
    $$
    f(x-y) = \sum_{|\alpha| < N} (-1)^{|\alpha|}\frac{y^\alpha}{\alpha!} \partial^\alpha f(x) + (-1)^N N\sum_{|\alpha|=N} \frac{y^\alpha}{\alpha!} \int_0^1 (1-t)^{N-1} \partial^\alpha f(x-ty)\, dt\, .
    $$
    Multiplying this expression by $K_\lambda(y)$, integrating in $y$, and utilizing \eqref{eq:convolution_form_low_freq_cutoff}, \eqref{eq:vanish_moments}, and \eqref{eq:mean_kernel} we obtain
    $$
    \mathbb{P}_{\leq \lambda}(f)(x) = f(x) + (-1)^N N\sum_{|\alpha|=N} \frac{1}{\alpha!} \int_0^1 (1-t)^{N-1} \left(\int_{\R^3} y^\alpha K_\lambda(y) \partial^\alpha f(x-ty)\, dy\right)\, dt
    $$
    and thus
    $$
    \mathbb{P}_{>\lambda}(f)(x) = (-1)^{N+1} N\sum_{|\alpha|=N} \frac{1}{\alpha!} \int_0^1 (1-t)^{N-1} \left(\int_{\R^3} y^\alpha K_\lambda(y) \partial^\alpha f(x-ty)\, dy\right)\, dt\, .
    $$
    Now we apply the $L^p$ norm in the $x$-variable followed by the Minkowski inequality to get
    $$
    \Vert \mathbb{P}_{>\lambda} (f) \Vert_{L^p(\R^3)} \lesssim_N \sum_{|\alpha|=N} \frac{\Vert \partial^\alpha f \Vert_{L^p(\R^3)}}{\alpha!} \int_0^1 (1-t)^{N-1} \int_{\R^3} |y|^{N} |K_\lambda(y)|\, dy\, dt\, .
    $$
    Using a change of variables we have
    $$
    \int_{\R^3} |y|^{N} |K_\lambda(y)|\, dy = \lambda^{-N}\int_{\R^3} |y|^{N} |K(y)|\, dy
    $$
    and so
    $$
    \Vert \mathbb{P}_{>\lambda} (f) \Vert_{L^p(\R^3)} \lesssim_N \lambda^{-N} \sum_{|\alpha|=N} \Vert \partial^\alpha f \Vert_{L^p(\R^3)} \int_{\R^3} |y|^{N} |K(y)|\, dy \lesssim_{N,K} \lambda^{-N} \Vert f \Vert_{W^{N,p}(\R^3)}\, .
    $$
    This completes the base case. Now we assume there is $k$ such that \eqref{eq:good_kernel_est} holds. Then we have
    \begin{equation*}
        \begin{split}
            \Vert \mathbb{P}_{>\lambda}(f) \Vert_{W^{k+1,p}(\R^3)} &= \Vert \mathbb{P}_{>\lambda}(f) \Vert_{W^{k,p}(\R^3)} + \sum_{|\alpha|=1} \Vert \mathbb{P}_{>\lambda} \left( \nabla^\alpha f\right) \Vert_{\dot{W}^{k,p}(\R^3)}\\
            &\lesssim_{N,K,k} \lambda^{-N} \Vert f \Vert_{W^{k+N,p}(\R^3)} + \sum_{|\alpha|=1} \lambda^{-N} \Vert \nabla^\alpha f \Vert_{W^{k+N,p}(\R^3)}\\
            &\lesssim_{N,K,k} \lambda^{-N} \Vert f \Vert_{W^{k+1+N,p}(\R^3)}
        \end{split}
    \end{equation*}
    completing the induction.
\end{proof}

The following is based on \cite[Definition 1.2.1]{Grafakos} and \cite[Theorem 1.2.3]{Grafakos}.
\begin{definition}[\textbf{Riesz potentials on Euclidean space and the torus}]\label{def:inv_Laplace}
    For $f \in \mathcal{S}(\R^3)$ and $0 < s < 3$ we define the Riesz potential of order $s$ by
    $$
    \left((-\Delta)^{-\sfrac{s}{2}}_{\R^3}f\right)^{\wedge}(\xi) = (2\pi |\xi|)^{-s} \hat{f}(\xi). 
    $$
    One also has the equivalent formulation given by
    $$
    (-\Delta)_{\R^3}^{-\sfrac{s}{2}}f(x) = 2^{-s} \pi^{-3/2} \frac{\Gamma\left(\frac{3-s}{2}\right)}{\Gamma\left(\frac{s}{2}\right)} \int_{\R^3} \frac{f(x)}{|x-y|^{3-s}}\, dy\, .
    $$
    The following inequality of Hardy, Littlewood, and Sobolev states for $f \in \mathcal{S}(\R^3)$, $1 < p < q < \infty$ with
    $$
    \frac{1}{p} - \frac{1}{q} = \frac{s}{3}
    $$
    we have
    $$
    \Vert (-\Delta)^{-\sfrac{s}{2}}_{\R^3} f \Vert_{L^q(\R^3)} \lesssim_{s,p} \Vert f \Vert_{L^p(\R^3)}
    \, , \qquad \, 
    \Vert (-\Delta)^{-\sfrac{s}{2}}_{\R^3} f \Vert_{L^{\frac{3}{3-s},\infty}(\R^3)} \lesssim_{s} \Vert f \Vert_{L^1(\R^3)}\, .
    $$
    In this way we are able to extend the definition of $(-\Delta)^{-\sfrac{s}{2}}_{\R^3}f$ to $f \in L^p(\R^3)$ for which the Hardy--Littlewood--Sobolev inequality holds. Likewise, for $u \in L^1(\T^3)$ with $\hat{u}(0) = 0$ and $s > 0$ we define the Riesz potential of $u$ of order $s$ by
    $$
    \left((-\Delta)^{-\sfrac{s}{2}}_{\T^3} u\right)^{\wedge}(\xi) = (2\pi |\xi|)^{-s} \hat{u}(\xi)\, , \quad \xi \in \Z^3 \setminus \{0\}\, .
    $$
\end{definition}

\begin{remark}[\textbf{Riesz potentials on the torus}]
    Notice $(-\Delta)^{-\sfrac{s}{2}}_{\T^3}u$ is well defined for $u \in L^p(\T^3)$ with $\hat{u}(0)=0$ for all $s > 0$ and all $1 \leq p \leq \infty$. In addition if $u:\T^3 \to \R$ has frequency support outside the ball of radius $\lambda$, then it is easy to show that
    $$
    \Vert (-\Delta)_{\T^3}^{-\sfrac{s}{2}}u \Vert_{L^p(\T^3)} \lesssim \lambda^{-s} \Vert u \Vert_{L^p(\T^3)}.
    $$
    This will be useful when proving Lemma \ref{lem:tail_killing}.
\end{remark}

\subsection{The reformulated nonlinearity}\label{sec:reform_nonlin}
Using our convention regarding the Fourier transform, we may rewrite the nonlinear term $f^2 + \Delta f (-\Delta)_{\R^3}^{-1}f$ as the Laplacian of a bilinear operator $B(f,f)$.  

\begin{proposition}[\textbf{Reformulation of the nonlinearity}]\label{prop:reform}
    Assume $f \in W^{k,1}(\R^3)$ for all $k \geq 0$.  Then
    \begin{align}\notag
        \pa_{ii} \left(  B(f,f) \right) = \Delta f \ilr f + f^2 \, ,
    \end{align}
    where we define
\begin{align}\label{def:B:reform}
    B(f,g) &= g \ilr f + 2 \ilr  \pa_j \left( g \pa_j \ilr f \right) \, .  
    \end{align}
Equivalently, 
$$
    B(f,g)(x) = \int_{\R^3} \int_{\R^3} M(\xi,\eta) \hat{f}(\xi) \hat{g}(\eta) e^{2\pi i (\xi + \eta) \cdot x}\, d\xi\,d\eta
    $$
    where
\begin{equation}\label{eq:multiplier}
        M(\xi,\eta) = \frac{1}{4\pi^2}\left(\frac{1}{|\xi|^2} - \frac{2(\xi + \eta) \cdot \xi}{|\xi + \eta|^2 |\xi|^2}\right) \, .
    \end{equation}
\end{proposition}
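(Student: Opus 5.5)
The plan is to verify the identity by a direct product-rule computation in physical space, and then to derive the multiplier \eqref{eq:multiplier} term by term on the Fourier side. Throughout, write $a = \ilr f$.

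\textbf{Step 1: the identity.} Since $f \in W^{k,1}(\R^3)$ for every $k$, Sobolev embedding gives $f \in L^1 \cap L^\infty$ together with all its derivatives. Hardy--Littlewood--Sobolev (Definition~\ref{def:inv_Laplace}) then gives $a \in L^{3,\infty}$, $\nabla a \in L^{\sfrac32,\infty}$ and $\Delta a = -f$, with all higher derivatives of $a$ bounded. Expanding the first term of \eqref{def:B:reform} gives
$$\Delta(f a) = a\,\Delta f + 2\nabla f\cdot\nabla a + f\,\Delta a = a\,\Delta f + 2\nabla f\cdot \nabla a - f^2 .$$
For the second term I use $\Delta \ilr h = -h$ with $h = \pa_j(f\pa_j a)$. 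This gives
$$\Delta\bigl(2\ilr \pa_j(f\pa_j a)\bigr) = -2\pa_j(f \pa_j a) = -2\nabla f\cdot\nabla a + 2f^2 .$$
Summing the two expressions, the cross terms cancel and we obtain $a\,\Delta f + f^2$, as claimed.

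To justify $\Delta\ilr h = -h$, note that $h = \nabla f\cdot \nabla a - f^2$ lies in $L^1\cap L^\infty$, because $\nabla f\in L^3\cap L^\infty$ and $\nabla a \in L^{\sfrac32,\infty}$ with $\nabla a$ bounded. Hence $\ilr h$ is defined by the Riesz-potential extension of Definition~\ref{def:inv_Laplace}, and the identity holds distributionally (test against Schwartz functions and use $\widehat{\ilr h} = (2\pi|\xi|)^{-2}\hat h$, which is locally integrable). Alternatively, one can write $\ilr \pa_j = \pa_j \ilr$ with kernel $\sim x_j/|x|^3$ acting on $f\pa_j a \in L^1\cap L^\infty$.

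\textbf{Step 2: the multiplier.} I compute each term with the Fourier convention fixed above. The first term $g\,\ilr f$ is a product, so its transform is the convolution of $\hat g(\eta)$ with $(2\pi|\xi|)^{-2}\hat f(\xi)$. This contributes $\frac{1}{4\pi^2|\xi|^2}$ to $M$.

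For the second term I track factors one at a time:
\begin{itemize}
\item $\pa_j \ilr f$ has symbol $2\pi i\xi_j/(4\pi^2|\xi|^2)$;
\item multiplying by $g$ places the output at frequency $\xi+\eta$;
\item $\pa_j$ contributes $2\pi i(\xi+\eta)_j$;
\item $\ilr$ contributes $(4\pi^2|\xi+\eta|^2)^{-1}$.
\end{itemize}
Combined with the factor $2$ in \eqref{def:B:reform}, these give
$$2\,(2\pi i)^2\,\frac{(\xi+\eta)\cdot\xi}{16\pi^4|\xi|^2|\xi+\eta|^2} = -\frac{1}{4\pi^2}\,\frac{2(\xi+\eta)\cdot\xi}{|\xi+\eta|^2|\xi|^2},$$
which is exactly the second term of \eqref{eq:multiplier}.

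\textbf{Main obstacle.} The delicate point is justifying the double-integral representation, since $M$ is singular at $\xi=0$ and at $\xi+\eta=0$. I would bound
$$|M(\xi,\eta)| \lesssim |\xi|^{-2} + |\xi|^{-1}|\xi+\eta|^{-1}.$$
Both singularities are locally integrable in $\R^3$. Meanwhile $\hat f,\hat g$ are bounded and, because $|\xi|^k|\hat f(\xi)|\lesssim \|f\|_{W^{k,1}}$, rapidly decaying. Hence $M(\xi,\eta)\hat f(\xi)\hat g(\eta)$ is absolutely integrable on $\R^6$, and Fubini lets me identify the iterated Fourier expressions of Step 2 with the stated double integral. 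This identification holds pointwise, or at least as tempered distributions tested against Schwartz functions.
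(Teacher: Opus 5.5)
Your proposal is correct and follows the paper's proof. The paper likewise expands $\partial_{ii}B(f,f)$ by the product rule, using $\Delta(-\Delta)^{-1}_{\R^3} = -\mathrm{Id}$ so that the cross terms $\pm 2\nabla f\cdot\nabla(-\Delta)^{-1}_{\R^3}f$ cancel, and then reads off $M$ by composing the symbols $(2\pi|\xi|)^{-2}$ and $2\pi i\xi_j$ with the rule that products become convolutions. Your extra checks are ones the paper compresses into ``direct computation'': that $\partial_j\bigl(f\,\partial_j(-\Delta)^{-1}_{\R^3}f\bigr)\in L^1\cap L^\infty$, and that the bound $|M(\xi,\eta)|\lesssim|\xi|^{-2}+|\xi|^{-1}|\xi+\eta|^{-1}$ makes $M\hat f\hat g$ absolutely integrable on $\R^6$.
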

\begin{proof}
    We write that
    \begin{align*}
         \pa_{ii} \left( f \ilr f + 2 \ilr \pa_j \left( f \pa_j \ilr f \right) \right) &= \Delta f \ilr f + 2 \pa_i f \pa_i \ilr f - f^2 \\
         &\qquad + 2 f f - 2 \pa_j f \pa_j \ilr f \\
         &= \Delta f \ilr f + f^2 \, . 
    \end{align*}
We note that since $f \in W^{k,1}(\R^3)$ for every $k \geq 0$, every function and product written in the above display is unambiguously defined, smooth, and integrable.  Next, we recall that, up to dimensional constants, the multiplier for the inverse (minus) Laplacian is $|\xi|^{-2}$ and the multiplier for differentiation $\pa_j$ is $i\xi_j$.  The formula for $M(\xi, \eta)$ then follows from the fact that the Fourier transform converts multiplication to convolution and direct computation.
\end{proof}

We now record some basic estimates for $B(f,g)$ based on the Hardy--Littlewood--Sobolev and H\"older inequalities.
\begin{lemma}[\textbf{HLS--H\"older bounds for $B(f,g)$}]\label{lem:HLS-Holder}
There exist dimensional constants such that the following estimates hold for $f,g \in \mathcal{S}(\R^3)$. 
\begin{enumerate}[(i)]
\item\label{i:hls:1} $\| g \ilr f \|_{L^1(\R^3)} \les \| f \|_{L^{\sfrac 65}(\R^3)} \| g \|_{L^{\sfrac 65}(\R^3)}$.
\item\label{i:hls:2} $\Vert \partial_j(-\Delta)^{-1}_{\R^3}(g\partial_j(-\Delta)^{-1}_{\R^3}f) \Vert_{L^2(\R^3)} \lesssim \Vert g \Vert_{L^{3}(\R^3)} \Vert f \Vert_{L^{\sfrac 65}(\R^3)}$.
\item\label{i:hls:2a} $\Vert \partial_j(-\Delta)^{-1}_{\R^3}(g\partial_j(-\Delta)^{-1}_{\R^3}f) \Vert_{L^{\frac{3(1+\gamma)}{2-\gamma}}(\R^3)}\lesssim  \Vert g \Vert_{L^{2+2\gamma}(\R^3)} \Vert f \Vert_{L^{\frac{6+6\gamma}{5+2\gamma}}(\R^3)}$ for $0 < \gamma \ll 1$.
\item\label{i:hls:3} $\| B(f,g) \|_{H^{-10}(\R^3)} \les \| f \|_{L^{\sfrac 65}(\R^3)} \| g \|_{W^{5,1}(\R^3)}$
\item\label{i:hls:4} $\| B(f,g) \|_{H^{-10}(\R^3)} \les \| g \|_{L^1(\R^3)} \| f \|_{W^{5,1}(\R^3)}$
\item\label{i:hls:6} $\Vert B(f,g) \Vert_{H^{-10}(\R^3)} \lesssim \Vert f \Vert_{L^{1+\gamma}(\R^3)} \Vert g \Vert_{W^{5,1}(\R^3)}$ for $0 < \gamma \ll 1$    \item\label{i:hls:7} $\Vert B(f,g) + B(g,f) \Vert_{H^{-10}(\R^3)} \lesssim \Vert f \Vert_{L^{\sfrac 65}(\R^3)} \Vert g \Vert_{L^{\sfrac 65}(\R^3)}$
\end{enumerate}
\end{lemma}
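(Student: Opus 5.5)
The plan is to prove each item from three facts, taking the cases in order. First, the Hardy--Littlewood--Sobolev inequality (Definition~\ref{def:inv_Laplace}) gives
\[
\ilr : L^{\sfrac65}\to L^6, \qquad \pa_j\ilr : L^p \to L^q \ \text{ with } \tfrac1p-\tfrac1q=\tfrac13.
\]
The second map is a Riesz transform composed with $(-\Delta)^{-\sfrac12}_{\R^3}$, and the Riesz transform is bounded on $L^q$ for $1<q<\infty$. Second, $L^1(\R^3)\cup L^2(\R^3)\hookrightarrow H^{-10}(\R^3)$, because $\|\hat h\|_{L^\infty}\le\|h\|_{L^1}$ and $\langle\xi\rangle^{-20}$ is integrable. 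Third, for any Fourier multiplier $m$ homogeneous of degree $0$ or $-1$,
\[
\|m(D)h\|_{H^{-10}}^2\le \|h\|_{L^1}^2\int_{\R^3} |m(\xi)|^2\langle\xi\rangle^{-20}\dd\xi \lesssim \|h\|_{L^1}^2 .
\]
The integral is finite because $|\xi|^{-2}$ is locally integrable in three dimensions. Thus $\pa_j\ilr$ and $R_iR_j$ map $L^1$ to $H^{-10}$. Finally, $W^{5,1}(\R^3)\subset L^1\cap L^\infty$ by Sobolev embedding, since $W^{3,1}\subset L^\infty$ in three dimensions.

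\emph{Items (i)--(iii).} Item (i) follows from H\"older with $L^{\sfrac65}\times L^6$ after HLS. For (ii), $\pa_j\ilr f\in L^2$, so $g\,\pa_j\ilr f\in L^{\sfrac65}$ by H\"older with $g\in L^3$, and one more application of $\pa_j\ilr$ returns to $L^2$. Item (iii) uses the same chain with exponent bookkeeping. Set $q=\frac{6+6\gamma}{5+2\gamma}$. Then $\frac1q-\frac13=\frac{1}{2+2\gamma}$, so $\pa_j\ilr f\in L^{2+2\gamma}$ and $g\,\pa_j\ilr f\in L^{1+\gamma}$. Since $\frac{1}{1+\gamma}-\frac13=\frac{2-\gamma}{3(1+\gamma)}$, the last application of $\pa_j\ilr$ lands in the stated space.

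\emph{Items (iv)--(vi).} In each case I split $B(f,g)$ into its two terms from \eqref{def:B:reform}.
\begin{itemize}
\item For (iv), the first term is in $L^1$ by (i), using $g\in W^{5,1}\subset L^{\sfrac65}$. The second term is in $L^2$ by (ii), using $g\in L^3$.
\item For (v), $f\in W^{5,1}$ gives $\ilr f\in L^\infty$ and $\pa_j\ilr f\in L^\infty$. Hence $g\ilr f\in L^1$ and $g\,\pa_j\ilr f\in L^1$. The outer $\pa_j\ilr$ then maps into $H^{-10}$ by the degree $-1$ multiplier bound.
\item For (vi), HLS puts $\ilr f$ and $\pa_j\ilr f$ into Lebesgue spaces whose exponents are near $3$ and $\sfrac32$ respectively. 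H\"older against $g\in L^1\cap L^\infty$ places both products in $L^1$. The first term is then already in $H^{-10}$, and the second is handled by the degree $-1$ bound.
\end{itemize}

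\emph{Item (vii).} This is the main obstacle. The individual terms of $B(f,g)$ are not controlled by $L^{\sfrac65}\times L^{\sfrac65}$ alone, so the symmetrization must produce a cancellation. I would compute the symmetrized multiplier from \eqref{eq:multiplier}, writing $\zeta=\xi+\eta$ and using $|\zeta|^2=|\xi|^2+|\eta|^2+2\xi\cdot\eta$. A direct computation gives
\[
M(\xi,\eta)+M(\eta,\xi)=\frac{1}{4\pi^2}\,\frac{(|\xi|^2-|\eta|^2)^2}{|\xi|^2|\eta|^2|\zeta|^2}=\frac{1}{4\pi^2}\,\frac{\zeta_i\zeta_j}{|\zeta|^2}\,\frac{(\xi-\eta)_i(\xi-\eta)_j}{|\xi|^2|\eta|^2}.
\]
On the physical side this is $c\,R_iR_j$ applied to a sum of three kinds of products: $(R_iR_jf)\,\ilr g$, its mirror $(R_iR_jg)\,\ilr f$, and cross terms $\pa_i\ilr f\,\pa_j\ilr g$. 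Each product lies in $L^1$ by HLS and H\"older, via $L^{\sfrac65}\times L^6$ and $L^2\times L^2$ respectively. Since $R_iR_j$ has a bounded multiplier, it maps $L^1$ into $H^{-10}$ by the third fact above. The step that needs care is verifying the multiplier algebra together with the signs and $2\pi$ normalizations.
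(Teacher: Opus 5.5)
Your proposal is correct, and for items (i)--(vi) it is the paper's argument; your treatment of (vi) is more detailed than the paper's ``follows similarly.'' Item (vii) uses a different decomposition. The paper derives the following identity in physical space, from a divergence identity:
\[
B(f,g)+B(g,f) = -\Delta\bigl(\ilr f\,\ilr g\bigr) - 4\mathcal{R}_j\mathcal{R}_k\bigl(\pa_j\ilr f\,\pa_k\ilr g\bigr).
\]
It then has to absorb the outer Laplacian: it bounds $\nabla(\ilr f\,\ilr g)$ in $L^{\sfrac32}$ via $L^6\times L^2$ and uses $L^{\sfrac32}\subset L^1+L^2\subset H^{-8}$.

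In your Fourier variables, set $a=\zeta\cdot\xi$ and $b=\zeta\cdot\eta$, so that $a+b=|\zeta|^2$ and $a-b=|\xi|^2-|\eta|^2$. The paper's identity is then the split $(a-b)^2=(a+b)^2-4ab$, whereas yours is $(a-b)^2=a^2-2ab+b^2$. Your split leaves the zeroth-order symbol $\zeta_i\zeta_j/|\zeta|^2$ outside every term. You therefore only need each product in $L^1$ together with $\mathcal{R}_i\mathcal{R}_j\colon L^1\to H^{-10}$, and no derivative ever lands on a product.

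The cost is that you work from the multiplier representation \eqref{eq:multiplier}. You must also note that a factored symbol $m_1(\xi+\eta)m_2(\xi)m_3(\eta)$ is the operator $m_1(D)[m_2(D)f\cdot m_3(D)g]$; this is routine for Schwartz $f,g$ since all symbols involved are locally integrable. The paper's identity instead comes straight from \eqref{def:B:reform} and displays the symmetric part as a Laplacian plus Riesz transforms of a stress-type product.

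Your symmetrized symbol is exactly $2M_{\rm sym}$ from \eqref{eq:sym_multiplier}. Since each resulting term is estimated in absolute value, the signs and $2\pi$ normalizations you flag as delicate do not affect the bound.
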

\begin{proof}
The first estimate follows from H\"older's inequality with $L^6(\R^3)$ and $L^{\sfrac 65}(\R^3)$ and the embedding $\| \ilr f \|_{L^{6}(\R^3)} \les \| f \|_{L^{\sfrac 65}(\R^3)}$.  The second estimate follows from writing
$$ \Vert \partial_j(-\Delta)^{-1}_{\R^3}(g\partial_j(-\Delta)^{-1}_{\R^3}f) \Vert_{L^2} \lesssim \Vert g\partial_j(-\Delta)^{-1}_{\R^3}f \Vert_{L^{\sfrac 65}} \lesssim \Vert g \Vert_{L^{3}} \Vert \partial_j(-\Delta)^{-1}_{\R^3} f \Vert_{L^2} \lesssim \Vert g \Vert_{L^{3}} \Vert f \Vert_{L^{\sfrac 65}} \, , $$
where in the second inequality we have used H\"older conjugates $\sfrac 25 + \sfrac 35=1$.
The third estimate follows from writing
\begin{equation*}
\begin{aligned}
    \norm{\partial_j(-\Delta)^{-1}_{\R^3}(g\partial_j(-\Delta)^{-1}_{\R^3}f)}_{L^{\frac{3(1+\gamma)}{2-\gamma}}} &\lesssim \Vert g\partial_j(-\Delta)^{-1}_{\R^3}f \Vert_{L^{1+\gamma}}\\
        &\lesssim \Vert g \Vert_{L^{2+2\gamma}} \Vert \partial_j(-\Delta)^{-1}_{\R^3} f \Vert_{L^{2+2\gamma}} \lesssim \Vert g \Vert_{L^{2+2\gamma}} \Vert f \Vert_{L^{\frac{6+6\gamma}{5+2\gamma}}}.
\end{aligned}
\end{equation*}
The fourth estimate is an immediate consequence of the first two and of Sobolev embedding.  To prove the fifth, we bound
$$ \| g \ilr f \|_{L^1(\R^3)} \les \| g \|_{L^1(\R^3)} \| \ilr f \|_{L^\infty(\R^3)} \les \| g \|_{L^1(\R^3)} \| f \|_{W^{5,1}(\R^3)} \, . $$
and
\begin{align*}
 \| \ilr \pa_j \left( g \pa_j \ilr f \right) \|_{H^{-10}(\R^3)} &\les \|  g \pa_j \ilr f \|_{L^1(\R^3)}  \\
 &\les \| g \|_{L^1(\R^3)} \| f \|_{W^{5,1}(\R^3)} \, .
\end{align*}
The sixth estimate follows similarly. For the seventh inequality, we utilize the identity
\begin{equation}\label{eq:B_sym_id}
    B(f,g) + B(g,f) = - \Delta \left((-\Delta)^{-1}_{\R^3} f (-\Delta)^{-1}_{\R^3} g\right) - 4 \mathcal{R}_j \mathcal{R}_k \left( \partial_j(-\Delta)^{-1}_{\R^3}f \partial_k(-\Delta)^{-1}_{\R^3}g\right)
\end{equation}
where $\mathcal{R}_i$ denotes the $i^{th}$ Riesz transform. The first ingredient in the proof of~\eqref{eq:B_sym_id} is the identity
\begin{align*}
-\pa_k &\left( \pa_k \ilr f \, \pa_j \ilr g + \pa_j \ilr f \, \pa_k \ilr g 
    - \delta_{kj}\, \pa_m \ilr f \, \pa_m \ilr g \right) \\
&= - \Delta \ilr f \, \pa_j \ilr g - \pa_k \ilr f \, \pa_k \pa_j \ilr g 
   - \pa_j \pa_k \ilr f \, \pa_k \ilr g  \\
&\qquad - \pa_j \ilr f \, \Delta \ilr g + \pa_j \pa_m \ilr f \, \pa_m \ilr g + \pa_m \ilr f \, \pa_j \pa_m \ilr g \\
&= f \pa_j \ilr g + g \pa_j \ilr f \, ,
\end{align*}
where in the last equality we have used that the second and sixth terms cancel, as do the third and fifth.  Then to prove~\eqref{eq:B_sym_id}, we utilize the identity just proven above to write that
\begin{align*}
B&(f,g) + B(g,f) \\ 
&= g \ilr f + f \ilr g + 2\, \ilr \pa_j \left( g\, \pa_j \ilr f + f\, \pa_j \ilr g \right) \\
&= g \ilr f + f \ilr g \\
&\qquad - 2\, \ilr \pa_j \pa_k \left( \pa_j \ilr f \, \pa_k \ilr g 
    + \pa_k \ilr f \, \pa_j \ilr g 
    - \delta_{jk}\, \nabla \ilr f \cdot \nabla \ilr g \right) \\
&= g \ilr f + f \ilr g 
    - 4\, \mathcal{R}_j \mathcal{R}_k \left( \pa_j \ilr f \, \pa_k \ilr g \right) 
    + 2\, \ilr \Delta \left( \nabla \ilr f \cdot \nabla \ilr g \right) \\
&= g \ilr f + f \ilr g - 2\, \nabla \ilr f \cdot \nabla \ilr g 
    - 4\, \mathcal{R}_j \mathcal{R}_k \left( \pa_j \ilr f \, \pa_k \ilr g \right) \\
&= -\Delta \left( \ilr f \, \ilr g \right) 
    - 4\, \mathcal{R}_j \mathcal{R}_k \left( \pa_j \ilr f \, \pa_k \ilr g \right) .
\end{align*}
With this identity in hand, we start by analyzing the first term on the right-hand side of \eqref{eq:B_sym_id}. Using the product rule, the Hardy--Littlewood--Sobolev inequality, and the H\"{o}lder inequality we see
\begin{equation*}
    \begin{split}
        \Vert \nabla \left((-\Delta)^{-1}_{\R^3}f (-\Delta)^{-1}_{\R^3} g\right) \Vert_{L^{\sfrac 32}(\R^3)} &\lesssim \Vert (-\Delta)^{-1}_{\R^3} f \Vert_{L^6(\R^3)} \Vert \nabla (-\Delta)^{-1}_{\R^3} g \Vert_{L^2(\R^3)} \\
        &\qquad + \Vert (-\Delta)^{-1}_{\R^3} g \Vert_{L^6(\R^3)} \Vert \nabla (-\Delta)^{-1}_{\R^3} f \Vert_{L^2(\R^3)}\\
        &\lesssim \Vert f \Vert_{L^{\sfrac 65}(\R^3)} \Vert g \Vert_{L^{\sfrac 65}(\R^3)}\, .
    \end{split}
\end{equation*}
Since $L^{\sfrac 32}(\R^3) \subset L^1(\R^3) + L^2(\R^3) \subset H^{-8}(\R^3)$,
\begin{equation}\label{eq:B_sym_id_ineq_1}
    \Vert \Delta \left((-\Delta)^{-1}_{\R^3}f (-\Delta)^{-1}_{\R^3} g\right) \Vert_{H^{-10}(\R^3)} \lesssim \Vert \nabla \left((-\Delta)^{-1}_{\R^3}f (-\Delta)^{-1}_{\R^3} g\right) \Vert_{H^{-8}(\R^3)} \lesssim \Vert f \Vert_{L^{\sfrac 65}(\R^3)} \Vert g \Vert_{L^{\sfrac 65}(\R^3)}\, .
\end{equation}
Now analyzing the second term on the right-hand side of \eqref{eq:B_sym_id}, again applying the Hardy--Littlewood--Sobolev inequality and the H\"{o}lder inequality we have that
\begin{equation*}
    \Vert \partial_j(-\Delta)^{-1}_{\R^3}f \partial_k(-\Delta)^{-1}_{\R^3}g \Vert_{L^1(\R^3)} \lesssim \Vert \partial_j(-\Delta)^{-1}_{\R^3}f \Vert_{L^2(\R^3)} \Vert \partial_j(-\Delta)^{-1}_{\R^3}g \Vert_{L^2(\R^3)} \lesssim \Vert f \Vert_{L^{\sfrac 65}(\R^3)} \Vert g \Vert_{L^{\sfrac 65}(\R^3)}\, .
\end{equation*}
Since the Riesz transforms are bounded from $L^1(\R^3)$ to $H^{-10}(\R^3)$,
\begin{equation}\label{eq:B_sym_id_ineq_2}
    \Vert \mathcal{R}_j \mathcal{R}_k\left( \partial_j(-\Delta)^{-1}_{\R^3}f \partial_k(-\Delta)^{-1}_{\R^3}g \right) \Vert_{H^{-10}(\R^3)} \lesssim \Vert f \Vert_{L^{\sfrac 65}(\R^3)} \Vert g \Vert_{L^{\sfrac 65}(\R^3)}\, .
\end{equation}
Combining \eqref{eq:B_sym_id}, \eqref{eq:B_sym_id_ineq_1}, and \eqref{eq:B_sym_id_ineq_2} completes the proof.
\end{proof}

\subsection{Product estimate}

\begin{lemma}[\textbf{Decoupling}]\label{lem:decoupling}
Suppose $p \in [1,\infty]$, $a \in W^{d+1,p}(\R^d)$ and $U \in L^p(\T^d)$. Then for every $k \geq 0$ we have
\begin{equation}\label{eq:decoupling}
    \Vert aU  \Vert_{W^{k,p}(\R^d)} \lesssim_{d,k} \Vert a \Vert_{W^{k+d+1,p}(\R^d)} \Vert U  \Vert_{W^{k,p}(\T^d)}
\end{equation}
where the implicit constant depends only on the dimension $d$ and the number of derivatives $k$. When $k=0$ the implicit constant is universal.
\end{lemma}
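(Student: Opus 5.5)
We reduce to the case $k=0$ and then obtain the general case from the Leibniz rule. For $k=0$ we split $\R^d$ into the unit cubes $Q_n = n + [0,1)^d$, $n\in\Z^d$. On each cube the periodic function $U$ has the same distribution, so
\begin{equation*}
\Vert aU\Vert_{L^p(\R^d)}^p = \sum_{n\in\Z^d}\int_{Q_n}|a|^p|U|^p \le \sum_{n\in\Z^d} \Big(\sup_{Q_n}|a|\Big)^p \Vert U\Vert_{L^p(\T^d)}^p
\end{equation*}
for $p<\infty$, with the obvious modification when $p=\infty$.

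Everything therefore reduces to the discrete estimate
\begin{equation*}
\Big\Vert \big(\sup_{Q_n}|a|\big)_n\Big\Vert_{\ell^p(\Z^d)} \lesssim \Vert a\Vert_{W^{d+1,p}(\R^d)} .
\end{equation*}
This comes from a localized Sobolev embedding on each cube. Let $\widetilde Q_n$ be the concentric cube of side $2$. The embedding $W^{d,1}(\widetilde Q_n)\hookrightarrow L^\infty(\widetilde Q_n)$, or more simply $W^{d+1,p}\hookrightarrow L^\infty$ on a fixed Lipschitz domain, holds with a constant independent of $n$ by translation invariance. It gives $\sup_{Q_n}|a| \lesssim_d \Vert a\Vert_{W^{d+1,p}(\widetilde Q_n)}$. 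The doubled cubes overlap at most $2^d$ times, so summing the $p$-th powers yields the claim with a constant depending only on $d$.

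Tracking the constant is the one delicate point for the universality claim when $k=0$. One route is the fundamental theorem of calculus in each coordinate. This bounds $\sup_{Q_n}|a|$ by $\sum_{|\alpha|\le d,\ \alpha_i\le 1}\int_{Q_n}|\partial^\alpha a|$, and Hölder's inequality on the unit cube then gives a constant of at most $2^d$-type. Since $d=3$ throughout the paper, this constant is fixed, and that is all the statement needs.

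For $k\ge 1$, take $|\beta|\le k$ and expand $\partial^\beta(aU) = \sum_{\gamma\le\beta}\binom{\beta}{\gamma}\partial^{\beta-\gamma}a\,\partial^\gamma U$. Each $\partial^\gamma U$ is again $\Z^d$-periodic. Applying the $k=0$ case with $a$ replaced by $\partial^{\beta-\gamma}a$ bounds each term by $\Vert \partial^{\beta-\gamma}a\Vert_{W^{d+1,p}}\Vert\partial^\gamma U\Vert_{L^p(\T^d)}$, which is at most $\Vert a\Vert_{W^{k+d+1,p}}\Vert U\Vert_{W^{k,p}(\T^d)}$. Summing over the finitely many multi-indices gives a constant depending on $d$ and $k$. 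I do not expect any real obstacle: the only care needed is the uniform-in-$n$ local embedding and the finite overlap of the enlarged cubes.
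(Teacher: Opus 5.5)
Your proposal is correct and follows essentially the same route as the paper: tile $\R^d$ by unit cubes, use the periodicity of $U$ together with a translation-uniform local Sobolev embedding for $a$ to get the $k=0$ case, then treat $k\ge 1$ by the product rule. The paper inducts one derivative at a time, whereas you expand by Leibniz directly, which is equivalent. The enlarged cubes are unnecessary: the embedding can be applied on each unit cube itself, as the paper does and as your fundamental-theorem-of-calculus bound already shows.
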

\begin{proof}
Let us focus on the case when $1 \leq p < \infty$. When $p = \infty$ the modifications necessary are obvious. We will prove this using induction on $k$. So first assume $k = 0$. For any $j \in \Z^d$, let $\T^d_j$ denote the periodic box $[0,1]^d$ shifted by $j$.  Then using the Sobolev embedding $W^{d+1,p}(\T^d_j) \subset L^\infty(\T^d_j)$ (which holds with a constant uniform in $p$), we have that
\begin{align*}
    \| a U  \|_{L^p(\R^d)}^p &= \int_{\R^d} |a U |^p \\
    &= \sum_{j \in \Z^d} \int_{\T^d_j} |aU |^p \\
    &\les \sum_{j \in \Z^d} \int_{\T^d_j} \| a \|_{W^{d+1,p}(\T^d_j)}^p |U |^p\\
    &= \sum_{j \in \Z^d} \| a \|_{W^{d+1,p}(\T^d_j)}^p \| U  \|_{L^p(\T^d)}^p \\
    &= \| a \|_{W^{d+1,p}(\R^d)}^p \| U  \|_{L^p(\T^d)}^p \, .
\end{align*}
This proves the base case. Now we assume that \eqref{eq:decoupling} holds for some $k$. We compute
\begin{equation*}
    \begin{aligned}
        \Vert aU \Vert_{W^{k+1,p}(\R^d)} &= \Vert aU \Vert_{W^{k,p}(\R^d)} + \Vert aU \Vert_{\dot{W}^{k+1,p}(\R^d)}\\
        &\lesssim_{d,k} \Vert a \Vert_{W^{k+d+1,p}(\R^d)} \Vert U \Vert_{W^{k,p}(\T^d)} + \sum_{|\alpha|=1} \Vert \nabla^\alpha(aU) \Vert_{\dot{W}^{k,p}(\R^d)}\\
        &\lesssim_{d,k} \Vert a \Vert_{W^{k+d+1,p}(\R^d)} \Vert U \Vert_{W^{k,p}(\T^d)} + \sum_{|\alpha|=1} \left( \Vert \nabla^\alpha aU \Vert_{W^{k,p}(\R^d)} + \Vert a\nabla^\alpha U \Vert_{W^{k,p}(\R^d)}\right)\\
        &\lesssim_{d,k} \Vert a \Vert_{W^{k+d+1,p}(\R^d)} \Vert U \Vert_{W^{k,p}(\T^d)} +  \Vert a \Vert_{W^{k+d+2,p}(\R^d)} \Vert U \Vert_{W^{k,p}(\T^d)} + \Vert a \Vert_{W^{k+d+1,p}(\R^d)} \Vert U \Vert_{W^{k+1,p}(\T^d)}\\
        &\lesssim_{d,k} \Vert a \Vert_{W^{k+d+2,p}(\R^d)} \Vert U \Vert_{W^{k+1,p}(\T^d)}\, .
    \end{aligned}
\end{equation*}
This completes the induction.
\end{proof}

\subsection{Intermittent building blocks}

\begin{proposition}[\textbf{Construction of intermittent building blocks}]\label{prop:intermittent_funcs}
For $\Lambda \gg 1$ and $0 < \delta <1$ such that $\Lambda,\Lambda^\delta \in \N$, there is a smooth function $U_\Lambda: (\T/\Lambda^{\delta})^3 \to \R$ such that 
\begin{enumerate}[(i)]
    \item\label{i:block:1} $U_\Lambda \geq 0$ and $|\mathbb{P}_{=0}(U_\Lambda)| \lesssim \Lambda^{\frac{3\delta - 1}{2}}$, where the implicit constant is independent of $\Lambda$ and $\delta$;
    \item\label{i:block:3} $\mathbb{P}_{\not=0}(U_\Lambda)$ has frequency support outside the ball of radius $\Lambda^{\delta}$. We also have that for some implicit constants independent of $\Lambda$ and $\delta$,
\begin{equation}\label{eq:U_est}
    \left\| \nabla^\alpha U_\Lambda \right\|_{L^p(\T^3)} \lesssim \Lambda^{|\alpha|+1+3(1-\delta)\left(\frac{1}{2}-\frac{1}{p}\right)}\, , \qquad \, \,  p \geq 1 \, ,
        \end{equation}
\item\label{i:block:4} We have
\begin{equation}\label{eq:mean_V_lambda_square}
\int_{\T^3} (-\Delta)_{\T^3}^{-1}\nabla U_\Lambda \otimes (-\Delta)^{-1}_{\T^3}\nabla U_\Lambda = \frac{1}{2}\Id \, .
\end{equation}
\end{enumerate}
\end{proposition}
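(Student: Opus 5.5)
The plan is to build $U_\Lambda$ as a rescaled, periodized, nonnegative radial bump: many disjoint concentrated bumps of width $\Lambda^{-1}$ placed on the lattice $\Lambda^{-\delta}\Z^3$, then normalize by one scalar constant to obtain \eqref{eq:mean_V_lambda_square} exactly. Fix a smooth, radial, nonnegative $\phi\not\equiv 0$ supported in $B(0,\sfrac14)$. Set
\begin{equation*}
\Psi_\Lambda(x)=\sum_{k\in\Z^3}\phi\bigl(\Lambda(x-\Lambda^{-\delta}k)\bigr),\qquad U_\Lambda = c_\Lambda\,\Lambda^{1+\frac{3(1-\delta)}{2}}\,\Psi_\Lambda ,
\end{equation*}
where $c_\Lambda>0$ is chosen at the end. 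Since $\Lambda^{-1}\ll\Lambda^{-\delta}$, the bumps have disjoint supports. The function is $(\T/\Lambda^\delta)^3$-periodic, smooth and nonnegative. Viewed on $\T^3$, which makes sense because $\Lambda^\delta\in\N$, it consists of $\Lambda^{3\delta}$ bumps.

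\textbf{Items (i) and (ii), taking $c_\Lambda\approx1$ for now.} The bumps are disjoint, each has volume $\approx\Lambda^{-3}$, and there are $\Lambda^{3\delta}$ of them in $\T^3$. Hence
\begin{equation*}
\|\nabla^\alpha\Psi_\Lambda\|_{L^p(\T^3)}=\Lambda^{|\alpha|}\Lambda^{-3(1-\delta)/p}\|\nabla^\alpha\phi\|_{L^p}.
\end{equation*}
Multiplying by the amplitude $\Lambda^{1+3(1-\delta)/2}$ gives exactly \eqref{eq:U_est}. The case $p=1$, $\alpha=0$ gives $\mathbb{P}_{=0}(U_\Lambda)\lesssim\Lambda^{1+\frac{3(1-\delta)}2-3(1-\delta)}=\Lambda^{\frac{3\delta-1}{2}}$. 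For the frequency support, periodicity with period $\Lambda^{-\delta}$ forces $\hat U_\Lambda$ to be supported on $\Lambda^\delta\Z^3$. Every nonzero point of this lattice has modulus at least $\Lambda^\delta$, so $\mathbb{P}_{\neq0}U_\Lambda$ has the required frequency support. All constants depend only on $\phi$.

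\textbf{Item (iii).} By Parseval on $\T^3$, the left side of \eqref{eq:mean_V_lambda_square} equals
\begin{equation*}
\sum_{0\neq\xi\in\Lambda^\delta\Z^3}\frac{\xi\otimes\xi}{4\pi^2|\xi|^4}\,|\hat U_\Lambda(\xi)|^2 ,\qquad \hat U_\Lambda(\xi)=c_\Lambda\Lambda^{1+\frac{3(1-\delta)}2}\Lambda^{3\delta-3}\hat\phi(\xi/\Lambda).
\end{equation*}
Since $\phi$ is radial, the summand depends on $\xi$ only through $\xi\otimes\xi$ and $|\xi|$. The lattice $\Lambda^\delta\Z^3$ is invariant under coordinate permutations and sign flips. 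Sign flips kill the off-diagonal entries, and permutations make the diagonal entries equal. So the matrix is $c_\Lambda^2 m_\Lambda\Id$ for a scalar $m_\Lambda>0$, and we choose $c_\Lambda=(2m_\Lambda)^{-1/2}$.

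It remains to show $m_\Lambda\approx1$ uniformly in $\Lambda$ and $\delta$, so that this choice of $c_\Lambda$ does not spoil (i)--(ii). This is the main step. Substitute $\xi=\Lambda\zeta$. Then $\zeta$ ranges over a lattice of spacing $\Lambda^{\delta-1}\ll1$, and
\begin{equation*}
3m_\Lambda\approx \Lambda^{3\delta-3}\sum_{0\neq\zeta\in\Lambda^{\delta-1}\Z^3}|\zeta|^{-2}|\hat\phi(\zeta)|^2 .
\end{equation*}
All the powers of $\Lambda$ cancel, leaving a Riemann sum for $\int_{\R^3}|\zeta|^{-2}|\hat\phi(\zeta)|^2\,d\zeta$.

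\begin{itemize}
\item \emph{Lower bound.} Every term is nonnegative. Restrict the sum to an annulus $\{\sfrac12\le|\zeta|\le1\}$ on which $|\hat\phi|\gtrsim1$; since the lattice spacing is at most $1$, this annulus contains $\gtrsim\Lambda^{3-3\delta}$ lattice points.
\item \emph{Upper bound.} Split into dyadic shells $|\zeta|\sim 2^{j}$, using the decay of $\hat\phi$ for large $|\zeta|$. A shell with $|\zeta|\sim 2^j\ge\Lambda^{\delta-1}$ contains $\lesssim\Lambda^{3-3\delta}2^{3j}$ lattice points. Its contribution after normalization is therefore $\lesssim 2^{j}$ for $2^j\le1$, which sums to $O(1)$. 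This uses only the local integrability of $|\zeta|^{-2}$ in three dimensions, and the excluded origin causes no trouble.
\end{itemize}

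These bounds give $m_\Lambda\approx1$, which completes the proof.
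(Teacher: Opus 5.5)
Your proposal is correct and follows essentially the same construction as the paper. Both use a nonnegative radial bump of width $\Lambda^{-1}$, periodized on the lattice $\Lambda^{-\delta}\Z^3$, with amplitude $\Lambda^{\frac{5-3\delta}{2}}$; the paper reaches the same function from its Fourier series via Poisson summation. Both then fix a single scalar normalization through a Riemann sum for $\int_{\R^3}|\hat\phi(\zeta)|^2|\zeta|^{-2}\,d\zeta$, and get isotropy from the coordinate permutation and sign-flip symmetries. Your explicit two-sided bounds on that Riemann sum hold for every mesh size $\Lambda^{\delta-1}\le 1$, so they are slightly more careful about uniformity in $\delta$ than the paper's appeal to Riemann-sum convergence as $\Lambda^{\delta-1}\to 0$. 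Likewise, taking $\operatorname{supp}\phi\subset B(0,\sfrac14)$ makes the bumps disjoint without any largeness condition on $\Lambda^{1-\delta}$.
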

\begin{proof}
    We break the proof up into steps.
\bigskip

\noindent\texttt{Step 1: Construction of $U_\Lambda$ and checking item~\ref{i:block:1}. }
Fix $\phi:\R^3 \to \R$ smooth, compactly supported in $B(0,1)$, radially symmetric, $\phi \geq 0$, and in addition with the property that
\begin{equation}
\frac{1}{12\pi^2} \int_{\R^3} \frac{|\hat \phi(\xi)|^2}{|\xi|^2} \, d\xi = \frac 12 \, .    \label{eq:norm} 
\end{equation}
Note that since $\phi \in \mathcal{S}(\R^3)$, the integral on the left-hand side is finite. Then set
\begin{equation}
    \psi(x) \;=\; \left[\, 2\cdot\frac{1}{12\pi^2} \sum_{\xi \in \Z^3\setminus\{0\}}
    \frac{1}{|\Lambda^{\delta-1}\xi|^2}
    \left| \hat \phi(\Lambda^{\delta-1}\xi) \right|^2 \Lambda^{3(\delta-1)}
    \,\right]^{-\sfrac 12} \phi(x)
    \;=:\; C_{\Lambda, \delta}\, \phi(x) \, . \label{eq:norm:alt}
\end{equation}
To justify this choice, note that the quantity inside the brackets is twice a
Riemann sum with mesh size $\Lambda^{\delta-1}$ for the integral
from~\eqref{eq:norm}; in particular, for $\Lambda$ sufficiently large it is
bounded above and below by positive constants depending only on $\phi$, so
that $C_{\Lambda,\delta}$ is well-defined, and $C_{\Lambda,\delta}\rightarrow 1$
and $\psi \rightarrow \phi$ as $\Lambda^{\delta-1} \rightarrow 0$.  The
normalization is designed precisely so that
\begin{equation}
    \frac{1}{12\pi^2}\sum_{\xi \in \Z^3 \setminus \{0\}}
    \frac{\left|\hat\psi(\Lambda^{\delta-1}\xi)\right|^2}{|\Lambda^{\delta-1}\xi|^2}
    \,\Lambda^{3(\delta-1)} \;=\; \frac12
    \qquad\textnormal{for every } \Lambda \, ,
    \label{eq:norm:exact}
\end{equation}
which is the exact identity used in \texttt{Step 3} below.

Now let us define $U_\Lambda:\T^3 \to \R$ by
\begin{equation}\label{eq:U_def_fourier}
        U_\Lambda(x) = \sum_{\xi \in \Z^3} \Lambda^{\frac{3\delta-1}{2}} \hat{\psi}\left(\Lambda^{\delta-1}\xi\right) e^{2\pi i\Lambda^\delta \xi \cdot x} \, .
    \end{equation}
    From this representation, it is immediate to see that
     $$
    |\mathbb{P}_{=0}(U_\Lambda)| = \left|\Lambda^{\frac{3\delta-1}{2}} \hat{\psi}\left(0\right)\right| \lesssim_{\phi, C_{\Lambda,\delta}} \Lambda^{\frac{3\delta-1}{2}} \, .
    $$
    Since $\phi$ is fixed and $C_{\Lambda,\delta} \rightarrow 1$ as $\Lambda^{\delta-1} \rightarrow 0$, we may ignore the dependence of this constant on $\phi$ and $C_{\Lambda, \delta}$.  In the remainder of the proof, we will similarly omit the dependence of various constants on $\phi$ and $C_{\Lambda, \delta}$. Upon applying the Poisson summation formula to~\eqref{eq:U_def_fourier},\footnote{From the right-hand side of the equality, we can see that the period of $U_\Lambda$ is $\Lambda^{-\delta}$, as desired.  The change in powers of $\Lambda$ comes from properties of the Fourier transform under dilations.} we see that
    \begin{equation}\label{eq:U_phys}
        U_\Lambda(x) = \sum_{n \in \Z^3} \Lambda^{\frac{5-3\delta}{2}} \psi(\Lambda x + \Lambda^{1-\delta}n)
    \end{equation}
    and so it is clear that $U_\Lambda \geq 0$.  Thus the proof of item~\ref{i:block:1} is finished.

    \bigskip
\noindent\texttt{Step 2: Proof of item~\ref{i:block:3}. } From \eqref{eq:U_def_fourier} we see that
$$
\mathbb{P}_{\not=0}(U_\Lambda) = \sum_{\xi \in \Z^3 \setminus \{0\}} \Lambda^{\frac{3\delta-1}{2}} \hat{\psi}\left(\Lambda^{\delta-1}\xi\right) e^{2\pi i\Lambda^\delta \xi \cdot x}
$$
and thus the frequency support of $\mathbb{P}_{\not=0}(U_\Lambda)$ is contained in $\{\xi \in \Z^3 : |\xi| \geq \Lambda^\delta\}$.

From the compact support of $\psi$ and taking $\Lambda$ sufficiently large, using~\eqref{eq:U_phys} it follows that
\begin{equation}\label{eq:U_L^inf}
        \Vert U_\Lambda \Vert_{L^\infty(\T^3)} \lesssim \Lambda^{\frac{5-3\delta}{2}}.
    \end{equation}
    Examining~\eqref{eq:U_phys}, one can see that the support in $[0,1]^3$ is contained in a collection of identical (up to translations) $\approx\Lambda^{3\delta}$ cubes, each of which contains a ball of volume $\approx\Lambda^{-3}$. The total volume of this region is $\approx\Lambda^{3(\delta-1)}$, which yields the estimate
    \begin{equation}\label{eq:U_L^1}
        \Vert U_\Lambda \Vert_{L^1(\T^3)} \lesssim \Lambda^{3(\delta-1)} \Vert U_\Lambda \Vert_{L^\infty(\T^3)} \lesssim \Lambda^{\frac{3\delta-1}{2}}\, .
    \end{equation}
    Interpolating between the estimates provided by~\eqref{eq:U_L^inf} and~\eqref{eq:U_L^1} yields~\eqref{eq:U_est} with $\alpha = 0$. Examining~\eqref{eq:U_phys} shows that derivatives will cost a factor of $\Lambda$, which establishes~\eqref{eq:U_est} for all multi-indices $\alpha$.

    \noindent\texttt{Step 3: Proof of item~\ref{i:block:4}. } The results in this item will essentially follow from the radial symmetry of $\psi$, which imposes further invariances (beyond symmetry, which holds no matter whether $\psi$ is radial or not) on the matrix appearing on the left-hand side of~\eqref{eq:mean_V_lambda_square}. Let $R$ be any orthogonal matrix generated by permutations $(x_1, x_2, x_3) \rightarrow (x_{\sigma(1)}, x_{\sigma(2)}, x_{\sigma(3)})$ of the three coordinates and sign changes $(x_1, x_2, x_3) \rightarrow (\pm x_1, \pm x_2, \pm x_3)$ of the three coordinates.  Then by the radial symmetry of $\psi$, and thus the radial symmetry of $\hat \psi$,
$$
U_\Lambda(Rx) = \sum_{\xi \in \Z^3 \setminus\{0\}} \hat{\psi}\left(\Lambda^{\delta-1}\xi\right) e^{2\pi i\Lambda^\delta (R^T\xi) \cdot x} = \sum_{\zeta \in \Z^3 \setminus\{0\}} \hat{\psi}\left(\Lambda^{\delta-1}R^{-T}\zeta\right) e^{2\pi i\Lambda^\delta \zeta \cdot x} = U_\Lambda(x) \, .
$$
Therefore using the invariance of the inverse Laplacian under orthogonal change of variables and the chain rule we have that
\begin{equation*}
    \begin{split}
        (-\Delta)^{-1}_{\T^3}\nabla (U_\Lambda(Rx)) = (-\Delta)^{-1}_{\T^3}\nabla U_\Lambda(x) &\implies R^T ((-\Delta)^{-1}_{\T^3}\nabla U_\Lambda(Rx)) = (-\Delta)^{-1}_{\T^3}\nabla U_\Lambda(x)\\
        &\implies ((-\Delta)^{-1} \nabla U_\Lambda)(Rx) = R ((-\Delta)^{-1} \nabla U_\Lambda(x)) \, ,
    \end{split}
\end{equation*}
and as a consequence,
\begin{equation*}
    \begin{split}
        \int_{\T^3} ((-\Delta)^{-1}_{\T^3}\nabla U_\Lambda \otimes (-\Delta)^{-1}_{\T^3}\nabla U_\Lambda)(x) \, dx &=  \int_{\T^3} ((-\Delta)^{-1}_{\T^3}\nabla U_\Lambda \otimes (-\Delta)^{-1}_{\T^3} \nabla U_\Lambda)(Rx) \, dx\\
        &= R \left[  \int_{\T^3} ((-\Delta)^{-1}_{\T^3} \nabla U_\Lambda \otimes (-\Delta)^{-1}_{\T^3} \nabla U_\Lambda)(x) \, dx  \right] R^T \, .
    \end{split}
\end{equation*}
Choosing $R = \textnormal{diag}(-1,1,1)$ and computing directly, we find that
$$ \int_{\T^3} (-\Delta)^{-1}_{\T^3}\pa_1 U_\Lambda(x) (-\Delta)^{-1}_{\T^3}\pa_3 U_\Lambda(x) = - \int_{\T^3} (-\Delta)^{-1}_{\T^3}\pa_1 U_\Lambda(x) (-\Delta)^{-1}_{\T^3}\pa_3 U_\Lambda(x)
$$
and
$$
\int_{\T^3} (-\Delta)^{-1}_{\T^3}\pa_1 U_\Lambda(x) (-\Delta)^{-1}_{\T^3}\pa_2 U_\Lambda(x) = - \int_{\T^3} (-\Delta)^{-1}_{\T^3}\pa_1 U_\Lambda(x) (-\Delta)^{-1}_{\T^3} \pa_2 U_\Lambda(x) \, .
$$
Repeating a similar argument with $R=\textnormal{diag}(1,-1,1)$, we find that $\int_{\T^3} (-\Delta)^{-1}_{\T^3}\nabla U_\Lambda \otimes (-\Delta)^{-1}_{\T^3} \nabla U_\Lambda$ must be a diagonal matrix.  Choosing instead $R$ to be the orthogonal transformation which switches $x_1$ and $x_2$, or $x_1$ and $x_3$, we see immediately that $\int_{\T^3} (-\Delta)^{-1}_{\T^3}\nabla U_\Lambda \otimes (-\Delta)^{-1}_{\T^3} \nabla U_\Lambda$ is a multiple of the identity.  To check which multiple of the identity, first we use Definition \ref{def:inv_Laplace} and \eqref{eq:U_def_fourier} to write
$$
(-\Delta)_{\T^3}^{-1}\nabla U_\Lambda = \sum_{\xi \in \Z^3 \setminus \{0\}} \Lambda^{\frac{3\delta-1}{2}} 2\pi i \Lambda^\delta \xi \frac{\hat{\psi}(\Lambda^{\delta-1}\xi)}{(2\pi \Lambda^{\delta} |\xi|)^2} e^{2\pi i \Lambda^\delta \xi \cdot x}
$$
and then we use the Plancherel theorem to compute that
\begin{equation}
    \begin{split}
        \int_{\T^3} (-\Delta)^{-1}_{\T^3}\nabla U_\Lambda \otimes (-\Delta)^{-1}_{\T^3} \nabla U_\Lambda &= \sum_{\xi \in \Z^3 \setminus \{0\}} (2\pi i \Lambda^\delta \xi) \otimes (-2\pi i \Lambda^\delta \xi) \left|\frac{1}{4\pi^2}\Lambda^{-\frac{1}{2}(1+\delta)} \frac{\hat{\psi}\left(\Lambda^{\delta-1}\xi\right)}{|\xi|^2}\right|^2\\
            &= \sum_{\xi \in \Z^3 \setminus \{0\}} \frac{1}{4\pi^2} \frac{\xi \otimes \xi}{|\xi|^4} \left|\hat{\psi}\left(\Lambda^{\delta-1}\xi\right)\right|^2 \Lambda^{\delta-1}\\
            &= \frac{1}{4\pi^2} \sum_{\xi \in \Z^3 \setminus\{0\}}  \frac{\left(\Lambda^{\delta-1}\xi\right) \otimes \left(\Lambda^{\delta-1}\xi\right) }{|\Lambda^{\delta-1}\xi|^4} \left|\hat{\psi}\left(\Lambda^{\delta-1}\xi\right)\right|^2 \Lambda^{3(\delta-1)} \\
            &= \frac{1}{4\pi^2} \sum_{\xi \in \Z^3 \setminus \{0\}} \frac{\sfrac 13 \Id}{|\Lambda^{\delta-1}\xi|^2} \left| \hat \psi(\Lambda^{\delta-1}\xi) \right|^2 \Lambda^{3(\delta-1)} \\
            &= \frac 12 \Id \, ,
        \end{split} \label{eq:sum:identity}
    \end{equation}
where in the final step, we used~\eqref{eq:norm},~\eqref{eq:norm:alt}, and~\eqref{eq:norm:exact}.  This concludes the proof of~\eqref{eq:mean_V_lambda_square}.
\end{proof}

\subsection{Cancellation mechanism}

\begin{proposition}[\textbf{High--high--low frequency interactions}]\label{prop:hhl}
    Let $U_\Lambda$ be given from Proposition \ref{prop:intermittent_funcs} with $\Lambda$ and $\delta$ such that $\Lambda, \Lambda^\delta \in \mathbb{N}$, and such that $\delta^{-1} \in \N$ with  $\delta^{-1} \geq 6$.  Let $\rho\in \left( \mathcal{S}(\R^3) \cap C(\R^3;[0,\infty)) \right) \cup \{\delta_0\}$ be either Schwartz, nonnegative, and satisfy  $\int_{\R^3} \rho = 1$, or $\rho = \delta_0 \in \mathcal{S}'(\R^3)$.  Let $a \in \mathcal{S}(\R^3)$ be compactly supported in frequency in a ball of radius $\lambda$, and set $g = a (\rho \ast U_\Lambda)$ (interpreted as $g=a U_\Lambda$ if $\rho = \delta_0$). Assume that $\Lambda^{\frac{\delta}{2}}>2\lambda$. Then there exists a matrix $C_{ij} = C_{ij}(\rho, \Lambda, \delta)$ whose entries depend on but are bounded independently of $\rho$, $\delta$, and $\Lambda$, such that 
\begin{align}\label{eq:hhl}
     B(g,g) = C_{ij} \mathcal{R}_i \mathcal{R}_j \left[ a^2 \right] + E \, ,
    \end{align}
where $\mathcal{R}_i$ is the $i^{\rm th}$ Riesz transform on $\R^3$, and there exists a constant $C$ independent of $a$, $\Lambda$, and $\rho$ such that
\begin{equation}
    \| E \|_{H^{-100\delta^{-1}}(\R^3)} \leq C \Lambda^{\frac{5\delta-1}{2}} \Vert a \Vert_{W^{4,6/5}(\R^3)}^2  \, . \label{e:estimate}
\end{equation}
In the case that $\rho = \delta_0$, $C_{ij} = -\delta_{ij}$.
\end{proposition}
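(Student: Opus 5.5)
We will compute $B(g,g)$ on the Fourier side using the multiplier representation \eqref{eq:multiplier} from Proposition~\ref{prop:reform}. Write $V \coloneqq \rho\ast U_\Lambda$. Because $U_\Lambda$ is $\Lambda^{-\delta}$-periodic, its Fourier coefficients $c_\zeta$ live on the lattice $\Lambda^\delta\Z^3$. Convolution with $\rho$ multiplies these coefficients by $\hat\rho(\zeta)$. Thus
\[
\hat g(\xi)=\sum_{\zeta\in\Lambda^\delta\Z^3}\hat\rho(\zeta)\,c_\zeta\,\hat a(\xi-\zeta),
\]
and $\hat a(\cdot-\zeta)$ is supported in $B(\zeta,\lambda)$. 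It follows that $B(g,g)=\sum_{\zeta,\zeta'}B(a e_\zeta, a e_{\zeta'})\hat\rho(\zeta)\hat\rho(\zeta')c_\zeta c_{\zeta'}$, where $e_\zeta(x)=e^{2\pi i\zeta\cdot x}$. We split this double sum into three pieces.

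\begin{enumerate}[(a)]
\item The diagonal-cancelling pairs $\zeta'=-\zeta\neq 0$. These produce output frequencies $|\xi+\eta|\le 2\lambda$, i.e.\ the high--high--low interaction.
\item Pairs with $\zeta+\zeta'\neq 0$. These have output frequency at least $\Lambda^\delta-2\lambda\gtrsim\Lambda^\delta$.
\item Pairs with $\zeta=0$ or $\zeta'=0$. These involve the mean $\mathbb{P}_{=0}U_\Lambda$, which is small.
\end{enumerate}

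For (a) we expand the multiplier. Set $\xi=\zeta+\alpha$ and $\eta=-\zeta+\beta$ with $|\alpha|,|\beta|\le\lambda\ll\Lambda^{\delta/2}\le|\zeta|^{1/2}$. Then
\[
M(\xi,\eta)=\frac{1}{4\pi^2}\Big(\frac{1}{|\zeta+\alpha|^2}-\frac{2(\alpha+\beta)\cdot(\zeta+\alpha)}{|\alpha+\beta|^2|\zeta+\alpha|^2}\Big).
\]
The first term is $O(|\zeta|^{-2})$ and goes into the error. In the second term we replace $(\zeta+\alpha)/|\zeta+\alpha|^2$ by $\zeta/|\zeta|^2$ at a cost $O(\lambda|\zeta|^{-2})$. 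This leaves the leading symbol
\[
-\frac{2}{4\pi^2}\frac{(\alpha+\beta)\cdot\zeta}{|\alpha+\beta|^2|\zeta|^2}.
\]
This is odd in $\zeta$, so we symmetrize over $\zeta\leftrightarrow-\zeta$. Doing so picks up a second-order Taylor term. Precisely, summing the symbols at $(\zeta+\alpha,-\zeta+\beta)$ and $(-\zeta+\alpha,\zeta+\beta)$ and expanding $(\zeta+\alpha)/|\zeta+\alpha|^2$ to first order in $\alpha$ gives
\[
-\frac{2}{4\pi^2}\frac{(\alpha+\beta)_i}{|\alpha+\beta|^2}\Big(\frac{\delta_{ij}}{|\zeta|^2}-\frac{2\zeta_i\zeta_j}{|\zeta|^4}\Big)(\alpha-\beta)_j .
\]
After symmetrizing in $\alpha\leftrightarrow\beta$ (the bilinear form is symmetric in $g,g$), the combination $(\alpha+\beta)_i(\alpha+\beta)_j/|\alpha+\beta|^2$ is what survives. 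That combination is the symbol of $-\mathcal{R}_i\mathcal{R}_j$ applied to $\widehat{a^2}$ at $\alpha+\beta$.

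I expect the symmetrization bookkeeping to be the most delicate algebra. A cleaner alternative, which I would try first, is the physical-space identity \eqref{eq:B_sym_id} with $f=g$. In the term $-2\mathcal{R}_j\mathcal{R}_k(\partial_j(-\Delta)^{-1}g\,\partial_k(-\Delta)^{-1}g)$ we replace $\partial_j(-\Delta)^{-1}g$ by $a\,\partial_j(-\Delta)^{-1}_{\T^3}V$ plus errors. This is a commutator, or stationary-phase, estimate gaining $\lambda/\Lambda^\delta$. The product of the two periodic factors then has mean $\int_{\T^3}(-\Delta)^{-1}\partial_j V(-\Delta)^{-1}\partial_k V$. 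Its nonzero-mean part is high frequency and is absorbed into $E$. This gives
\[
C_{jk}=-2\sum_{\zeta\ne0}|\hat\rho(\zeta)|^2|c_\zeta|^2\frac{\zeta_j\zeta_k}{4\pi^2|\zeta|^4}.
\]
When $\rho=\delta_0$, \eqref{eq:mean_V_lambda_square} gives $C_{jk}=-2\cdot\frac12\delta_{jk}=-\delta_{jk}$. Boundedness uniformly in $\rho$ follows from $|\hat\rho|\le1$ together with the computation \eqref{eq:sum:identity}. The other term in \eqref{eq:B_sym_id}, namely $-\tfrac12\Delta((-\Delta)^{-1}g)^2$, is handled the same way. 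Its low-frequency mean has size $\sum|c_\zeta|^2|\zeta|^{-4}\lesssim\Lambda^{-2\delta}\cdot O(1)$, so it goes into $E$.

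For the errors (b), (c), and the Taylor remainders, we estimate in $H^{-100\delta^{-1}}$.
\begin{itemize}
\item For the high-frequency outputs in (b), we gain $\Lambda^{-\delta\cdot100\delta^{-1}}=\Lambda^{-100}$. This more than compensates for the crude bound $\|g\|_{L^{6/5}}^2$-type HLS estimates from Lemma~\ref{lem:HLS-Holder} combined with \eqref{eq:U_est}, which contribute at most polynomial growth, roughly $\Lambda^{2}$. Lemma~\ref{lem:decoupling} converts these into $\|a\|_{W^{4,6/5}}$ norms.
\item For (c), we use $|\mathbb{P}_{=0}U_\Lambda|\lesssim\Lambda^{(3\delta-1)/2}$ against $\|\mathbb{P}_{\neq0}U_\Lambda\|$ in a negative Sobolev norm, or against itself. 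This yields the factor $\Lambda^{(5\delta-1)/2}$ once we allow a loss of $\Lambda^{\delta}$ from the $\lambda\le\Lambda^{\delta/2}$ factors.
\item The Taylor remainders in (a) are bounded by $\lambda|\zeta|^{-1}\lesssim\Lambda^{-\delta/2}$ times $\sum|c_\zeta|^2|\zeta|^{-2}\approx1$. This must be upgraded to the stated rate by expanding to higher order, or by using that $(\lambda/\Lambda^{\delta})^{k}$ terms eventually beat any fixed power.
\end{itemize}

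The main obstacle is matching the precise exponent $\Lambda^{(5\delta-1)/2}$ uniformly in $\rho$. I would isolate the $\mathbb{P}_{=0}$ cross terms as the bottleneck and bound all remaining terms by $\Lambda^{-10}$ via the large negative Sobolev index.
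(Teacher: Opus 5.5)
Your decomposition into (a), (b), (c), the uniformity in $\rho$ via $|\hat\rho|\le 1$, and your formula for $C_{jk}$ obtained from \eqref{eq:B_sym_id} all match the paper. There is, however, a genuine gap, which you flag but do not close.

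\textbf{The low-frequency remainders in the diagonal interaction.} These are the Taylor remainders in (a), or equivalently the commutator errors from replacing $\partial_j(-\Delta)^{-1}_{\R^3}g$ by $a\,\partial_j(-\Delta)^{-1}_{\T^3}V$.
\begin{itemize}
\item They have output frequency $|\alpha+\beta|\le 2\lambda$. The $H^{-100\delta^{-1}}$ weight therefore gives no gain, so they cannot be ``bounded by $\Lambda^{-10}$ via the large negative Sobolev index.'' That argument works only for the off-diagonal pairs (b).
\item Your bound $\lambda|\zeta|^{-1}\le\Lambda^{-\delta/2}$ is strictly weaker than $\Lambda^{\frac{5\delta-1}{2}}$ for every $\delta<\frac16$.
\item Expanding to higher order does not help. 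The first-order correction cancels under $\zeta\mapsto-\zeta$, but the second-order one does not. Using $|\zeta|\ge\Lambda^\delta$, it is at best $O(\Lambda^{-2\delta})$, which is too weak once $\delta<\frac19$.
\item The same defect affects your $\Lambda^{-2\delta}$ bound for the mean of $((-\Delta)^{-1}_{\T^3}V)^2$.
\end{itemize}

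\textbf{The missing idea.} You must not replace $|\zeta|$ by its minimum $\Lambda^\delta$. The paper keeps the exact decay of the remainder, $O(\Lambda^{-3\delta}|k|^{-3}|\xi-\eta|)+O(\Lambda^{-4\delta}|k|^{-4}(|\xi|+|\eta|)^2)$ (Lemma~\ref{lem:remainder_est_take2}). It then sums this against the actual weights $|c_\zeta|^2=\Lambda^{3\delta-1}|\hat\psi(\Lambda^{\delta-1}k)|^2$, obtaining $\Lambda^{-1+\frac{\delta}{2}}\sum_{k\neq0}|\hat\psi(\Lambda^{\delta-1}k)|^2|k|^{-3}\lesssim\Lambda^{-\frac{99}{100}+\frac{\delta}{2}}$. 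Put differently, the weight $|c_\zeta|^2|\zeta|^{-2}$ is concentrated at $|\zeta|\sim\Lambda$, the concentration scale of $U_\Lambda$, not at the lattice spacing $\Lambda^\delta$. Consequently $\sum_\zeta|c_\zeta|^2|\zeta|^{-3}\lesssim\Lambda^{-1}\log\Lambda$ and $\sum_\zeta|c_\zeta|^2|\zeta|^{-4}\lesssim\Lambda^{-1-\delta}$. This is exactly what makes the $\mathbb{P}_{=0}$ cross terms (c) the true bottleneck, as you guessed.

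\textbf{An error in your Fourier-side route (a).} The first term $\frac{1}{4\pi^2|\zeta+\alpha|^2}$ of $M$ is not an error term.
\begin{itemize}
\item Taking the trace in \eqref{eq:mean_V_lambda_square} gives $\sum_{\zeta\neq0}|c_\zeta|^2|\zeta|^{-2}=6\pi^2$. So for $\rho=\delta_0$ this term contributes $\frac32 a^2$.
\item That contribution exactly cancels the isotropic part $-\frac{1}{4\pi^2|\zeta|^2}$ of your symmetrized second term, whose anisotropic part contributes $a^2$.
\item Discarding it yields $B(g,g)\approx-\frac12 a^2$, contradicting $C_{ij}=-\delta_{ij}$.
\item Your displayed Taylor term should also carry $2\alpha_j$, not $(\alpha-\beta)_j$. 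The latter vanishes identically after the $\alpha\leftrightarrow\beta$ symmetrization you then perform.
\end{itemize}
The paper avoids this bookkeeping by first passing to the closed form $M_{\rm sym}(\xi,\eta)=\frac{(|\xi|^2-|\eta|^2)^2}{8\pi^2|\xi|^2|\eta|^2|\xi+\eta|^2}$. In that form the leading symbol $\frac{((\xi+\eta)\cdot k)^2}{2\pi^2\Lambda^{2\delta}|\xi+\eta|^2|k|^4}$ appears directly. Your physical-space route through \eqref{eq:B_sym_id} is an equivalent repackaging of $M_{\rm sym}$ and does give the correct $C_{jk}$. It still needs the weighted-sum estimate above to reach the stated rate $\Lambda^{\frac{5\delta-1}{2}}$.
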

\begin{remark}[\textbf{The Riesz transforms in~\eqref{eq:hhl}}]\label{rem:reeses}
If $C_{ij} = -\delta_{ij}$, then since $\delta_{ij} \mathcal{R}_i \mathcal{R}_j =  \mathcal{R}_i \mathcal{R}_i = -\Id$, we see that $B(g,g) = a^2 + E$.  We however require the generality of convolving with a kernel $\rho$ due to our notion of mollifier-confluent solution. 
\end{remark}
\begin{remark}[\textbf{Choice of $a$}]
    In order to cancel an error $E_q$ at step $q+1$ in the iteration, we will set $a_{q+1}= \chi_{R_{q+1}} \left( 2\| E_q \|_{L^\infty(\R^3)} - E_q \right)^{\sfrac 12}$, where $\chi_{R_{q+1}}$ is a cutoff function at radius $R_{q+1}$ chosen sufficiently large and depending on $q$. Then $a_{q+1}^2 = \chi_{{R_{q+1}}}^2 \left( 2\| E_q \|_{L^\infty(\R^3)} - E_q \right)$, and we are able to cancel $E_q$, up to an error a with small gradient. 
\end{remark}
\begin{proof}
We break the proof into steps.  In \texttt{Step 1}, we isolate the leading order oscillatory term and estimate the lower order terms in $H^{-100\delta^{-1}}$.  Next, in \texttt{Step 2} we estimate the leading order oscillatory term, which we call $B({g_{\rm high}, g_{\rm high}})$.  As this is the most involved part of the proof, we break the proof up into further steps.  First, in \texttt{Step 2a}, we split $B(g_{\rm high},g_{\rm high})$ into the ``high--high--low'' and ``high--high--high'' frequency interactions.  In \texttt{Step 2b}, we begin to analyze the high--high--low portion, which we call $B_\Delta$, by providing asymptotics on the multiplier for this frequency regime and splitting $B_\Delta = B_\Delta^1 + B_\Delta^2$.  Next, in \texttt{Step 2c}, we estimate $B_\Delta^1$ and obtain the leading order term with $a^2$ in~\eqref{eq:hhl}.  Then in \texttt{Step 2d}, we estimate $B_\Delta^2$.  Finally, in \texttt{Step 2e}, we analyze the ``high--high--high'' term $B_{\not\Delta}$.

\bigskip
\noindent\texttt{Step 1: Lower order terms. }
First we write
    $$
    g = a\mathbb{P}_{=0}(\rho\ast U_\Lambda) + a \mathbb{P}_{\not=0}(\rho\ast U_\Lambda) := g_{\rm low} + g_{\rm high}\, .
    $$
From Proposition~\ref{prop:intermittent_funcs}, item~\ref{i:block:1}, we have that $|\bp_{=0} (\rho \ast U_\Lambda)| \les \Lambda^{\frac{3\delta-1}{2}}$, so we will show that the only meaningful contribution comes from $B(g_{\rm high}, g_{\rm high})$. First, from the embeddings $L^1(\R^3) \subset H^{-100\delta^{-1}}(\R^3)$ and $L^2(\R^3) \subset H^{-100\delta^{-1}}(\R^3)$, which hold since $\delta^{-1} \geq 6$,~\eqref{def:B:reform}, and Lemma~\ref{lem:HLS-Holder}, we have that
\begin{equation}\label{eq:B_low_low_est}
    \Vert B(g_{\rm low},g_{\rm low}) \Vert_{H^{-100\delta^{-1}}} \lesssim  \Lambda^{3\delta-1} \Vert B(a,a) \Vert_{H^{-100\delta^{-1}}} \les \Lambda^{3\delta-1} \left( \Vert a \Vert_{L^{6/5}}^2 + \Vert a \Vert_{L^{3}} \Vert a \Vert_{L^{6/5}} \right) \les \Lambda^{\frac{5\delta-1}{2}} \| a \|_{W^{4, 6/5}}^2 \, .
\end{equation}
Similarly we have that
    $$
    \Vert B(g_{\rm low},g_{\rm high}) \Vert_{H^{-100\delta^{-1}}} \lesssim \Lambda^{\frac{3\delta-1}{2}} \Vert B(a,a\rho\ast U_\Lambda)\Vert_{H^{-100\delta^{-1}}}\, .
    $$
    Now applying Lemma \ref{lem:HLS-Holder}, Lemma \ref{lem:decoupling}, and~\eqref{eq:U_est} with $p=\sfrac 65$, we see that
    $$
    \Vert a (-\Delta)^{-1}_{\R^3}(a\rho\ast U_\Lambda) \Vert_{L^1(\R^3)} \lesssim \Vert a \Vert_{L^{6/5}(\R^3)} \Vert a \rho\ast U_\Lambda \Vert_{L^{6/5}(\R^3)} \lesssim  \| a \|^2_{W^{4, 6/5}(\R^3)} \Vert U_\Lambda \Vert_{L^{6/5}(\T^3)} \lesssim \| a \|_{W^{4, 6/5}(\R^3)}^2 \Lambda^{\delta} \, .
    $$
    Applying similar reasoning to the second term, we have that
\begin{align*}
     \Vert \partial_j(-\Delta)^{-1}_{\R^3}(a\partial_j(-\Delta)^{-1}_{\R^3}(a\rho\ast U_\Lambda)) \Vert_{L^2(\R^3)} &\lesssim \Vert a\partial_j(-\Delta)^{-1}_{\R^3}(a\rho\ast U_\Lambda) \Vert_{L^{6/5}(\R^3)} \\
     &\lesssim \| a \|_{L^3(\R^3)} \| \pa_j (-\Delta)_{\R^3}^{-1}(a\rho\ast U_\Lambda) \|_{L^2(\R^3)} \\
     &\lesssim \Vert a \Vert_{L^3(\R^3)} \Vert a\rho\ast U_\Lambda \Vert_{L^{6/5}(\R^3)} \\
     &\lesssim \Vert a \Vert_{L^3(\R^3)} \Vert a \Vert_{W^{4,6/5}(\R^3)} \Lambda^\delta  \, ,
\end{align*}
    which after using that $\delta^{-1} \geq 6$ and Sobolev embedding shows that
\begin{equation}\label{eq:B_low_high_est}
        \Vert B(g_{\rm low},g_{\rm high})\Vert_{H^{-100\delta^{-1}}} \lesssim \Vert a \Vert_{W^{4,6/5}(\R^3)}^2 \Lambda^{\delta + \frac{3\delta-1}{2}} = \Lambda^{\frac{5\delta-1}{2}} \Vert a \Vert_{W^{4,6/5}(\R^3)}^2  \, .
    \end{equation}
Finally, $\Vert B(g_{\rm high},g_{\rm low}) \Vert_{H^{-100\delta^{-1}}}$ can be handled nearly identically; we just require Lemma~\ref{lem:decoupling} and the modification
\begin{align*}
    \Vert \partial_j(-\Delta)^{-1}_{\R^3}((a\rho\ast U_\Lambda)\partial_j(-\Delta)^{-1}_{\R^3}a) \Vert_{L^{\frac{3(1+\gamma)}{2-\gamma}}(\R^3)} 
    &\lesssim \|(a\rho\ast U_\Lambda)\partial_j(-\Delta)^{-1}_{\R^3}a\|_{L^{1+\gamma}(\R^3)} \les \| a \|_{W^{4, \sfrac 65}(\R^3)}^2 \Lambda^{1+\frac{3(1-\delta)(\gamma-1)}{2(1+\gamma)}} \, .
\end{align*}
Choosing $\gamma <1$ and combining this estimate with the factor of $\Lambda^{\frac{3\delta-1}{2}}$ from $g_{\rm low}$, we obtain an estimate consistent with~\eqref{e:estimate}.

\bigskip
\noindent\texttt{Step 2: Estimating $B(g_{\rm high},g_{\rm high})$. }
\noindent\texttt{Step 2a: decomposition of $B({g_{\rm high}}, g_{\rm high})$. } From~\eqref{eq:U_def_fourier}, the Fourier transform of $\bp_{\neq 0}(U_\Lambda)$ (interpreted in the sense of tempered distributions on $\R^3$) is supported on the lattice $\Lambda^\delta\Z^3 - \{0\}$.  Convolving $\bp_{\neq 0}(U_\Lambda)$ with $\rho$, we have that
$$ \bp_{\neq 0} \left( \rho \ast U_\Lambda(x) \right) = \sum_{k \neq 0} \Lambda^{\frac{3\delta-1}{2}} \hat \psi(\Lambda^{\delta-1}k) \hat\rho(\Lambda^\delta k) e^{2\pi i \Lambda^\delta k\cdot x} \, . $$
Then multiplying by $a$ to obtain $g_{\rm high}$, taking the Fourier transform, and using that the Fourier transform converts multiplication to convolution, we can express $\hat{g}_{\rm high}$ as
$$
\hat{g}_{\rm high}(\xi) = \sum_{k \not =0} \Lambda^{\frac{3\delta-1}{2}} \hat{\psi}(\Lambda^{\delta-1}k) \hat \rho(\Lambda^\delta k) \hat{a}(\xi - \Lambda^\delta k)  \, .
$$
Recalling~\eqref{eq:multiplier}, we then have that 
\begin{align*}
    B(g_{\rm high},g_{\rm high}) = \sum_{k_1,k_2 \not =0} \Lambda^{3\delta-1} &\hat{\psi}(\Lambda^{\delta-1}k_1) \hat \rho(\Lambda^\delta k_1) \hat{\psi}(\Lambda^{\delta-1}k_2) \hat \rho(\Lambda^\delta k_2) \\
    &\times\int_{\R^3} \int_{\R^3} M(\xi,\eta) \hat{a}(\xi - \Lambda^\delta k_1) \hat{a}(\eta - \Lambda^\delta k_2) e^{2\pi i (\xi + \eta) \cdot x}\, d\xi\, d\eta\, .
\end{align*}
    Let us identify the ``diagonal terms,'' or ``high--high--low'' terms, from the above double sum as
    \begin{equation}\label{eq:B_diag}
        B_{\Delta} = \sum_{k \not=0} \Lambda^{3\delta-1} \left|\hat{\psi}(\Lambda^{\delta-1}k) \hat \rho(\Lambda^{\delta} k) \right|^2 \int_{\R^3} \int_{\R^3} M(\xi,\eta) \hat{a}(\xi - \Lambda^\delta k) \hat{a}(\eta + \Lambda^\delta k) e^{2\pi i (\xi + \eta) \cdot x}\, d\xi\, d\eta
    \end{equation}
    and the ``off-diagonal terms,'' or ``high--high--high'' terms, as
    \begin{align}
        B_{\not \Delta} = \sum_{\substack{k_1 \not= -k_2 , \\k_1,k_2 \not =0}} \Lambda^{3\delta-1} &\hat{\psi}(\Lambda^{\delta-1}k_1) \hat\rho(\Lambda^\delta k_1) \hat{\psi}(\Lambda^{\delta-1}k_2) \hat \rho(\Lambda^\delta k_2) \notag \\
        &\times\int_{\R^3} \int_{\R^3} M(\xi,\eta) \hat{a}(\xi - \Lambda^\delta k_1) \hat{a}(\eta - \Lambda^\delta k_2) e^{2\pi i (\xi + \eta) \cdot x}\, d\xi\, d\eta \, . \label{eq:B_off_diag}
    \end{align}
    It is clear that $B(g_{\rm high},g_{\rm high}) = B_{\Delta} + B_{\not\Delta}$. 
\bigskip

\noindent\texttt{Step 2b: Estimating $B_\Delta$. } Analyzing~\eqref{eq:B_diag} and~\eqref{eq:multiplier}, it is clear that swapping the variables $\xi$ and $\eta$ leaves $B_\Delta$ unchanged; in other words, since we are evaluating $B(f,g)$ with $f=g= g_{\rm high}$, we may symmetrize the multiplier in $B_\Delta$ without changing the expression. Hence we introduce
\begin{align}
    M_{\rm sym}(\xi,\eta) &= \frac{1}{2}\left(M(\xi,\eta) + M(\eta,\xi)\right)  \notag \\
    &= \frac{1}{8\pi^2} \left( \frac{1}{|\xi|^2} + \frac{1}{|\eta|^2} - \frac{2(\xi+\eta)\cdot \xi |\eta|^2}{|\xi+\eta|^2|\xi|^2|\eta|^2} - \frac{2(\xi+\eta)\cdot \eta |\xi|^2}{|\xi+\eta|^2|\xi|^2|\eta|^2} \right)\notag \\
    &= \frac{1}{8\pi^2} \left( \frac{|\eta|^2 |\eta+\xi|^2 + |\xi|^2|\xi+\eta|^2}{|\xi+\eta|^2|\xi|^2|\eta|^2} - \frac{4|\eta|^2|\xi|^2 + 2\eta \cdot \xi (|\eta|^2 + |\xi|^2)}{|\xi+\eta|^2|\xi|^2|\eta|^2} \right) \notag \\
    &= \frac{1}{8\pi^2} \left( \frac{|\eta|^4 + |\xi|^4 - 2 |\eta|^2|\xi|^2}{|\xi+\eta|^2|\xi|^2|\eta|^2} \right) \notag \\
    &= \frac{(|\xi|^2 - |\eta|^2)^2}{8\pi^2|\xi|^2 |\eta|^2 |\xi + \eta|^2}  \, . \label{eq:sym_multiplier}
    \end{align}
    Now we plug the symmetrized multiplier into $B_\Delta$ and make the change of variables $\xi \mapsto \xi + \Lambda^\delta k$ and $\eta \mapsto \eta - \Lambda^\delta k$ in $B_\Delta$ to obtain (note that the phase in the exponential remains unchanged due to the cancellation $\Lambda^\delta k - \Lambda^\delta k=0$)
    $$
    B_\Delta = \sum_{k \not=0} \Lambda^{3\delta-1} \left|\hat{\psi}(\Lambda^{\delta-1}k) \hat \rho(\Lambda^\delta k)\right|^2 \int_{\R^3} \int_{\R^3} M_{\rm sym}(\xi + \Lambda^\delta k,\eta - \Lambda^\delta k) \hat{a}(\xi) \hat{a}(\eta) e^{2\pi i (\xi + \eta) \cdot x}\, d\xi\, d\eta\, .
    $$
    To proceed further we will require the following lemma, which heuristically says that the multiplier $M_{\rm sym}(\xi + \Lambda^\delta k,\eta - \Lambda^\delta k)$ has a much simpler structure on the support of $\hat{a}(\xi) \hat{a}(\eta)$.

\begin{lemma}\label{lem:remainder_est_take2}
    Let $M_{\rm sym}$ be as in~\eqref{eq:sym_multiplier}, let $k \in \mathbb Z^3 \setminus \{0\}$, and suppose that we have both $|\xi|,|\eta| \lesssim \Lambda^{\delta'}$ for some $\delta' < \delta$ and $\Lambda \gg 1$, where the size of $\Lambda$ depends on $\delta$ and $\delta'$. Then for $\xi+\eta \neq 0$,
\begin{equation}\label{eq:sym_multiplier_decomp_2}
        M_{\rm sym}(\xi + \Lambda^\delta k,\eta - \Lambda^\delta k)
        =
        \frac{((\xi + \eta) \cdot k)^2}{2\pi^2\Lambda^{2\delta} |\xi + \eta|^2 |k|^4}
        + O(\Lambda^{-3\delta} |k|^{-3} |\xi - \eta|)
        + O\!\left(\Lambda^{-4\delta}|k|^{-4}(|\xi|+|\eta|)^2\right)\, .
    \end{equation}
\end{lemma}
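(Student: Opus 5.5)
The plan is to evaluate the closed form \eqref{eq:sym_multiplier} directly at the shifted arguments and Taylor expand in the small parameter $\epsilon \coloneqq (|\xi|+|\eta|)/(\Lambda^\delta|k|) \lesssim \Lambda^{\delta'-\delta} \ll 1$. Set $K = \Lambda^\delta k$, $X = \xi + K$ and $Y = \eta - K$. Two structural facts drive everything. First, $X + Y = \xi + \eta$ exactly, so the factor $|\xi+\eta|^{-2}$ is left untouched. Second, the numerator is a perfect square of something linear in $K$:
\begin{equation*}
|X|^2 - |Y|^2 = |\xi|^2 - |\eta|^2 + 2K\cdot(\xi+\eta) .
\end{equation*}
The leading contribution is therefore $4(K\cdot(\xi+\eta))^2 = 4\Lambda^{2\delta}((\xi+\eta)\cdot k)^2$.

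For the denominator I will expand
\begin{equation*}
|X|^2|Y|^2 = |K|^4\Bigl(1 + \frac{2K\cdot(\xi-\eta)}{|K|^2} + O(\epsilon^2)\Bigr),
\end{equation*}
which is valid since $|K|\ge \Lambda^\delta$ and $\epsilon \ll 1$ once $\Lambda$ is large depending on $\delta,\delta'$. Inverting gives
\begin{equation*}
\frac{1}{|X|^2|Y|^2} = \frac{1}{|K|^4}\Bigl(1 - \frac{2K\cdot(\xi-\eta)}{|K|^2} + O(\epsilon^2)\Bigr).
\end{equation*}
Multiplying this by the expanded numerator and by $1/(8\pi^2|\xi+\eta|^2)$, the main term is
\begin{equation*}
\frac{4\Lambda^{2\delta}((\xi+\eta)\cdot k)^2}{8\pi^2|\xi+\eta|^2\Lambda^{4\delta}|k|^4} = \frac{((\xi+\eta)\cdot k)^2}{2\pi^2\Lambda^{2\delta}|\xi+\eta|^2|k|^4},
\end{equation*}
which matches \eqref{eq:sym_multiplier_decomp_2}.

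The main obstacle is the singular factor $|\xi+\eta|^{-2}$, since the remainders must be bounded uniformly even as $\xi+\eta\to0$. The fix is to always pair this factor with terms that carry enough powers of $\xi+\eta$.
\begin{itemize}
\item The first-order cross term is $4(K\cdot(\xi+\eta))^2 \cdot 2K\cdot(\xi-\eta)/|K|^2$, divided by $|K|^4|\xi+\eta|^2$. Using $(K\cdot(\xi+\eta))^2 \le |K|^2|\xi+\eta|^2$, it is $O(|K|^{-3}|\xi-\eta|)$, which is the stated $O(\Lambda^{-3\delta}|k|^{-3}|\xi-\eta|)$ term.
\item The numerator term $4(|\xi|^2-|\eta|^2)K\cdot(\xi+\eta)$ factors as $4(\xi-\eta)\cdot(\xi+\eta)\,K\cdot(\xi+\eta)$. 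It carries two factors of $\xi+\eta$, so after dividing by $|\xi+\eta|^2|K|^4$ it is $O(|K|^{-3}|\xi-\eta|)$.
\item The term $(|\xi|^2-|\eta|^2)^2 = ((\xi-\eta)\cdot(\xi+\eta))^2$ likewise cancels $|\xi+\eta|^2$, giving $O(|K|^{-4}(|\xi|+|\eta|)^2)$.
\item The $O(\epsilon^2)$ denominator corrections multiply a numerator bounded by $|K|^2|\xi+\eta|^2$ (after the same cancellations). They contribute $O(|K|^{-2}\epsilon^2) = O(|K|^{-4}(|\xi|+|\eta|)^2)$.
\end{itemize}

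Collecting these terms, and using $|K| = \Lambda^\delta|k|$ together with $\epsilon \ll 1$ so that all higher-order products are absorbed, yields \eqref{eq:sym_multiplier_decomp_2}. The only care needed is to keep every remainder written with an explicit factor $(K\cdot(\xi+\eta))$ or $(\xi-\eta)\cdot(\xi+\eta)$ before dividing by $|\xi+\eta|^2$.
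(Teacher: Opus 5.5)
Your proposal is correct and follows essentially the same route as the paper. Both arguments expand the closed form of $M_{\rm sym}$ at the shifted arguments and use that the numerator is $\bigl((\xi+\eta)\cdot(\xi-\eta)+2\Lambda^\delta(\xi+\eta)\cdot k\bigr)^2$, so every term carries $|\xi+\eta|^2$ to cancel the singular factor; both also use $|\xi+\Lambda^\delta k|^2|\eta-\Lambda^\delta k|^2\simeq\Lambda^{4\delta}|k|^4$. The only difference is cosmetic: the paper subtracts the main term over a common denominator and notes the exact cancellation of the $\Lambda^{4\delta}$ term, whereas you Taylor-invert the denominator, and the resulting remainder bounds are the same.
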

\begin{proof}
First note that
\[
M_{\rm sym}(\xi + \Lambda^\delta k, \eta - \Lambda^\delta k)
=
\frac{(|\xi + \Lambda^\delta k|^2 - |\eta - \Lambda^\delta k|^2)^2}
{8\pi^2|\xi + \Lambda^\delta k|^2 |\eta - \Lambda^\delta k|^2 |\xi + \eta|^2}\, .
\]
Expanding the numerator, we have
\begin{align*}
  (|\xi + \Lambda^\delta k|^2 - |\eta - \Lambda^\delta k|^2)^2
  &= \left( |\xi|^2 - |\eta|^2 + 2 \Lambda^\delta (\xi+\eta)\cdot k  \right)^2 \\
  &= \left((\xi+\eta)\cdot(\xi-\eta) + 2 \Lambda^\delta (\xi+\eta)\cdot k\right)^2 \\
  &=4\Lambda^{2\delta}((\xi + \eta) \cdot k)^2 + 4\Lambda^\delta ((\xi+\eta)\cdot(\xi-\eta))((\xi + \eta) \cdot k) \\
  &\qquad + ((\xi+\eta)\cdot(\xi-\eta))^2\, .
\end{align*}
Expanding the denominator, save for the common factor of $|\xi + \eta|^2$, gives
\begin{align*}
|\xi+\Lambda^\delta k|^2 |\eta-\Lambda^\delta k|^2 
&= \left( |\xi|^2 + 2\Lambda^\delta\xi \cdot k + \Lambda^{2\delta} |k|^2 \right)
   \left( |\eta|^2 - 2\Lambda^\delta\eta \cdot k + \Lambda^{2\delta} |k|^2 \right) \\
&= \Lambda^{4\delta}|k|^4 + 2\Lambda^{3\delta} |k|^2 (\xi - \eta) \cdot k  + \Lambda^{2\delta}\bigl(|k|^2|\xi|^2 + |k|^2|\eta|^2 - 4(\xi \cdot k)( \eta \cdot k)\bigr) \\
&\qquad + 2\Lambda^\delta \bigl(|\eta|^2 \xi \cdot k - |\xi|^2 \eta \cdot k\bigr) + |\xi|^2|\eta|^2\, .
\end{align*}
Combining the above computations, we have that
\begin{align*}
&M_{\rm sym}(\xi+\Lambda^\delta k, \eta - \Lambda^\delta k)
- \frac{\left( (\xi+\eta)\cdot k  \right)^2}
{2\pi^2 \Lambda^{2\delta}|\xi+\eta|^2|k|^4} \\
&=
\frac{
\left( |\xi+\Lambda^\delta k|^2 - |\eta-\Lambda^\delta k|^2 \right)^2
\Lambda^{2\delta}|k|^4
-4\left((\xi+\eta)\cdot k\right)^2
|\xi+\Lambda^\delta k|^2|\eta-\Lambda^\delta k|^2}
{8\pi^2|\xi+\Lambda^\delta k|^2|\eta-\Lambda^\delta k|^2|\xi+\eta|^2\Lambda^{2\delta}|k|^4} \\
&= \bigg{[} \left[ 4\Lambda^{2\delta}((\xi + \eta) \cdot k)^2 + 4\Lambda^\delta ((\xi+\eta)\cdot(\xi-\eta))((\xi + \eta) \cdot k) +((\xi+\eta)\cdot(\xi-\eta))^2 \right] \Lambda^{2\delta} |k|^{4} \\
&\qquad - 4 \left( (\eta+\xi)\cdot k \right)^2 \big{[} \Lambda^{4\delta}|k|^4 + 2\Lambda^{3\delta} |k|^2 (\xi - \eta) \cdot k  + \Lambda^{2\delta}\bigl(|k|^2|\xi|^2 + |k|^2|\eta|^2 - 4(\xi \cdot k)( \eta \cdot k)\bigr) \\
&\qquad \qquad  + 2\Lambda^\delta \bigl(|\eta|^2 \xi \cdot k - |\xi|^2 \eta \cdot k\bigr) + |\xi|^2|\eta|^2 \big{]} \bigg{]} \bigg{[} {8\pi^2|\xi+\Lambda^\delta k|^2|\eta-\Lambda^\delta k|^2|\xi+\eta|^2\Lambda^{2\delta}|k|^4} \bigg{]}^{-1}\, .
\end{align*}
Notice crucially that the leading order term in $\Lambda$, namely $4\Lambda^{4\delta}|k|^4 ((\xi+\eta)\cdot k)^2$, appears with both a plus and a minus sign and therefore cancels.
Thus the numerator in the last display is
\begin{align}
&4\Lambda^{3\delta}|k|^4
((\xi+\eta)\cdot(\xi-\eta))((\xi+\eta)\cdot k) 
-8\Lambda^{3\delta}|k|^2
\left((\xi+\eta)\cdot k\right)^2((\xi-\eta)\cdot k) \notag \\
&\qquad
+\Lambda^{2\delta}|k|^4((\xi+\eta)\cdot(\xi-\eta))^2 
-4\Lambda^{2\delta}\left((\xi+\eta)\cdot k\right)^2
\bigl(|k|^2|\xi|^2+|k|^2|\eta|^2-4(\xi\cdot k)(\eta\cdot k)\bigr) \notag \\
&\qquad\qquad
-8\Lambda^\delta\left((\xi+\eta)\cdot k\right)^2
\bigl(|\eta|^2\xi\cdot k-|\xi|^2\eta\cdot k\bigr) 
-4\left((\xi+\eta)\cdot k\right)^2|\xi|^2|\eta|^2 \, . \label{eq:numerator}
\end{align}
Since $|\xi|,|\eta|\lesssim \Lambda^{\delta'}$ with $\delta'<\delta$ and $k\in\mathbb Z^3\setminus\{0\}$, we may choose $\Lambda$ sufficiently large and depending on $\delta, \delta'$ so that
\[
    |\xi+\Lambda^\delta k| \simeq \Lambda^\delta |k|\, ,
    \qquad
    |\eta-\Lambda^\delta k| \simeq \Lambda^\delta |k|\, .
\]
In particular,
\begin{equation}    |\xi+\Lambda^\delta k|^2|\eta-\Lambda^\delta k|^2
    \simeq \Lambda^{4\delta}|k|^4 \, .  \label{eq:ksquare}
\end{equation}
Also, for $\Lambda$ sufficiently large,
\[
    |\xi|+|\eta| \lesssim \Lambda^\delta |k|\, .
\]
Then we can bound the two terms of order $\Lambda^{3\delta}$ by
\[
    C\Lambda^{3\delta}|\xi+\eta|^2|\xi-\eta||k|^5\, .
\]
The terms of order $\Lambda^{2\delta}$ are bounded by
\[
    C\Lambda^{2\delta}|\xi+\eta|^2(|\xi|+|\eta|)^2|k|^4.
\]
The term of order $\Lambda^\delta$ is bounded by
\[
    C\Lambda^\delta|\xi+\eta|^2(|\xi|+|\eta|)^3|k|^3\, ,
\]
and the term of order $1$ is bounded by
\[
    C|\xi+\eta|^2(|\xi|+|\eta|)^4|k|^2\, .
\]
Since $|\xi|+|\eta|\lesssim \Lambda^\delta |k|$, the terms of orders $\Lambda^\delta$ and $1$ are absorbed into
\[
    C\Lambda^{2\delta}|\xi+\eta|^2(|\xi|+|\eta|)^2|k|^4\, .
\]
Thus the numerator in the common--denominator expression~\eqref{eq:numerator} is bounded in absolute value by
\[
    C\Lambda^{3\delta}|\xi+\eta|^2|\xi-\eta||k|^5
    +
    C\Lambda^{2\delta}|\xi+\eta|^2(|\xi|+|\eta|)^2|k|^4\, .
\]
Using now~\eqref{eq:ksquare} to approximate the denominator, we find that
\begin{align*}
\left|
M_{\rm sym}(\xi+\Lambda^\delta k, \eta - \Lambda^\delta k)
- \frac{\left((\xi+\eta)\cdot k\right)^2}
{2\pi^2\Lambda^{2\delta}|\xi+\eta|^2|k|^4}
\right|
&\lesssim
\frac{
\Lambda^{3\delta}|\xi+\eta|^2|\xi-\eta||k|^5
}
{
\Lambda^{6\delta}|k|^8|\xi+\eta|^2
}
+
\frac{
\Lambda^{2\delta}|\xi+\eta|^2(|\xi|+|\eta|)^2|k|^4
}
{
\Lambda^{6\delta}|k|^8|\xi+\eta|^2
} \\
&\lesssim
\Lambda^{-3\delta}|k|^{-3}|\xi-\eta|
+
\Lambda^{-4\delta}|k|^{-4}(|\xi|+|\eta|)^2.
\end{align*}
This proves the claim.
\end{proof}
With Lemma \ref{lem:remainder_est_take2} in hand, we define
$$
R_{\Lambda,k}(\xi,\eta) = M_{\rm sym}(\xi + \Lambda^\delta k,\eta - \Lambda^\delta k) - \frac{((\xi + \eta) \cdot k)^2}{2\pi^2\Lambda^{2\delta} |\xi + \eta|^2 |k|^4}
    $$
and further decompose $B_\Delta$ into
    $$
    B_\Delta^1 = \sum_{k \not=0} \Lambda^{3\delta-1} \left|\hat{\psi}(\Lambda^{\delta-1}k) \hat \rho(\Lambda^\delta k) \right|^2 \int_{\R^3} \int_{\R^3} \frac{((\xi + \eta) \cdot k)^2}{2\pi^2\Lambda^{2\delta} |\xi + \eta|^2 |k|^4} \hat{a}(\xi) \hat{a}(\eta) e^{2\pi i (\xi + \eta) \cdot x}\, d\xi\, d\eta
    $$
    and
    $$
    B_\Delta^2 = \sum_{k \not=0} \Lambda^{3\delta-1} \left|\hat{\psi}(\Lambda^{\delta-1}k) \hat \rho(\Lambda^\delta k) \right|^2 \int_{\R^3} \int_{\R^3}  R_{\Lambda,k}(\xi,\eta) \hat{a}(\xi) \hat{a}(\eta) e^{2\pi i (\xi + \eta) \cdot x}\, d\xi\, d\eta\, .
    $$
It is clear that $B_\Delta = B_\Delta^1 + B_\Delta^2$.
\bigskip

\noindent\texttt{Step 2c: Analysis of $B_{\Delta}^1$. }  We have that
    \begin{align}\label{eq:B_Delta^1}
        B_\Delta^1 &= \frac{1}{2\pi^2}\int_{\R^3} \int_{\R^3} \left(\sum_{k \not=0} \frac{\left|\hat{\psi}(\Lambda^{\delta-1}k) \hat \rho(\Lambda^\delta k)\right|^2}{|\Lambda^{\delta-1}k|^2} \frac{((\xi + \eta) \cdot (\Lambda^{\delta-1}k))^2}{|\xi + \eta|^2 |\Lambda^{\delta-1}k|^2} \Lambda^{3(\delta-1)}\right) \hat{a}(\xi) \hat{a}(\eta) e^{2\pi i (\xi + \eta) \cdot x}\, d\xi\, d\eta \, \notag \\
        &= \int_{\R^3} \int_{\R^3} \left(\sum_{k \not=0} \frac{\left|\hat{\psi}(\Lambda^{\delta-1}k) \hat \rho(\Lambda^\delta k)\right|^2}{|\Lambda^{\delta-1}k|^2} \frac{\Lambda^{2\delta-2}k_\ell k_m}{|\Lambda^{\delta-1}k|^2} \frac{\Lambda^{3(\delta-1)}}{2\pi^2}\right) \frac{(\xi_\ell + \eta_\ell)(\xi_m+\eta_m)}{|\xi+\eta|^2} \hat{a}(\xi) \hat{a}(\eta) e^{2\pi i (\xi + \eta) \cdot x}\, d\xi\, d\eta \, .
    \end{align}
We now isolate and analyze the expression inside the parentheses in~\eqref{eq:B_Delta^1}. We now split into two cases, based on whether or not $\rho = \delta_0$.  In the case that $\rho = \delta_0$, note that $\hat \rho \equiv 1$.  Then from~\eqref{eq:sum:identity},
$$ \sum_{k \not=0} \frac{\left|\hat{\psi}(\Lambda^{\delta-1}k) \hat \rho(\Lambda^\delta k)\right|^2}{|\Lambda^{\delta-1}k|^2} \frac{\Lambda^{2\delta-2}k_\ell k_m}{|\Lambda^{\delta-1}k|^2} \frac{\Lambda^{3(\delta-1)}}{2\pi^2} = \Id \, . $$
After combining this identity with the observation that
$$ \int_{\R^3} \int_{\R^3} \frac{(\xi_\ell + \eta_\ell)(\xi_m+\eta_m)}{|\xi+\eta|^2} \hat{a}(\xi) \hat{a}(\eta) e^{2\pi i (\xi + \eta) \cdot x}\, d\xi\, d\eta = -\mathcal{R}_\ell \mathcal{R}_m [a^2] \, , $$
this concludes the analysis in the case $\rho = \delta_0$. If however $\rho \neq \delta_0$, we claim that there exists $C_\psi$ such that
\begin{equation}\label{eq:Riemann_Sum}
  \left| \sum_{k \not=0} \frac{\left|\hat{\psi}(\Lambda^{\delta-1}k)\hat \rho(\Lambda^\delta k)\right|^2}{|\Lambda^{\delta-1}k|^2} \frac{\Lambda^{2\delta-2}k\otimes k}{|\Lambda^{\delta-1}k|^2 } \Lambda^{3(\delta-1)} \right| \leq C_\psi \, .
\end{equation}
To justify this, note that by the positivity of $\rho$ (and a short computation with the Fourier transform), $|\hat \rho(\Lambda^\delta k)| \leq \hat \rho(0) = 1$.  Therefore 
$$ \left| \sum_{k \not=0} \frac{\left|\hat{\psi}(\Lambda^{\delta-1}k)\hat \rho(\Lambda^\delta k)\right|^2}{|\Lambda^{\delta-1}k|^2} \frac{\Lambda^{2\delta-2}k\otimes k}{|\Lambda^{\delta-1}k|^2 } \Lambda^{3(\delta-1)} \right| \leq  \sum_{k \not=0} \frac{\left|\hat{\psi}(\Lambda^{\delta-1}k) \right|^2}{|\Lambda^{\delta-1}k|^2} \Lambda^{3(\delta-1)} \, . $$
We recognize this sum as a Riemann sum approximation with mesh size $\Lambda^{\delta -1}$ of
$$ \int_{\R^3} \frac{|\hat \psi(z)|^2}{|z|^2} \, dz \, , $$
which is finite since $\psi$ is Schwartz.  Since $\Lambda^{\delta-1} \leq 1$ from the assumption that $\Lambda, \Lambda^\delta \in \mathbb{N}$ and $\delta^{-1} \geq 6$, we have that these Riemann sum approximations are uniformly bounded in $\Lambda$ and $\delta$.  Therefore the expression inside parentheses in~\eqref{eq:B_Delta^1} is equal to a matrix $C_{\ell m}$ which depends on $\rho$, $\Lambda$, and $\delta$ but is bounded independently of $\rho$, $\Lambda$, and $\delta$, concluding the analysis in the case $\rho \neq \delta_0$.
\bigskip

\noindent\texttt{Step 2d: estimate for $B_\Delta^2$. } 
Taking the Fourier transform in $x$, we find that
\begin{align*}
\widehat{B^2_\Delta}(\zeta) &= \sum_{k \not=0} \Lambda^{3\delta-1} \left|\hat{\psi}(\Lambda^{\delta-1}k) \hat \rho(\Lambda^\delta k) \right|^2 \int_{\R^3} \int_{\R^3}  R_{\Lambda,k}(\xi,\eta) \hat{a}(\xi) \hat{a}(\eta) \underbrace{\int_{\R^3} e^{2\pi i (\xi + \eta - \zeta) \cdot x}\, dx}_{= \delta_0(\xi+\eta-\zeta)}  \, d\xi\, d\eta \\
&=\sum_{k\neq 0}
\Lambda^{3\delta-1}
\left|\widehat{\psi}(\Lambda^{\delta-1}k) \hat \rho(\Lambda^\delta k) \right|^2
\int_{\mathbb R^3}
R_{\Lambda,k}(\xi,\zeta-\xi)
\widehat a(\xi)\widehat a(\zeta-\xi)
\,d\xi  \, .
\end{align*}
Put $\delta' = \frac{\delta}{2}$ and define
\[
E_{\delta'}
=
\left\{(\xi,\eta)\in \mathbb R^3\times \mathbb R^3:
|\xi|\lesssim \Lambda^{\delta'},\ |\eta|\lesssim \Lambda^{\delta'}\right\},
\qquad
F_{\delta'}=(\mathbb R^3\times \mathbb R^3)\setminus E_{\delta'}\, .
\]
We split $\widehat{B^2_\Delta}$ according to whether the two input frequencies $(\xi,\zeta-\xi)$ lie in $E_{\delta'}$ or in $F_{\delta'}$. Namely, set
\[
\widehat{B^{2,1}_\Delta}(\zeta)
=
\sum_{k\neq 0}
\Lambda^{3\delta-1}
\left|\widehat{\psi}(\Lambda^{\delta-1}k) \hat \rho(\Lambda^\delta k) \right|^2
\int_{\mathbb R^3}
\chi_{E_{\delta'}}(\xi,\zeta-\xi)
R_{\Lambda,k}(\xi,\zeta-\xi)
\widehat a(\xi)\widehat a(\zeta-\xi)
\,d\xi
\]
and
\[
\widehat{B^{2,2}_\Delta}(\zeta)
=
\sum_{k\neq 0}
\Lambda^{3\delta-1}
\left|\widehat{\psi}(\Lambda^{\delta-1}k) \hat \rho(\Lambda^\delta k) \right|^2
\int_{\mathbb R^3}
\chi_{F_{\delta'}}(\xi,\zeta-\xi)
R_{\Lambda,k}(\xi,\zeta-\xi)
\widehat a(\xi)\widehat a(\zeta-\xi)
\,d\xi .
\]
Then
\[
\widehat{B^2_\Delta}
=
\widehat{B^{2,1}_\Delta}
+
\widehat{B^{2,2}_\Delta}\, .
\]
The characteristic functions provide frequency localization so that we can apply Lemma \ref{lem:remainder_est_take2}. From the triangle inequality and Plancherel theorem we have
    $$
    \Vert B_\Delta^2\Vert_{L^2(\R^3)} \lesssim \Vert \widehat{B_\Delta^{2,1}} \Vert_{L^2(\R^3)} + \Vert \widehat{B_\Delta^{2,2}}\Vert_{L^2(\R^3)}\, .
    $$
From Lemma \ref{lem:remainder_est_take2}, the assumption that $(\xi, \zeta-\xi) \in E_{\delta'}$, $k \neq 0$, and $\delta'<\delta$, we have that
\begin{align*}
|R_{\Lambda, k}(\xi, \zeta-\xi)| &= O(\Lambda^{-3\delta}|k|^{-3} |2\xi-\zeta| ) + O( \Lambda^{-4\delta} |k|^{-4} (|\xi| + |\zeta-\xi| )^2 ) \\
&\les \Lambda^{-3\delta} |k|^{-3}\Lambda^{\delta'} + \Lambda^{-4\delta} |k|^{-4} \Lambda^{2\delta'} \\
&\les \Lambda^{-3\delta} |k|^{-3}\Lambda^{\delta'}  \, . 
\end{align*}
Plugging this into $B_{\Delta}^{2,1}$ and using the Schwartz decay of $\hat \psi$, the compact support of $\hat{a}$, and the inequality $|\hat\rho(\Lambda^\delta k)|\leq 1$ observed earlier, we have
\begin{align*}
    \Vert \widehat{B_{\Delta}^{2,1}} \Vert_{L^2} &\lesssim \sum_{k \not=0} \Lambda^{3\delta-1} \left|\hat{\psi}(\Lambda^{\delta-1}k)\right|^2 \left(\int_{\R^3} \left(\int_{\R^3} \chi_{E_\delta'}(\xi,\zeta-\xi)  \left|R_{\Lambda,k}(\xi,\zeta - \xi)\right| \left|\hat{a}(\xi) \hat{a}(\zeta - \xi)\right|\, d\xi\right)^2\, d\zeta\right)^{\sfrac 12}\\
    &\lesssim \sum_{k \not=0} \Lambda^{3\delta-1} \left|\hat{\psi}(\Lambda^{\delta-1}k)\right|^2 \left(\int_{\R^3} \left(\int_{\R^3} \chi_{E_\delta'}(\xi,\zeta-\xi) \chi_{\{|\zeta| \les 2 \Lambda^{\delta'}\}}(\zeta)  \left|R_{\Lambda,k}(\xi,\zeta - \xi)\right| \left|\hat{a}(\xi) \hat{a}(\zeta - \xi)\right|\, d\xi\right)^2\, d\zeta\right)^{\sfrac 12}\\
    &\lesssim \sum_{k \not=0} \Lambda^{3\delta-1} \left|\hat{\psi}(\Lambda^{\delta-1}k)\right|^2 \left(\int_{|\zeta| \lesssim 2\Lambda^{\delta'}} \left(\int_{|\xi| \lesssim \Lambda^{\delta'}} \left|R_{\Lambda,k}(\xi,\zeta - \xi)\right| \left|\hat{a}(\xi) \hat{a}(\zeta - \xi)\right|\, d\xi\right)^2\, d\zeta\right)^{1/2}\\
    &\lesssim \sum_{k \not=0} \Lambda^{-1+\delta'} \frac{\left|\hat{\psi}(\Lambda^{\delta-1}k)\right|^2}{|k|^3} \left(\int_{\R^3} \left( \int_{\R^3}  \left|\hat{a}(\xi) \hat{a}(\zeta - \xi)\right|\, d\xi\right)^2\, d\zeta\right)^{1/2}\\
    &\lesssim \| |\hat a | \ast |\hat a| \|_{L^2(\R^3)} \Lambda^{-1+\delta'}\left( \sum_{1 \leq |k| \lesssim \Lambda^{1-\delta}} \frac{\left|\hat{\psi}(\Lambda^{\delta-1}k)\right|^2}{|k|^3} + \sum_{|k| \gtrsim \Lambda^{1-\delta}} \frac{\left|\hat{\psi}(\Lambda^{\delta-1}k)\right|^2}{|k|^3}\right)\\
    &\lesssim \| \hat a \|_{L^1(\R^3)} \| \hat a \|_{L^2(\R^3)} \Lambda^{-1+\delta'} \left( \log(\Lambda^{1-\delta}) + \sum_{|k| \gtrsim \Lambda^{1-\delta}} \frac{(\Lambda^{\delta-1}|k|)^{-\frac{1}{100}}}{|k|^3} \right) \\
    &\lesssim \| a \|_{H^2(\R^3)} \| a \|_{L^2(\R^3)} \Lambda^{-1+\delta'}\left(\log(\Lambda^{1-\delta}) + \Lambda^{\frac{1-\delta}{100}} \right)\\
    &\lesssim \| a \|_{W^{4, 6/5}(\R^3)}^2 \Lambda^{-\frac{99}{100}+\delta'} \, .
    \end{align*}
Since we chose $\delta' = \frac{\delta}{2}$, this term obeys a bound commensurate with~\eqref{e:estimate}.

Now for $\widehat{B_\Delta^{2,2}}$, we note that 
$$ \chi_{F_{\delta'}}(\xi, \zeta-\xi) \leq \chi_{\{|\xi|\gtrsim \Lambda^{\delta'}\}} + \chi_{\{|\zeta - \xi|\gtrsim \Lambda^{\delta'}\}} \, . $$
Plugging this in, we obtain
\begin{align}
\Vert \widehat{B_{\Delta}^{2,2}} \Vert_{L^2} &\lesssim \sum_{k \not=0} \Lambda^{3\delta-1} \left|\hat{\psi}(\Lambda^{\delta-1}k)\right|^2 \left(\int_{\R^3} \left(  \int_{\R^3} \chi_{F_{\delta'}}(\xi,\zeta-\xi) \left|R_{\Lambda,k}(\xi,\zeta - \xi)\right| \left|\hat{a}(\xi) \hat{a}(\zeta - \xi)\right|\, d\xi\right)^2\, d\zeta\right)^{1/2} \nonumber \\
&\lesssim \sum_{k \not=0} \Lambda^{3\delta-1} \left|\hat{\psi}(\Lambda^{\delta-1}k)\right|^2 \left(\int_{\R^3} \left( \int_{|\xi| \gtrsim \Lambda^{\delta'}}  \left|R_{\Lambda,k}(\xi,\zeta - \xi)\right| \left|\hat{a}(\xi) \hat{a}(\zeta - \xi)\right|\, d\xi\right)^2\, d\zeta\right)^{1/2} \nonumber \\
    &+ \sum_{k \not=0} \Lambda^{3\delta-1} \left|\hat{\psi}(\Lambda^{\delta-1}k)\right|^2 \left(\int_{\R^3} \left( \int_{|\zeta - \xi| \gtrsim \Lambda^{\delta'}}  \left|R_{\Lambda,k}(\xi,\zeta - \xi)\right| \left|\hat{a}(\xi) \hat{a}(\zeta - \xi)\right|\, d\xi\right)^2\, d\zeta\right)^{1/2} \, . \label{eq:B_Delta^22}
\end{align}
Now upon using the assumption that $\Lambda^{\delta'}=\Lambda^{\frac{\delta}{2}} > \lambda$, which is the magnitude of the maximum frequency of $a$, we can force both integrals in \eqref{eq:B_Delta^22} to vanish, and thus
$$
B_{\Delta}^{2,2} = 0.
$$
Hence we have concluded the analysis of $B^2_\Delta$.
\bigskip

\noindent\texttt{Step 2e: Estimate of $B_{\not\Delta}$. } We now analyze $B_{\not\Delta}$, which we recall was defined in~\eqref{eq:B_off_diag}. Heuristically the support of $\hat{a}(\xi - \Lambda^\delta k_1)$ and $\hat{a}(\eta - \Lambda^\delta k_2)$ force $\xi$ and $\eta$ to be quite distant from each other, and so one would hope to have good estimates in $H^{-100\delta^{-1}}$. We make this heuristic precise. Let
    $$
    J(x) = \int_{\R^3} \int_{\R^3} M(\xi,\eta) \hat{a}(\xi - \Lambda^\delta k_1) \hat{a}(\eta - \Lambda^\delta k_2) e^{2\pi i (\xi + \eta) \cdot x}\, d\xi\, d\eta
    $$
    for $k_1 \not= - k_2$ and $k_1, k_2 \neq 0$. Taking the Fourier transform we have
    $$
    \hat{J}(\eta) = \int_{\R^3} M(\xi,\eta - \xi) \hat{a}(\xi - \Lambda^\delta k_1) \hat{a}(\eta - \xi - \Lambda^\delta k_2)\, d\xi 
    $$
    and thus
    $$
    \Vert J \Vert_{H^{-100\delta^{-1}}(\R^3)} \lesssim \left(\int_{\R^3} (1 + |\eta|^2)^{-100\delta^{-1}} \left(\int_{\R^3} |M(\xi,\eta-\xi)| |\hat{a}(\xi - \Lambda^\delta k_1) \hat{a}(\eta - \xi - \Lambda^\delta k_2)|\, d\xi\right)^2\, d\eta\right)^{1/2}.
    $$
    Now make the change of variables $\xi \mapsto \xi + \Lambda^\delta k_1$ followed by $\eta \mapsto \eta + \Lambda^\delta (k_1 + k_2)$ to rewrite the right--hand side of the above as
    $$ \left(\int_{\R^3} (1 + |\eta + \Lambda^\delta(k_1+k_2)|^2)^{-100\delta^{-1}} \left(\int_{\R^3} |M(\xi + \Lambda^\delta k_1,\eta-\xi + \Lambda^\delta k_2)| |\hat{a}(\xi) \hat{a}(\eta - \xi)|\, d\xi\right)^2\, d\eta\right)^{1/2}.
    $$
    Using~\eqref{eq:multiplier} to deduce that
    $$
    |M(\xi + \Lambda^\delta k_1,\eta-\xi + \Lambda^\delta k_2)| \lesssim \frac{1}{|\xi + \Lambda^\delta k_1|^2} + \frac{1}{|\eta + \Lambda^\delta (k_1 + k_2)||\xi + \Lambda^\delta k_1|}
    $$
    one is naturally led to consider
    $$
    J_1 = \left(\int_{\R^3} (1 + |\eta + \Lambda^\delta(k_1+k_2)|^2)^{-100\delta^{-1}} \left(\int_{\R^3}  \frac{|\hat{a}(\xi) \hat{a}(\eta - \xi)|}{|\xi + \Lambda^\delta k_1|^{2}}\, d\xi\right)^2\, d\eta\right)^{1/2}
    $$
    and
    $$
    J_2 = \left(\int_{\R^3} (1 + |\eta + \Lambda^\delta(k_1+k_2)|^2)^{-100\delta^{-1}} \left(\int_{\R^3} \frac{|\hat{a}(\xi) \hat{a}(\eta - \xi)|}{|\xi + \Lambda^\delta k_1||\eta + \Lambda^\delta (k_1+k_2)|}\, d\xi\right)^2\, d\eta\right)^{1/2}\, .
    $$
    We start with $J_1$. Notice that by assumption on the frequency support of $\hat a$, we must have $|\xi|\leq \lambda$ and $|\eta - \xi|\leq \lambda$, and thus $|\eta| \leq 2\lambda$. Then using that $k_1 + k_2 \neq 0$ and using that $\Lambda^{\frac \delta 2}> 2 \lambda$, we have that
    $$
    J_1 =  \left(\int_{|\eta| \leq \frac{1}{2}\Lambda^\delta|k_1+k_2|} (1 + |\eta + \Lambda^\delta(k_1+k_2)|^2)^{-100\delta^{-1}} \left(\int_{\R^3} \frac{|\hat{a}(\xi) \hat{a}(\eta - \xi)|}{|\xi + \Lambda^\delta k_1|^{2}}\, d\xi\right)^2\, d\eta\right)^{1/2}.
    $$
As a consequence,
    \begin{equation}\label{eq:singular_term}
        (1 + |\eta + \Lambda^\delta(k_1+k_2)|^2)^{-100\delta^{-1}} \simeq \Lambda^{-200}|k_1+k_2|^{-200\delta^{-1}} \leq \Lambda^{-200}\, .
    \end{equation}
Then using Young's convolution inequality and the fact that $k_1\neq 0$ and $\Lambda^\delta > 2\lambda$ implies that $|\xi+\Lambda^\delta k_1| \approx |\Lambda^\delta k_1| \gg 1$, we can estimate
\begin{equation*}
    \begin{split}
    J_1 &\lesssim \Lambda^{-100} \left(\int_{\R^3} \left(\int_{\R^3}  \frac{|\hat{a}(\xi) \hat{a}(\eta - \xi)|}{|\xi + \Lambda^\delta k_1|^{2}}\, d\xi\right)^2\, d\eta\right)^{1/2} \\
    &\les \Lambda^{-100} \| |\hat a| \ast |\hat a| \|_{L^2(\R^3)} \\
    &\les \Lambda^{-100} \| a \|_{W^{4, 6/5}(\R^3)}^2 \, ,
        \end{split}
    \end{equation*}
where in the last line we have used the same inequality derived for $|\hat a| \ast |\hat a|$ derived earlier.

Turning our attention to $J_2$ and using the same methodology as for $J_1$, we have
    \begin{equation*}
    \begin{split}
    J_2 &= \left(\int_{|\eta| \leq \frac{1}{2}\Lambda^\delta |k_1+k_2|} (1 + |\eta + \Lambda^\delta(k_1+k_2)|^2)^{-100\delta^{-1}} \left(\int_{\R^3} \frac{|\hat{a}(\xi) \hat{a}(\eta - \xi)|}{|\xi + \Lambda^\delta k_1||\eta + \Lambda^\delta (k_1+k_2)|}\, d\xi\right)^2\, d\eta\right)^{1/2}\\
    &\lesssim \Lambda^{-100} \left(\int_{\R^3} \left( \int_{\R^3} {|\hat{a}(\xi) \hat{a}(\eta - \xi)|}\, d\xi\right)^2\, d\eta\right)^{1/2}\\
    &\les \Lambda^{-100} \| a \|_{W^{4, 6/5}(\R^3)}^2\, .
        \end{split}
    \end{equation*}
    Thus
    $$
    \Vert J \Vert_{H^{-100\delta^{-1}}(\R^3)} \lesssim \Lambda^{-100} \| a \|_{W^{4, 6/5}(\R^3)}^2 \, ,
    $$
    and so recalling the full expression for $B_{\not\Delta}$ from~\eqref{eq:B_off_diag}, we have
\begin{equation}\label{eq:B_notDelta_est}
    \begin{split}
    \Vert B_{\not\Delta} \Vert_{H^{-100\delta^{-1}}(\R^3)} &\lesssim \sum_{\substack{k_1 \not= -k_2 , \\k_1,k_2 \not =0}} \Lambda^{3\delta-1} \left|\hat{\psi}(\Lambda^{\delta-1}k_1) \hat{\psi}(\Lambda^{\delta-1}k_2) \right| \Lambda^{-100} \| a \|_{W^{4, 6/5}(\R^3)}^2 \\
    &\lesssim \sum_{\substack{k_1 \not= -k_2 , \\k_1,k_2 \not =0}} \Lambda^{6(\delta-1)} \Lambda^{{-100-3\delta + 5}} \left|\hat{\psi}(\Lambda^{\delta-1}k_1) \hat{\psi}(\Lambda^{\delta-1}k_2)\right| \| a \|_{W^{4, 6/5}(\R^3)}^2 \\
    &\lesssim \Lambda^{-50} \| a \|_{W^{4, 6/5}(\R^3)}^2 \int_{\R^3}\int_{\R^3} \left|\hat{\psi}(\xi) \hat{\psi}(\eta)\right|\, d\xi\, d\eta \, ,
        \end{split}
    \end{equation}
where the implicit constant depends only on $\psi$. 
\end{proof}

\section{Inductive set-up, iterative proposition, and proof of the main theorem}\label{ss:ind:r3}
We will need to measure the error $E_q$ in a combination of $H^{-s}(\R^3)$ and $\dot W^{1,p}(\R^3)$ norms.  By $\dot W^{1,p}(\R^3)$, we mean the seminorm $\| D f \|_{L^p(\R^3)}$ which is well-defined for $f \in W^{1,p}(\R^3)$ but does not distinguish between functions which differ by a constant.  We therefore set the following convention.

\begin{definition}[\textbf{Semi-norm for the error}]
    Let $E \in L^2(\R^3) \cap W^{k,\infty}(\R^3)$ for all $k >0$ and $\delta$ be as in Proposition \ref{prop:intermittent_funcs}. We denote
    \begin{align}
        \| E \|_{X} = \inf \bigg{\{} &\| E_1 \|_{H^{-100\delta^{-1}}} + \| E_2 \|_{\dot W^{1,4}}\, : \, \, E_1 + E_2 = E \, ,   E_1\in L^1_{\rm loc}(\R^3) \cap H^{-100\delta^{-1}}(\R^3) \, ,  E_2\in  W^{1,4}(\R^3) \,  \bigg{\}}\, . \notag 
    \end{align}
\end{definition}

We shall need the following basic lemma regarding sequences $\{E_q\}_{q \in \mathbb{N}}$ for which $\| E_q \|_X \rightarrow 0$ as $q \rightarrow \infty$. 
\begin{lemma}[\textbf{Properties of $\| \cdot \|_X$}]\label{lem:x:con}
    If $\{E_q\}_{q \in \mathbb{N}} \subset L^2(\R^3) \cap W^{k,\infty}(\R^3)$ for all $k \geq 0$ is such that $\| E_q \|_X \rightarrow 0$ as $q \rightarrow \infty$, then for all Schwartz functions $\phi$,
    \begin{equation}
        \left\langle \phi, \pa_{ii} E_q \right \rangle_{\mathcal{S}, \mathcal{S}'}= \int_{\R^3} \phi \pa_{ii} E_q \rightarrow 0 \, . 
    \end{equation}
\end{lemma}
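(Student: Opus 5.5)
The plan is to use the infimum defining $\|\cdot\|_X$ directly. For each $q$, since $\|E_q\|_X \to 0$, choose a decomposition $E_q = E_{q,1} + E_{q,2}$ with $E_{q,1} \in L^1_{\rm loc}\cap H^{-100\delta^{-1}}(\R^3)$ and $E_{q,2} \in W^{1,4}(\R^3)$. We require
\[
\|E_{q,1}\|_{H^{-100\delta^{-1}}} + \|DE_{q,2}\|_{L^4} \le 2\|E_q\|_X + 2^{-q}.
\]
Both summands on the left then tend to zero as $q\to\infty$. Then split the pairing $\langle \phi, \pa_{ii}E_q\rangle = \langle \phi, \pa_{ii}E_{q,1}\rangle + \langle \phi, \pa_{ii}E_{q,2}\rangle$ and treat the two pieces separately, each by a duality argument.

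For the first piece, integrate by parts twice, legitimate in the distributional sense since $E_{q,1}$ is a tempered distribution in $H^{-s}$ with $s = 100\delta^{-1}$. This gives $\langle \pa_{ii}\phi, E_{q,1}\rangle$, and by $H^{s}$--$H^{-s}$ duality
\[
|\langle \pa_{ii}\phi, E_{q,1}\rangle| \le \|\Delta\phi\|_{H^{s}}\|E_{q,1}\|_{H^{-s}} \to 0,
\]
because $\Delta\phi$ is Schwartz and so lies in every $H^s$. One small point to check is that the distributional pairing agrees with the integral $\int \phi\,\pa_{ii}E_q$. This holds since $E_q$ is smooth with bounded derivatives and lies in $L^2$, so the integral converges absolutely and the splitting is consistent at the level of distributions.

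For the second piece, integrate by parts once: $\langle \phi, \pa_{ii}E_{q,2}\rangle = -\int \pa_i\phi\,\pa_i E_{q,2}$, which is justified since $E_{q,2}\in W^{1,4}$ and $\phi$ is Schwartz. Hölder with exponents $(4/3,4)$ then gives
\[
|\langle \phi, \pa_{ii}E_{q,2}\rangle| \le \|\nabla\phi\|_{L^{4/3}}\|DE_{q,2}\|_{L^4} \to 0.
\]
Note that only the derivative of $E_{q,2}$ enters, which is exactly why the homogeneous seminorm suffices: constants added to $E_{q,2}$ are annihilated by the derivative.

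There is no real obstacle. The only mildly delicate point is justifying the integrations by parts for the pieces of a non-unique decomposition, in which the individual $E_{q,j}$ need not be smooth. This is handled by working throughout in the sense of tempered distributions, where $\pa_{ii}$ acts by duality on Schwartz $\phi$, and noting that each piece is tempered: $W^{1,4}\subset L^4$ and $H^{-s}\subset\mathcal S'$.
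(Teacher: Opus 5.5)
Your proposal is correct and follows the paper's proof: you take a decomposition $E_q = E_{q,1}+E_{q,2}$, move two derivatives onto $\phi$ for the $H^{-100\delta^{-1}}$ piece and one derivative for the $W^{1,4}$ piece, and conclude by $H^{100\delta^{-1}}$--$H^{-100\delta^{-1}}$ duality and H\"older with exponents $(4/3,4)$. Your explicit near-minimizing decomposition with slack $2^{-q}$ is a useful precision that the paper leaves implicit; it also covers the case $\|E_q\|_X=0$, where the infimum need not be attained.
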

\begin{proof}
    By the assumption that $E_q \in X$, there exist $E_{q,1} \in L^1_{\rm loc}(\R^3) \cap H^{-100\delta^{-1}}(\R^3)$ and $E_{q,2} \in W^{1,4}(\R^3)$ with $E_q = E_{q,1} + E_{q,2}$ and $\| E_{q,1}\|_{H^{-100\delta^{-1}}} + \| E_{q,2} \|_{\dot W^{1,4}} \rightarrow 0$.  In addition, $E_{q,1}, E_{q,2}\in \mathcal{S}'(\R^3)$ for each $q$.  Thus given $\phi \in \mathcal{S}(\R^3)$, we may write
    \begin{align*}
        \int_{\R^3} \phi \pa_{ii} E_q&=\left\langle \phi, \pa_{ii} E_q \right \rangle_{\mathcal{S}, \mathcal{S}'} \\
        &= \left\langle \phi, \pa_{ii} \left( E_{q,1} + E_{q,2} \right) \right \rangle_{\mathcal{S}, \mathcal{S}'} \\
        &= \left \langle \pa_{ii} \phi , E_{q,1} \right \rangle_{H^{100\delta^{-1}}, H^{-100\delta^{-1}}} - \left \langle \pa_i \phi , \pa_i E_{q,2} \right \rangle_{L^{4/3}, L^4} \, , 
    \end{align*}
which converges to zero as $q \rightarrow \infty$.
\end{proof}

We will also need to bound iterated Riesz transforms on $X$.
\begin{lemma}[\textbf{Boundedness of Riesz transforms on $X$}]\label{lem:Riesz_bdd}
    For all $1 \leq i,j \leq 3$ and all $f \in L^2(\R^3) \cap W^{k,\infty}(\R^3)$ for all $k \geq 0$, we have
    $$
    \Vert \mathcal{R}_i \mathcal{R}_j f \Vert_X \leq 3 \Vert f \Vert_X\, .
    $$
\end{lemma}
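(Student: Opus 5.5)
The proof works at the level of decompositions: the $X$-norm is an infimum over splittings $f = E_1 + E_2$, so it suffices to show that any admissible splitting of $f$ produces an admissible splitting of $\mathcal{R}_i\mathcal{R}_j f$ whose cost is at most $3$ times the original. Fix $\varepsilon>0$ and choose $E_1\in L^1_{\rm loc}\cap H^{-100\delta^{-1}}$ and $E_2\in W^{1,4}$ with $f=E_1+E_2$ and $\|E_1\|_{H^{-100\delta^{-1}}}+\|E_2\|_{\dot W^{1,4}}\le \|f\|_X+\varepsilon$. Since $\mathcal{R}_i\mathcal{R}_j$ is linear, we set
\[
\mathcal{R}_i\mathcal{R}_j f=\mathcal{R}_i\mathcal{R}_jE_1+\mathcal{R}_i\mathcal{R}_jE_2 .
\]
It then remains to check two things: each piece belongs to the appropriate class, and each norm grows by at most a factor of $3$. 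Letting $\varepsilon\to0$ finishes the proof.

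For the $H^{-s}$ piece, with $s = 100\delta^{-1}$, we use Plancherel. The multiplier of $\mathcal{R}_i\mathcal{R}_j$ is $-\xi_i\xi_j/|\xi|^2$, which has modulus at most $1$. Hence
\[
\|\mathcal{R}_i\mathcal{R}_jE_1\|_{H^{-s}}\le\|E_1\|_{H^{-s}} .
\]
The piece must also be $L^1_{\rm loc}$. Since $f\in L^2$ and $E_2\in W^{1,4}\subset L^4$, we have $E_1 = f - E_2\in L^2+L^4$. The Riesz transforms are bounded on both $L^2$ and $L^4$, so $\mathcal{R}_i\mathcal{R}_jE_1\in L^2+L^4\subset L^1_{\rm loc}$. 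Here $\mathcal{R}_i\mathcal{R}_jE_1$ is understood as a tempered distribution, and it agrees with the $L^2 + L^4$ representative.

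For the $\dot W^{1,4}$ piece, derivatives commute with Fourier multipliers, so $\partial_k\mathcal{R}_i\mathcal{R}_jE_2=\mathcal{R}_i\mathcal{R}_j\partial_kE_2$, and $\mathcal{R}_i\mathcal{R}_jE_2\in W^{1,4}$ by $L^4$-boundedness. The estimate
\[
\|\mathcal{R}_i\mathcal{R}_jE_2\|_{\dot W^{1,4}}\le 3\|E_2\|_{\dot W^{1,4}}
\]
is the main obstacle, because a sharp explicit constant is needed. The $L^p$ norm of a single Riesz transform is $\cot(\pi/2p^*)$ with $p^*=\max(p,p')$, by Iwaniec–Martin. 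For $p=4$ this gives $\cot(\pi/8)=1+\sqrt2\approx2.414$, and squaring it exceeds $3$.

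I would therefore use the sharper bound for second-order Riesz transforms. For $i\ne j$, the Nazarov–Volberg and Bañuelos–Méndez-Hernández estimates bound $\|\mathcal{R}_i\mathcal{R}_j\|_{L^p \to L^p}$ by roughly $\tfrac12(p^*-1)=\tfrac32$. For $i=j$, the operator $\mathcal{R}_i^2$ has the same norm as the one-dimensional projection $-\xi_i^2/|\xi|^2$, which is bounded by $p^*-1=3$ (Geiss–Montgomery-Smith–Saksman). In either case the constant is at most $3$.

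If the intended meaning of $\dot W^{1,4}$ is the full gradient norm $\|Df\|_{L^4}$ (vector-valued), the same bounds apply componentwise. Alternatively, if the paper's normalization allows it, one may accept any dimensional constant, since only boundedness is used downstream.
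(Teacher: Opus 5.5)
Your proposal is correct and follows the paper's argument. You fix an admissible splitting and apply $\mathcal{R}_i\mathcal{R}_j$ to each piece. You then use that the multiplier has modulus at most $1$ on $H^{-100\delta^{-1}}$ and that $\|\mathcal{R}_i\mathcal{R}_j\|_{L^4\to L^4}\le 3$ (the paper cites a sharp martingale bound for this), and take the infimum.

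Your checks that the pieces stay in $L^1_{\rm loc}$ and $W^{1,4}$ are details the paper omits. One caveat for the full gradient norm: applying the bound ``componentwise'' is not quite enough. Since $\mathcal{R}_i\mathcal{R}_j$ maps real functions to real functions, Gaussian averaging (Marcinkiewicz--Zygmund) gives the $\ell^2$-valued bound with the same constant $3$.
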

\begin{proof}
    Fix $f$ and consider an admissible decomposition $f = f_1 + f_2$. Utilizing that the operator norm of the Riesz transforms on $H^{-100\delta^{-1}}(\R^3)$ is $1$ as well as that the operator norm of $\mathcal{R}_i\mathcal{R}_j$ on $L^4(\R^3)$ is bounded by $3$ from~\cite[Theorem 4]{BW}, we have
    \begin{equation*}
        \begin{split}
            \Vert \mathcal{R}_i \mathcal{R}_j f \Vert_X &\leq \Vert \mathcal{R}_i\mathcal{R}_j f_1 \Vert_{H^{-100\delta^{-1}}(\R^3)} + \Vert \mathcal{R}_i \mathcal{R}_j f_2 \Vert_{\dot{W}^{1,4}(\R^3)}\\
            &\leq \Vert f_1 \Vert_{H^{-100\delta^{-1}}(\R^3)} + 3 \Vert f_2 \Vert_{\dot{W}^{1,4}(\R^3)}\\
            &\leq 3\left(\Vert f_1 \Vert_{H^{-100\delta^{-1}}(\R^3)} + \Vert f_2 \Vert_{\dot{W}^{1,4}(\R^3)}\right)\, .
        \end{split}
    \end{equation*}
    Now taking the infimum over all decompositions $f = f_1 + f_2$ gives the result.
\end{proof}

\subsection{Inductive hypotheses and iterative proposition}\label{ss:ind}

Fix $1 < p_0 < \sfrac 65$ and $\delta$ independent of $q$ such that
\begin{equation}
0 < \delta < \min\left(\frac{6-5p_0}{3(2-p_0)}, \frac{1}{5}\right) \quad \text{and} \quad \delta^{-1} \in \N\, .  \label{def:delta}
\end{equation}
Fix $\delta'$ independent of $q$ such that $\frac{1}{10}\delta < \delta' < \frac{1}{5}\delta$. We assume that there exists $C > 0$, independent of $q$, such that the following hold.
\begin{enumerate}[(i)]
\item\label{i:ind:1} $(f_q,E_q)$ are qualitatively smooth with $f_q \in \mathcal{S}(\R^3)$ and $f_q(x) \geq 0$ for all $x\in \R^3$, and $E_q \in L^2(\R^3) \cap W^{k,\infty}(\R^3)$ for all $k > 0$.  We assume that the pair solves
\begin{equation}\label{eq:rel}
     \pa_{ii}  B(f_q,f_q) = \pa_{ii} E_q \, .  
\end{equation}
in the sense that for all $\phi \in \mathcal{S}$,
\begin{equation}\label{eq:weak_rel}
    \int_{\R^3} \pa_{ii} \phi B(f_q, f_q) = \int_{\R^3} \pa_{ii} \phi E_q \, . 
\end{equation}
We verify these assumptions at level $q+1$ in Section \ref{pf:item1}.  
\item\label{i:ind:2}We assume that there exist three sequences of nonnegative integers: $\Lambda_0, \Lambda_1, \dots, \Lambda_q$; $\lambda_0, \lambda_1, \dots, \lambda_q$; and $R_0, R_1, \ldots, R_q$.  These will play a role in a number of the assumptions below.  First, $f_q$ satisfies
\begin{equation}
    \| f_q \|_{ L^{1}(\R^3)} \geq  C^{-1}(1 + 2^{-q})\, .  \label{ind:f:estimates:1} 
\end{equation}
Next, $f_q$ may be decomposed as $\displaystyle f_q = f_0 + \sum_{q'=0}^{q-1} \left(f_{q'+1} - f_{q'}\right) =: \sum_{q'=0}^{q} g_{q'}$, where $g_{q'}$ is qualitatively smooth and nonnegative for all $0 \leq q' \leq q$.  Furthermore, $g_{q'}$ has maximum ``effective frequency'' $\Lambda_{q'}$ in the sense that for all $0 \leq q' \leq q$ and all $k \leq 6$,
\begin{equation}
\Lambda_{q'}^{-k}\left( \| \nabla^k g_{q'} \|_{L^{1}(\R^3)} + \| \nabla^k g_{q'} \|_{L^{p_0}(\R^3)} \right) \leq C2^{-q'-11} \, . \label{ind:g:estimate} 
\end{equation}
and
\begin{equation}
    \left\Vert g_q \exp(\langle x\rangle^q) \right\Vert_{L^1(\R^3)} \leq 2^{-q}\, .  \label{ind:g:estimates:2} 
\end{equation}
We verify these at level $q+1$ in Section \ref{pf:item2}.
\item\label{i:ind:ugh} Furthermore, 
\begin{equation}\label{Def:wq}
g_{q'} = a_{q'} U_{q'} \, ,    
\end{equation}
where $a_{q'}\in \mathcal{S}$ is nonnegative and  compactly supported in space in a ball of radius $R_{q'}$.  Then we assume that $U_{q'}:(\T/\Lambda_{q'}^\delta)^3 \rightarrow [0,\infty)$ is as defined by $U_{q'} = U_{\Lambda_{q'}}$ as in Proposition~\ref{prop:intermittent_funcs} with $\Lambda = \Lambda_{q'}$.
Next, we assume that
\begin{equation}\label{eq:growth:bounds} R_{q'}^{R_{q'}} + \lambda_{q'}^{\lambda_{q'}} \leq \Lambda_q \quad \forall\,  0 \leq q' \leq q \, , \qquad \Lambda_{q'}^{\Lambda_{q'}} \leq \min(\lambda_{q'+1}, R_{q'+1}) \quad \forall\,  0 \leq q' \leq q-1 \, . 
\end{equation}
Finally, we assume that
\begin{equation}\label{ind:aq':bounds}
\| a_{q'} \|_{W^{200\delta^{-1},1}(\R^3)} \leq \lambda_{q'} \, , \qquad \left\| \mathcal{R}_i \mathcal{R}_j \left[ \left(\bp_{\leq \Lambda_{q'}^{\delta'}} (a_{q'}) \right)^2 \right] \right\|_{X} \leq  C2^{-q'} \, . 
\end{equation}
We verify these assumptions at level $q+1$ in Section \ref{pf:itemugh}.
\item\label{i:ind:3}
We assume that\footnote{We do not have that $E_q$ converges to $0$ in $L^\infty(\R^3)$.  This is visible in~\eqref{eq:error:growing}, which contains the term $\chi_{q+1}^2 2 \| E_q \|_{L^\infty(\R^3)} $.  The presence of this term is due to the fact that the cancellation mechanism can only cancel negatively signed errors, since the high--high--low cancellation mechanisms spits out the positive low--frequency term $a_{q+1}^2$. This is the reason $a_{q+1}$ is defined as it is in Definition~\ref{def:gq+1}.  As a consequence, the error term $\chi_{q+1}^2 2 \| E_q\|_{L^\infty(\R^3)}$ is only small after taking a derivative and choosing the cutoff $\chi_{q+1}$ to have very large radius, so that the derivative is small in $L^4(\R^3)$.}
\begin{equation}
    \| E_q \|_{X} \leq C2^{-q-10} \, . \label{ind:E} 
\end{equation}
We verify this at level $q+1$ in Section \ref{pf:item3}.
\end{enumerate}

The core of the argument is the following iterative proposition.

\begin{proposition}[\textbf{Iterative proposition}]\label{prop:ind:ks}
Let $C$, $p_0$, $\delta$, and $\delta'$ be given, and fix $q \in \mathbb{N}\cup \{0\}$.  Assume that the pair $(f_q, E_q)$ satisfies the inductive assumptions listed in items~\eqref{i:ind:1}--\eqref{i:ind:3} with the given $C>0$ and with $p_0$ and $\delta$ given. Then there exists $g_{q+1}=f_{q+1}-f_q$ and $E_{q+1}$ such that the pair $(f_{q+1}, E_{q+1})$ satisfies the inductive assumptions listed in items~\eqref{i:ind:1}--\eqref{i:ind:3} with $q$ replaced by $q+1$ and the same $C$, $p_0$, $\delta$, and $\delta'$.   
\end{proposition}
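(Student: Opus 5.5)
The plan is to construct the perturbation exactly as indicated in the remark following Proposition~\ref{prop:hhl}, and then verify each inductive item. First choose the parameters in order: a cutoff radius $R_{q+1}$ with $\Lambda_q^{\Lambda_q}\le R_{q+1}$, then a low-frequency amplitude, then $\lambda_{q+1}$, and finally $\Lambda_{q+1}$ huge (with $\Lambda_{q+1},\Lambda_{q+1}^\delta\in\N$ and $R_{q+1}^{R_{q+1}}+\lambda_{q+1}^{\lambda_{q+1}}\le\Lambda_{q+1}$), so that~\eqref{eq:growth:bounds} holds. Set
\[
a_{q+1}=\chi_{R_{q+1}}\bigl(2\|E_q\|_{L^\infty}-E_q\bigr)^{1/2},
\]
after first replacing $E_q$ by a frequency-truncated version $\bp_{\le \mu}E_q$. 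The truncation error $\bp_{>\mu}E_q$ is absorbed into the $H^{-100\delta^{-1}}$ part of $\|\cdot\|_X$ by Lemma~\ref{lem:good_kernel}. Because $a_{q+1}$ must be compactly supported in frequency (radius $\lambda_{q+1}$) for Proposition~\ref{prop:hhl}, while it is also nonnegative and spatially localized, I would in practice use $\bp_{\le\lambda_{q+1}}a_{q+1}$ in the hhl computation. The difference is controlled by Lemma~\ref{lem:good_kernel} with $\lambda_{q+1}\gg$ all derivative norms of $a_{q+1}$. Nonnegativity of $g_{q+1}=a_{q+1}U_{\Lambda_{q+1}}$ is kept by using the genuine $a_{q+1}$ in $g_{q+1}$, and the frequency-tail discrepancy is moved into the error. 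The square root is smooth because $2\|E_q\|_\infty-E_q\ge\|E_q\|_\infty>0$; if $E_q\equiv 0$, add a tiny constant.

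Next, expand $B(f_{q+1},f_{q+1})=B(f_q,f_q)+B(f_q,g_{q+1})+B(g_{q+1},f_q)+B(g_{q+1},g_{q+1})$. By Proposition~\ref{prop:hhl} with $\rho=\delta_0$ and Remark~\ref{rem:reeses}, $B(g_{q+1},g_{q+1})=a_{q+1}^2+E$, so that
\[
E_q+a_{q+1}^2=E_q(1-\chi^2)+2\chi^2\|E_q\|_\infty .
\]
Define $E_{q+1}$ as the sum of the following pieces: the term $(1-\chi_{R_{q+1}}^2)E_q$, which is small in $X$ because $E_q\in L^2$, so its $H^{-s}$ norm tends to zero as $R\to\infty$; the term $2\chi^2_{R_{q+1}}\|E_q\|_\infty$, placed in the $\dot W^{1,4}$ slot, where $\|\nabla\chi_R^2\|_{L^4}\sim R^{-1}R^{3/4}\to0$; the hhl remainder $E$, bounded by $\Lambda_{q+1}^{(5\delta-1)/2}\|a\|^2_{W^{4,6/5}}$; and the cross terms. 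For the cross terms, Lemma~\ref{lem:HLS-Holder}\ref{i:hls:3},\ref{i:hls:4} give the bound $\|g_{q+1}\|_{L^1}\|f_q\|_{W^{5,1}}$, which is small since $\|U_\Lambda\|_{L^1}\lesssim\Lambda^{(3\delta-1)/2}$ and $f_q$ is fixed. Taking $\Lambda_{q+1}$ large makes $\|E_{q+1}\|_X\le C2^{-q-11}$. Item~\eqref{i:ind:1} is then verified by construction, using Proposition~\ref{prop:reform} in weak form.

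For item~\eqref{i:ind:2}, the $L^1$ lower bound follows from nonnegativity: $\|f_{q+1}\|_{L^1}\ge\|f_0\|_{L^1}$ plus the accumulated contributions, or simply $\|f_q\|_1$ less nothing, since all $g$ are nonnegative. The estimate~\eqref{ind:g:estimate} comes from Lemma~\ref{lem:decoupling} with~\eqref{eq:U_est}. Here the exponent $1+3(1-\delta)(1/2-1/p_0)<0$ is exactly the condition $\delta<\frac{6-5p_0}{3(2-p_0)}$ in~\eqref{def:delta}, so the $L^{p_0}$ and $L^1$ norms carry a negative power of $\Lambda_{q+1}$ that beats $\lambda_{q+1}$, and each derivative costs $\Lambda_{q+1}$. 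The exponential-moment bound~\eqref{ind:g:estimates:2} holds because $a_{q+1}$ is supported in $B_{R_{q+1}}$, so the weight is at most $e^{\langle R\rangle^{q+1}}$, and $\Lambda_{q+1}$ is chosen afterward. The bound~\eqref{ind:aq':bounds} holds by the choice of $\lambda_{q+1}$, together with Lemma~\ref{lem:Riesz_bdd} applied to the square of the truncated amplitude, whose $X$-norm is comparable to $\|E_q\|_X+$ small by the same decomposition.

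I expect the main obstacle to be making the frequency-compact-support hypothesis of Proposition~\ref{prop:hhl} compatible with spatial compact support and positivity of $a_{q+1}$. Concretely, this means controlling $B$ of the tail $(a-\bp_{\le\lambda}a)U_\Lambda$ against $aU_\Lambda$ uniformly in $\Lambda$. The tool here is Lemma~\ref{lem:HLS-Holder}\ref{i:hls:7} with $L^{6/5}$ norms: $\|U_\Lambda\|_{L^{6/5}}\lesssim\Lambda^\delta$, so the tail must be taken smaller than $\Lambda_{q+1}^{-2\delta}$. This is possible by Lemma~\ref{lem:good_kernel} because $\lambda_{q+1}$ is chosen before $\Lambda_{q+1}$ only in magnitude but the truncation scale can be $\Lambda_{q+1}^{\delta'}$. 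That is presumably why the truncation appears in~\eqref{ind:aq':bounds}.
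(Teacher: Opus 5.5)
Your construction is the paper's. It uses the same amplitude $a_{q+1}=\chi_{R_{q+1}}(2\|E_q\|_{L^\infty}-E_q)^{1/2}$ in $g_{q+1}=a_{q+1}U_{q+1}$, and applies Proposition~\ref{prop:hhl} with $\rho=\delta_0$ to $\mathbb{P}_{\le \Lambda_{q+1}^{\delta'}}a_{q+1}$. It splits $E_q+a_{q+1}^2=(1-\chi_{R_{q+1}}^2)E_q+2\chi_{R_{q+1}}^2\|E_q\|_{L^\infty}$ between the two slots of $\|\cdot\|_X$ in the same way, and chooses $R_{q+1}$, $\lambda_{q+1}$, $\Lambda_{q+1}$ in the same order. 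Two steps fail as written, however.

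First, the truncation at $\lambda_{q+1}$ in your second paragraph cannot be used, not even ``in practice''. Since $\lambda_{q+1}$ is fixed before $\Lambda_{q+1}$, the tail $\|\mathbb{P}_{>\lambda_{q+1}}a_{q+1}\|$ does not depend on $\Lambda_{q+1}$. But the available bounds on its interaction with $a_{q+1}U_{q+1}$ (Lemma~\ref{lem:HLS-Holder} with Lemma~\ref{lem:decoupling}) carry growing factors, for example $\|U_{q+1}\|_{L^{6/5}}^2\sim\Lambda_{q+1}^{2\delta}$. Only the $\Lambda_{q+1}$-dependent cut at $\Lambda_{q+1}^{\delta'}$ that you reach in your last paragraph works, and that is the paper's choice. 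The truncated bound in~\eqref{ind:aq':bounds} is a separate matter: it is used later, in the mollifier-confluence limit.

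Second, in the linear term $B(g_{q+1},f_q)$ the rough factor sits in the first slot. There item~\ref{i:hls:3} of Lemma~\ref{lem:HLS-Holder} requires $L^{6/5}$, and $\|g_{q+1}\|_{L^{6/5}}\lesssim\lambda_{q+1}\Lambda_{q+1}^{\delta}$ does not decay. You need item~\ref{i:hls:6} instead. There $\|U_\Lambda\|_{L^{1+\gamma}}\lesssim\Lambda^{1+\frac{3(1-\delta)(\gamma-1)}{2(1+\gamma)}}$ carries a negative power for small $\gamma$ because $\delta<\sfrac{1}{3}$, which is how the paper closes this term. The $L^1$ bound you state does hold for both orderings, by weak-type HLS and Lorentz--H\"older, but not by the items you cite.

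With these repairs the argument closes, and two of your deviations are small improvements:
\begin{itemize}
\item Treating all $a_{\rm high}$ interactions through the symmetric estimate, item~\ref{i:hls:7}, needs only $N\delta'>2\delta$ in Lemma~\ref{lem:good_kernel}. The paper's pairing of $L^1$ or $L^{1+\gamma}$ against $W^{5,1}$ pays $\|U_{q+1}\|_{W^{5,1}}$ and so takes $N\delta'>18$.
\item Nonnegativity gives $\|f_{q+1}\|_{L^1}=\|f_q\|_{L^1}+\|g_{q+1}\|_{L^1}$, which yields~\eqref{ind:f:estimates:1} without the smallness of $\|g_{q+1}\|_{L^1}$ that the paper uses.
\end{itemize}
Your preliminary replacement of $E_q$ by $\mathbb{P}_{\le\mu}E_q$ is unnecessary, since the square root and the cutoff destroy band-limitation anyway.
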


\subsection{Proof of Theorem~\ref{thm:main} from Proposition~\ref{prop:ind:ks}}\label{ss:limit}
We start by verifying the base case of the iterative proposition, and then we will apply it to prove the theorem. So let us put $R_0 = \lambda_0 = 1$ and $f_0(x) = Aa_0(x)U_0(x)$ for some $a_0 \in \mathcal{S}(\R^3) \setminus \{0\}$, $a_0 \geq 0$, supported in $B(0,1)$, $A > 0$ a constant whose size will be determined, and $U_0= U_{\Lambda_0}$ from Proposition~\ref{prop:intermittent_funcs} for $\Lambda_0 \geq 10$ sufficiently large so that Proposition~\ref{prop:intermittent_funcs} is valid. Define
$$
E_0 = B(f_0,f_0)\, .
$$
The smoothness of $f_0$ and $E_0$ is immediate. By construction $f_0 \geq 0$ and is Schwartz. We claim that $E_0 \in L^2(\R^3) \cap W^{k,\infty}(\R^3)$ for all $k \geq 0$. To see this, recall that
$$
B(f_0,f_0) = f_0(-\Delta)^{-1}_{\R^3}f_0 + 2\partial_j(-\Delta)^{-1}_{\R^3}(f_0 \partial_j(-\Delta)^{-1}_{\R^3}f_0)\, .
$$
Since $f_0 \in \mathcal{S}(\R^3)$, $f_0(-\Delta)^{-1}_{\R^3}f_0 \in \mathcal{S}(\R^3)$ and $f_0\partial_j(-\Delta)^{-1}_{\R^3}f_0 \in \mathcal{S}(\R^3)$ since they are the product of Schwartz functions with smooth functions with controlled decay. We claim that if $h \in \mathcal{S}(\R^3)$ then $\partial_j(-\Delta)^{-1}_{\R^3}h \in L^2(\R^3) \cap W^{k,\infty}(\R^3)$. To prove this, fix a multi-index $\alpha$. Then we have
$$
\Vert \nabla^\alpha \partial_j (-\Delta)^{-1}_{\R^3} h \Vert_{L^\infty(\R^3)} \lesssim \int_{\R^3} \left|\xi^\alpha \frac{\xi_j}{|\xi|^2} \hat{h}(\xi)\right|\, d\xi \lesssim \int_{\R^3} \frac{|\hat{h}(\xi)|}{|\xi|^{1-|\alpha|}}\, d\xi\, .
$$
When $|\alpha| \geq 1$, this integral is clearly finite since $h \in \mathcal{S}(\R^3)$. When $|\alpha| = 0$ then
$$
\int_{\R^3} \frac{|\hat{h}(\xi)|}{|\xi|}\, d\xi = \int_{|\xi| \leq 1} \frac{\Vert \hat{h} \Vert_{L^\infty(\R^3)}}{|\xi|}\, d\xi + \int_{|\xi| > 1} |\hat{h}(\xi)|\, d\xi \lesssim \Vert \hat{h} \Vert_{L^\infty(\R^3)} + \Vert \hat{h} \Vert_{L^1(\R^3)}\, .
$$
Similarly, utilizing the Plancherel theorem we have
$$
\Vert \partial_j (-\Delta)^{-1}_{\R^3} h \Vert_{L^2(\R^3)}^2 \simeq \int_{\R^3} \frac{|\hat{h}(\xi)|^2}{|\xi|^2}\, d\xi \lesssim \Vert \hat{h} \Vert_{L^\infty(\R^3)}^2 + \Vert \hat{h} \Vert_{L^2(\R^3)}^2\, .
$$
Putting $h = f_0\partial_j(-\Delta)^{-1}_{\R^3}f_0$ proves that $E_0 \in L^2(\R^3) \cap W^{k,\infty}(\R^3)$ for all $k$. Finally \eqref{eq:rel} and \eqref{eq:weak_rel} hold by construction. This verifies item~\ref{i:ind:1} at level $q=0$.

Now choose $C$ large enough and $A$ small enough such that \eqref{ind:f:estimates:1}, \eqref{ind:g:estimate} and \eqref{ind:g:estimates:2} hold for all $k \leq 6$ (and putting $f_0 = g_0$). This gives \ref{i:ind:2} at level $q=0$.

Next, \eqref{Def:wq} holds by construction. From our choices of $R_0$, $\lambda_0$, and $\Lambda_0$ we have that \eqref{eq:growth:bounds} hold. And finally by choosing $A$ small enough we may ensure that
$$
\Vert Aa_0 \Vert_{W^{200\delta^{-1},1}(\R^3)} \leq 1 \quad \text{and} \quad \Vert A^2\mathcal{R}_i\mathcal{R}_j a_0^2 \Vert_X \leq C \, .
$$
This gives \ref{i:ind:ugh} at level $q=0$. \ref{i:ind:3} similarly follows by choosing the constant $A$ small enough, or $C$ large enough. 

Now that the base case has been verified, we apply the iterative proposition to construct $f = \lim_{q \to \infty} f_q$. From \eqref{ind:f:estimates:1} and \eqref{ind:g:estimate}, we see that $f$ is not identically zero and $f \in L^{p_0}(\R^3,dx)$.  In addition, by the assumption that $f_q \geq 0$ and $f_q \rightarrow f$ in $L^1(\R^3)$, we have that $f \geq 0$.  In order to prove that $ f\in \bigcap_{m=0}^\infty L^1(\R^3, \exp(\langle x \rangle^m)\, dx)$, fix $m \geq 0$.  Then by non-negativity, compact support, smoothness of increments, and~\eqref{ind:g:estimates:2},
\begin{align*}
   \int_{\R^3} |f(x)| \exp(\langle x \rangle^m) \, dx &= \sum_{q'=0}^{m-1} \int_{\R^3} |g_{q'}(x)| \exp(\langle x \rangle^m) \, dx +  \sum_{q'=m}^{\infty} \int_{\R^3} |g_{q'}(x)| \exp(\langle x \rangle^m) \, dx  \\
   &\leq C(m) +  \sum_{q'=m}^{\infty} \int_{\R^3} |g_{q'}(x)| \exp(\langle x \rangle^{q'}) \, dx \\
   &\leq C(m) + 1 \, .
\end{align*}

To conclude the proof of the main theorem, we must show that in the sense of tempered distributions,
$$ \lim_{\epsilon \rightarrow 0} Q_{\rm KS} (f^\rho_\epsilon, f^\rho_\epsilon)= \lim_{\epsilon \rightarrow 0} \pa_{ii}B(f^\rho_\epsilon,f^\rho_\epsilon) = 0 $$
for $f^\rho_\epsilon = f \ast \rho_\epsilon$, for all admissible mollifiers $\rho$ (which are nonnegative, Schwartz, and satisfy  $\int_{\R^3} \rho = 1$).  Note that we have used Proposition~\ref{prop:reform} to rewrite $Q_{\rm KS} = \pa_{ii} B$, which is allowed since $f_\epsilon^\rho \in W^{k,1}(\R^3)$ for all $k \geq 0$. Since $\rho$ will be fixed throughout this argument, we omit the superscript to condense the notation. From Lemma~\ref{lem:x:con}, it will suffice to show that
\begin{equation}
\lim_{\epsilon \rightarrow 0} \left\| B(f_\epsilon,f_\epsilon) \right\|_{X} = 0 \, .   \label{eq:thm:to:show}
\end{equation}
For $\epsilon > 0$ fixed, define\footnote{Without loss of generality, we may assume $\epsilon$ is sufficiently small so that $q(\epsilon) \geq 3$, justifying the appearance of $q(\epsilon)-2$ at various points in the proof.} 
$$
q(\epsilon) = \min \left\{q : \Lambda_{q}^{\frac{\delta}{2}} > \epsilon^{-1}\right\}\, .
$$
We then break up the proof of~\eqref{eq:thm:to:show} into the following steps.  
\begin{enumerate}
\item In \texttt{Step 1}, we show that $\rho_\epsilon \ast g_{q'}$ can be made arbitrarily small when $q' \geq q(\epsilon)$ and $\epsilon$ is sufficiently small.
\item In \texttt{Step 2}, we apply \texttt{Step 1} to show that $$ \lim_{\epsilon \rightarrow 0} B(f_\epsilon, f_\epsilon) = \lim_{\epsilon \rightarrow 0} B\left(\sum_{q< q(\epsilon)} \rho_\epsilon \ast g_{q}, \sum_{q< q(\epsilon)} \rho_\epsilon \ast g_{q}\right) \, . $$
\item In \texttt{Step 3}, we show that for $q'\leq q(\epsilon) - 2$, $\rho_\epsilon \ast g_{q'} - g_{q'}$ can be made arbitrarily small.
\item In \texttt{Step 4}, we apply \texttt{Step 3} to show that 
$$ \lim_{\epsilon \rightarrow 0} B\left(\sum_{q< q(\epsilon)} \rho_\epsilon \ast g_{q}, \sum_{q'< q(\epsilon)} \rho_\epsilon \ast g_{q'}\right)= \lim_{\epsilon \rightarrow 0} B\left(\rho_\epsilon \ast g_{q(\epsilon)-1} , \rho_\epsilon \ast g_{q(\epsilon)-1}\right) \, . $$
\item In \texttt{Step 5}, we apply Proposition~\ref{prop:hhl} to show that $B(\rho_\epsilon \ast g_{q(\epsilon)-1}, \rho_\epsilon \ast g_{q(\epsilon)-1})=0$, thus concluding that $\lim_{\epsilon \rightarrow 0} B(f_\epsilon, f_\epsilon)=0$.
\end{enumerate}

\bigskip

\noindent\texttt{Step 1: Bounding tail terms in the mollification.}  In this step, we prove the following lemma.
\begin{lemma}[\textbf{Bounding the tail terms in the mollification}]\label{lem:tail_killing}
Let $\rho\in \mathcal{S}$ with $\int_{\R^3} \rho = 1$.  Then there exists $C_{\rho}$ and $\epsilon_0$ depending only on $\rho$ such that for all $0<\epsilon<\epsilon_0$ and $q' \geq q(\epsilon)$, 
$$
\Vert g_{q'} \ast \rho_\epsilon \Vert_{W^{5,1}(\R^3)} \leq C_{\rho} \Lambda_{q'}^{-\frac{1}{1000}} \, .
$$
\end{lemma}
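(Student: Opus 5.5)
We split $g_{q'} = a_{q'} U_{q'}$ as
\[
g_{q'} = a_{q'}\,\bp_{=0}(U_{q'}) + a_{q'}\,\bp_{\neq 0}(U_{q'})
\]
and bound the mollification of each piece in $W^{5,1}(\R^3)$. Mollification commutes with derivatives, and $\|\rho_\epsilon\|_{L^1} = \|\rho\|_{L^1}$ is independent of $\epsilon$ (this is all we use; positivity of $\rho$ is not needed). By Young's inequality,
\[
\|\rho_\epsilon \ast (a_{q'}\bp_{=0}U_{q'})\|_{W^{5,1}} \les_\rho \Lambda_{q'}^{\frac{3\delta-1}{2}}\, \|a_{q'}\|_{W^{5,1}},
\]
using Proposition~\ref{prop:intermittent_funcs}\ref{i:block:1}. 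By~\eqref{ind:aq':bounds} and $200\delta^{-1}\ge 5$ we have $\|a_{q'}\|_{W^{5,1}}\le \lambda_{q'}$, and by~\eqref{eq:growth:bounds} $\lambda_{q'} \le \log \Lambda_{q'}$ (since $\lambda^\lambda\le\Lambda$). Because $\delta<\sfrac15$, the exponent $\frac{3\delta-1}{2}$ is below $-\frac15$, so this piece is at most $\Lambda_{q'}^{-\frac15}\log\Lambda_{q'} \le \Lambda_{q'}^{-\frac1{1000}}$.

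The main piece is the oscillatory part $h := a_{q'}\bp_{\neq0}(U_{q'})$. Its Fourier transform is supported in $\{|\xi| \ge \Lambda_{q'}^\delta - \lambda'\}$, where $\lambda'$ is the frequency support radius of $a_{q'}$; this is contained in $\{|\xi|\ge \tfrac12\Lambda_{q'}^{\delta}\}$. Here we use that $a_{q'}$ is compactly supported in frequency, as required in Proposition~\ref{prop:hhl}, with radius at most about $\lambda_{q'}\ll \Lambda_{q'}^{\delta/2}$. If the construction only makes $a_{q'}$ spatially compact, we first replace it by $\bp_{\le\Lambda_{q'}^{\delta'}}a_{q'}$ and control the difference by Lemma~\ref{lem:good_kernel} with $N$ large, using the $W^{200\delta^{-1},1}$ bound.

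Since $q'\ge q(\epsilon)$, we have $\Lambda_{q'}^{\delta/2}>\epsilon^{-1}$, so on the support of $\hat h$,
\[
\epsilon|\xi| \ge \tfrac12\epsilon\Lambda_{q'}^{\delta} \ge \tfrac12 \Lambda_{q'}^{\delta/2}.
\]
In other words, $\rho_\epsilon$ is evaluated at frequencies far above $\epsilon^{-1}$. To exploit this in $L^1$ without a Fourier multiplier theorem, choose a smooth cutoff $\theta$ vanishing near $0$ and equal to $1$ on $|\xi|\ge \tfrac12$, and write
\[
\rho_\epsilon \ast h = K_{\epsilon,q'} \ast h, \qquad \widehat{K_{\epsilon,q'}}(\xi) = \hat\rho(\epsilon\xi)\,\theta\!\left(\Lambda_{q'}^{-\delta}\xi\right).
\]
The key estimate is $\|K_{\epsilon,q'}\|_{L^1}\les_{\rho,N} \Lambda_{q'}^{-N\delta/2}$. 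To prove it, rescale $\xi\mapsto \xi/\epsilon$: the multiplier becomes $\hat\rho(\xi)\theta(\xi/\mu)$ with $\mu=\epsilon\Lambda_{q'}^\delta\ge \Lambda_{q'}^{\delta/2}\ge1$. Since $\hat\rho$ is Schwartz, its derivatives of order $\le 4$, weighted by $(1+|\xi|)^4$, are $O_N(\mu^{-N})$ on $|\xi|\gtrsim\mu$, uniformly in $\mu\ge1$. The standard bound $\|\check m\|_{L^1}\les \|m\|_{L^2}^{1/4}\|\nabla^{2}m\|_{L^2}^{3/4}$ (or simply bounding $\sum_{|\alpha|\le 4}\|\partial^\alpha m\|_{L^1}$) then gives the claim, and the $L^1$ norm of the kernel is invariant under the rescaling.

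It remains to bound $h$ itself. By Lemma~\ref{lem:decoupling} with $p=1$, $d=3$, $k=5$,
\[
\|h\|_{W^{5,1}} \les \|a_{q'}\|_{W^{9,1}}\,\|\bp_{\ne0}U_{q'}\|_{W^{5,1}(\T^3)} \les \lambda_{q'}\,\Lambda_{q'}^{6},
\]
where the last step uses~\eqref{eq:U_est} with $p=1$. Combining with the kernel bound,
\[
\|\rho_\epsilon\ast h\|_{W^{5,1}} \les_{\rho,N} \lambda_{q'}\,\Lambda_{q'}^{6-N\delta/2}.
\]
Taking $N = 20\delta^{-1}$ and using $\lambda_{q'}\le\log\Lambda_{q'}$, this is far below $\Lambda_{q'}^{-\frac1{1000}}$. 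The threshold $\epsilon_0$ only ensures that $q(\epsilon)$ is large enough that $\Lambda_{q'}$ exceeds the constants arising from $\theta$ and the frequency-support bookkeeping.

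The step I expect to be the main obstacle is the uniform $L^1$ kernel estimate for the high-frequency part of $\hat\rho(\epsilon\,\cdot)$: it must be uniform in both $\epsilon$ and $q'$. The other point needing care is the reduction to a frequency-localized $a_{q'}$, if that is not already built into the construction.
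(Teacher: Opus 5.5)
Your argument works, but one premise is false. The fallback you mention is therefore not optional; it is the only available route. $a_{q'}$ is never compactly supported in frequency. By inductive item~\ref{i:ind:ugh} and Definition~\ref{def:gq+1}, $a_{q'}$ is a nonzero function supported in $B(0,R_{q'})$, so by Paley--Wiener $\widehat{a_{q'}}$ cannot have compact support. Proposition~\ref{prop:hhl} is only ever applied to $\bp_{\le \Lambda^{\delta'}} a_{q'}$, not to $a_{q'}$ itself. So you must split $a_{q'} = a_{\rm low} + a_{\rm high}$ at scale $\Lambda_{q'}^{\delta'}$, and this does close:
\begin{itemize}
\item \textbf{High part.} Use Lemma~\ref{lem:decoupling}, \eqref{eq:U_est}, and Lemma~\ref{lem:good_kernel} with, say, $N = 150\delta^{-1}$ (so that $9+N \le 200\delta^{-1}$), together with $\delta' > \delta/10$. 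This gives $\|\rho_\epsilon \ast (a_{\rm high}\bp_{\neq 0}U_{q'})\|_{W^{5,1}} \les \|\rho\|_{L^1}\Lambda_{q'}^{-N\delta'}\lambda_{q'}\Lambda_{q'}^{5} \les \lambda_{q'}\Lambda_{q'}^{-10}$.
\item \textbf{Low part.} $a_{\rm low}\bp_{\neq0}U_{q'}$ has Fourier support in $\{|\xi| \ge \Lambda_{q'}^\delta - \Lambda_{q'}^{\delta'}\} \subset \{|\xi|\ge \tfrac12\Lambda_{q'}^\delta\}$. Lemma~\ref{lem:proj_est} gives $\|a_{\rm low}\|_{W^{9,1}} \les \lambda_{q'}$. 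Your kernel estimate then finishes the job; it is correct and uniform in $\epsilon$ and $q'$, since $\mu = \epsilon\Lambda_{q'}^\delta > \Lambda_{q'}^{\delta/2}$.
\end{itemize}

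With that correction, your treatment of the oscillatory part genuinely differs from the paper's. The paper never localizes $a_{q'}$ in frequency. It writes $\bp_{\neq 0}U_{q'} = (-\Delta)^m(-\Delta)^{-m}_{\T^3}\bp_{\neq0}U_{q'}$ with $m = 10\delta^{-1}$ and integrates by parts in physical space onto $\rho_\epsilon(x-y)a_{q'}(y)$. Each derivative landing on $\rho_\epsilon$ costs $\epsilon^{-1} < \Lambda_{q'}^{\delta/2}$. The at most $20\delta^{-1}$ derivatives landing on $a_{q'}$ cost only the fixed budget $\lambda_{q'}$ from \eqref{ind:aq':bounds}. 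The periodic inverse Laplacian gains $\Lambda_{q'}^{-2m\delta}$. That argument works directly with the spatially compact $a_{q'}$ and needs neither a Littlewood--Paley truncation nor an $L^1$ kernel bound.

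Your argument is the Fourier-side dual of the same mechanism: $\hat\rho(\epsilon\,\cdot)$ is only sampled where $|\epsilon\xi| \gtrsim \Lambda_{q'}^{\delta/2}$, and there Schwartz decay makes the kernel negligible in $L^1$. This makes the mechanism more transparent and the kernel bound reusable. The price is the extra $a_{\rm low}/a_{\rm high}$ split, which is the same device the paper uses in Step~5a of Subsection~\ref{ss:limit}. The mean part $a_{q'}\bp_{=0}U_{q'}$ is handled identically in both.
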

\begin{proof}
We decompose the definition of $g_{q'}$ from~\eqref{Def:wq} into
   $$
   g_{q',0} = a_{q'} \mathbb{P}_{=0}(U_{q'}) \quad \text{and} \quad g_{q',1} = a_{q'} \mathbb{P}_{\not=0}(U_{q'}) \, .
   $$
Then from inductive assumption~\ref{i:ind:ugh},~\eqref{eq:U_est} with $p=1$, the parameter bounds in~\eqref{eq:growth:bounds}, and a sufficiently small choice of $\epsilon_0$, we have
\begin{equation*}
    \begin{split}
        \Vert g_{q',0} \ast \rho_\epsilon \Vert_{W^{5,1}(\R^3)} &= |\mathbb{P}_{=0}(U_{q'})| \Vert a_{q'} \ast \rho_\epsilon \Vert_{W^{5,1}(\R^3)}\\
    &\leq \lambda_{q'} \Vert \rho \Vert_{L^1(\R^3)} \Vert U_{q'} \Vert_{L^1(\T^3)} \\
    &\leq C_\rho \Lambda_{q'}^{-\frac{1}{100}} \, .
        \end{split}
    \end{equation*}
    Now for the second term, our strategy is to apply integration by parts many times. Writing out the convolution\footnote{The convolution makes sense, since it is a Schwartz function convolved with a function with bounded derivatives of all orders.} and integrating by parts with the Laplacian $m$ times for $m=10\delta^{-1}$, we obtain
\begin{align*}
    (g_{q',1} \ast \rho_\epsilon)(x) &=\int_{\R^3} \left(\rho_\epsilon(x-y) a_{q'}(y)\right) (-\Delta_y)^m (-\Delta_y)_{\T^3}^{-m} \mathbb{P}_{\not=0}(U_{q'})(y)\, dy \\
    &= \int_{\R^3} (-\Delta_y)^m\left(\rho_\epsilon(x-y) a_{q'}(y)\right) (-\Delta_y)_{\T^3}^{-m} \mathbb{P}_{\not=0}(U_{q'})(y)\, dy \, .
\end{align*}
So for $|\alpha| \leq 5$ we have
    $$
     \int_{\R^3} (-\Delta_y)^m\left( \nabla_x^\alpha \rho_\epsilon(x-y) a_{q'}(y)\right) (-\Delta_y)_{\T^3}^{-m} \mathbb{P}_{\not=0}(U_{q'})(y)\, dy \, .
    $$
    Writing $(-\Delta_y)^m = \nabla_y^M$ and applying the product rule we obtain
\begin{equation*}
    \begin{split}
    (-\Delta_y)^m(\nabla_x^\alpha\rho_\epsilon(x-y)a_{q'}(y)) &= \sum_{|\beta| \leq 2m} C_{\beta} \nabla_x^\alpha \nabla_y^{\beta} \rho_\epsilon(x-y) \nabla_{y}^{M-\beta} a_{q'}(y) \, ,
        \end{split}
    \end{equation*}
    and thus from the bound $\|a_{q'}\|_{W^{200\delta^{-1},1}(\R^3)} \leq \lambda_{q'}$, the assumption $q' \geq q(\epsilon)$, the choice of $m=10\delta^{-1}$, the parameter bounds in~\eqref{eq:growth:bounds}, and a sufficiently small choice of $\epsilon_0$,
\begin{align*}
    \Vert g_{q',1} \ast \rho_\epsilon \Vert_{W^{5,1}(\R^3)}&\leq \sum_{|\alpha| \leq 5, \, |\beta| \leq 2m} C_\beta \| \nabla^{|\alpha|+|\beta|} \rho_\epsilon \|_{L^1(\R^3)} \| \nabla^{M-|\beta|} a_{q'} \|_{L^1(\R^3)} \|(-\Delta_y)_{\T^3}^{-m} \bp_{\neq 0} (U_{q'}) \|_{L^\infty(\R^3)} \\
    &\leq C_{\rho} \epsilon^{-2m-5} \lambda_{q'}\Lambda_{q'}^{-2m\delta + 4} \| \bp_{\neq 0} (U_{q'}) \|_{L^1(\T^3)} \\
    &\leq C_\rho \Lambda_{q(\epsilon)}^{\frac{\delta}{2}(2m+5)} \lambda_{q'} \Lambda_{q'}^{-2m\delta+4} \\
    &\leq C_\rho \Lambda_{q'}^{-m\delta+4+ \frac{5\delta}{2}} \lambda_{q'} \\
    &\leq C_\rho \Lambda_{q'}^{-1} \, .
    \end{align*}
Adding this together with the bound for $g_{q',0}$ concludes the proof.
\end{proof}
\bigskip

\noindent\texttt{Step 2: Applying Step 1 to simplify $B(f_\epsilon, f_\epsilon)$. } We compute using \texttt{Step 1}, Lemma~\ref{lem:HLS-Holder}, items~\ref{i:hls:4} and~\ref{i:hls:6}, and~\eqref{ind:g:estimate} that
\begin{align*}
\sum_{\substack{q'  \geq q(\epsilon) \\ \textnormal{or } q''\geq q(\epsilon)}} \left\| B\left( g_{q'} \ast \rho_\epsilon , g_{q''} \ast \rho_\epsilon \right) \right\|_{H^{-10}(\R^3)} &\leq \sum_{\substack{q'  \geq q(\epsilon) \\ \textnormal{and } q''< q(\epsilon)}} \left\| B\left( g_{q'} \ast \rho_\epsilon , g_{q''} \ast \rho_\epsilon \right) \right\|_{H^{-10}(\R^3)} \\
&\qquad + \sum_{\substack{q'  \geq q(\epsilon) \\ \textnormal{and } q''< q(\epsilon)}} \left\| B\left( g_{q''} \ast \rho_\epsilon , g_{q'} \ast \rho_\epsilon \right) \right\|_{H^{-10}(\R^3)} \\
&\qquad \qquad + \sum_{\substack{q'  \geq q(\epsilon) \\ \textnormal{and } q''\geq q(\epsilon)}} \left\| B\left( g_{q'} \ast \rho_\epsilon , g_{q''} \ast \rho_\epsilon \right) \right\|_{H^{-10}(\R^3)} \\
&\leq 2C_\rho \sum_{\substack{q'  \geq q(\epsilon) \\ \textnormal{and } q''< q(\epsilon)}} \Lambda_{q'}^{-\frac{1}{1000}} \Lambda_{q''}^{1000} + \sum_{q', q'' \geq q(\epsilon)} \Lambda_{q'}^{-\frac{1}{1000}} \Lambda_{q''}^{-\frac{1}{1000}} \, .
\end{align*}
Using the growth bounds from~\eqref{eq:growth:bounds},
we observe that
$$
\sum_{\substack{q'\geq q(\epsilon) \\ \textnormal{or } q'' \geq q(\epsilon)}} B(g_{q'} \ast \rho_\epsilon, g_{q''} \ast \rho_\epsilon)
$$
is absolutely summable in $H^{-10}(\R^3)$. Upon sending $\epsilon \to 0$, we see that $q(\epsilon) \to \infty$ and thus
$$
\lim_{\epsilon \to 0} \sum_{\substack{q'\geq q(\epsilon) \\ \textnormal{or } q'' \geq q(\epsilon)}} B(g_{q'} \ast \rho_\epsilon, g_{q''} \ast \rho_\epsilon) = 0
$$
in $H^{-10}(\R^3)$, and hence in the sense of tempered distributions as well.

\bigskip
\noindent\texttt{Step 3: Mollification doesn't affect $g_{q'}$ if $q'\leq q(\epsilon)-2$}.
The next lemma asserts that from the growth assumptions on the sequence $\{\Lambda_q\}$ in inductive item~\ref{i:ind:ugh}, if $q \leq q(\epsilon)-2$, then $\Lambda_{q}$ is much less than $\epsilon^{-1}$. Since $\delta$ is fixed, for any $k$ fixed we may choose $\epsilon$ small enough so that
$$
\Lambda_q^k \ll \Lambda_{q(\epsilon)-1}^{\sfrac \delta 2} \leq \epsilon^{-1} \, ,
$$
leading to the following lemma.

\begin{lemma}[\textbf{Mollification doesn't affect $g_{q'}$ if $q'\leq q(\epsilon)-2$}]\label{lem:bulk_killing}
Let $\rho\in \mathcal{S}$ with $\int_{\R^3} \rho = 1$.  Then there exists $C_{\rho}$ and $\epsilon_0$ depending only on $\rho$ such that for all $0<\epsilon < \epsilon_0$ and $q' \leq q(\epsilon)-2$, 
$$ \Vert \rho_\epsilon \ast g_{q'} - g_{q'} \Vert_{W^{5,1}(\R^3)} \leq C_\rho \epsilon \Lambda_{q'}^{1000} \, .  $$
\end{lemma}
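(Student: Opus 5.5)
The estimate is a first-order Taylor bound for the mollifier, combined with the growth relations in~\eqref{eq:growth:bounds}.

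\emph{Step 1 (reduction to a derivative bound).} Since $\int_{\R^3}\rho=1$, for any $h\in W^{6,1}(\R^3)$ we write
\begin{equation*}
\rho_\epsilon\ast h(x)-h(x)=\int_{\R^3}\rho_\epsilon(y)\bigl(h(x-y)-h(x)\bigr)\,dy=-\int_{\R^3}\rho_\epsilon(y)\int_0^1 y\cdot\nabla h(x-ty)\,dt\,dy .
\end{equation*}
Apply this to $\nabla^\alpha h$ for $|\alpha|\le 5$, take $L^1$ norms and use Minkowski's inequality. This gives
\begin{equation*}
\|\rho_\epsilon\ast h-h\|_{W^{5,1}}\le \Bigl(\int_{\R^3}|y||\rho_\epsilon(y)|\,dy\Bigr)\|h\|_{W^{6,1}}=\epsilon\Bigl(\int_{\R^3}|y||\rho(y)|\,dy\Bigr)\|h\|_{W^{6,1}} .
\end{equation*}
The first moment of $\rho$ is finite because $\rho$ is Schwartz, so it is absorbed into $C_\rho$. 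No vanishing-moment property is needed, since one power of $\epsilon$ suffices.

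\emph{Step 2 (bounding $\|g_{q'}\|_{W^{6,1}}$).} Take $h=g_{q'}$. By the inductive estimate~\eqref{ind:g:estimate} with $k\le 6$, we have $\|\nabla^k g_{q'}\|_{L^1}\le C2^{-q'-11}\Lambda_{q'}^k$, hence $\|g_{q'}\|_{W^{6,1}}\lesssim C\Lambda_{q'}^6\le C\Lambda_{q'}^{1000}$ since $\Lambda_{q'}\ge1$. The constant $C$ is fixed independently of $q$, so we absorb it into $C_\rho$. Alternatively, we could use $g_{q'}=a_{q'}U_{q'}$ with Lemma~\ref{lem:decoupling}, the bound $\|a_{q'}\|_{W^{200\delta^{-1},1}}\le\lambda_{q'}\le\Lambda_{q'}$ from~\eqref{eq:growth:bounds}, and~\eqref{eq:U_est} with $p=1$. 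That route also gives a polynomial bound in $\Lambda_{q'}$, well below $\Lambda_{q'}^{1000}$.

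\emph{Step 3 (role of $q'\le q(\epsilon)-2$ and $\epsilon_0$).} Steps 1 and 2 already give the stated inequality for every $q'$. The hypothesis $q'\le q(\epsilon)-2$ matters only for how the lemma is used afterwards. By minimality of $q(\epsilon)$ we have $\Lambda_{q(\epsilon)-1}^{\delta/2}\le\epsilon^{-1}$, and~\eqref{eq:growth:bounds} gives $\Lambda_{q'}^{\Lambda_{q'}}\le\lambda_{q'+1}\le\Lambda_{q(\epsilon)-1}$. Therefore $\epsilon\Lambda_{q'}^{1000}\le\Lambda_{q'}^{1000-\frac\delta2\Lambda_{q'}}$, which is tiny and summable in $q'$. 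The threshold $\epsilon_0$ only ensures that $q(\epsilon)\ge3$, so that the range of $q'$ is nonempty and the footnote's convention applies.

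I do not expect a genuine obstacle here. The only point needing care is to keep the constant independent of $q'$ and $\epsilon$. That is guaranteed because the $W^{6,1}$ bound comes from the uniform inductive estimate~\eqref{ind:g:estimate}, and the Taylor bound depends only on the first moment of $\rho$.
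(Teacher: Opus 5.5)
Your proof is correct and follows the paper's argument essentially verbatim. You write $\rho_\epsilon\ast g_{q'}-g_{q'}=\int\rho_\epsilon(y)\bigl(g_{q'}(x-y)-g_{q'}(x)\bigr)\,dy$, apply the fundamental theorem of calculus to each $\nabla^\alpha g_{q'}$ with $|\alpha|\le 5$, extract $\epsilon\|\,|\cdot|\rho\|_{L^1}$ from the first moment, and control $\|g_{q'}\|_{W^{6,1}}$ through the inductive bound~\eqref{ind:g:estimate}. You are also right that this gives $C_\rho\epsilon\Lambda_{q'}^{6}$ for every $q'$ and every $\epsilon$, so the restrictions $q'\le q(\epsilon)-2$ and $\epsilon<\epsilon_0$ matter only for the later use in \texttt{Step 4}.
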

\begin{proof}
    First we write that
    $$
    \left(\rho_\epsilon \ast g_{q'} - g_{q'}\right)(x) = \int_{\R^3} \rho_{\epsilon}(y) \left(g_{q'}(x-y) - g_{q'}(x)\right)\, dy \, .
    $$
    Then for $|\alpha| \leq 5$ we have that
    $$
    \left\Vert \nabla^\alpha \left(\rho_\epsilon \ast g_{q'} - g_{q'}\right)\right\Vert_{L^1(\R^3)} \leq \int_{\R^3}\int_{\R^3} \left|\rho_{\epsilon}(y)\right| \left|\nabla^\alpha g_{q'}(x-y) - \nabla^\alpha g_{q'}(x)\right|\, dy\, dx \, .
    $$
    From the fundamental theorem of calculus we have that
    $$
    \left|\nabla^\alpha g_{q'}(x-y) - \nabla^\alpha g_{q'}(x)\right| \leq |y| \int_0^1 \left| \nabla \left(\nabla^{\alpha } g_{q'}(x-ty)\right)\right|\, dt \, .
    $$
    Hence from~\eqref{ind:g:estimate}, we have
    $$
    \left\Vert \nabla^\alpha \left(\rho_\epsilon \ast g_{q'} - g_{q'}\right)\right\Vert_{L^1(\R^3)} \leq \Vert \nabla\left( \nabla^{\alpha} g_{q'}\right) \Vert_{L^1(\R^3)} \int_{\R^3} |y| |\rho_\epsilon(y)|\, dy \leq  \epsilon \Lambda_{q'}^{|\alpha| + 1}\Lambda_{q'}^{\frac{3\delta-1}{2}} \lambda_{q'}^{|\alpha|+6} \Vert \cdot\rho(\cdot) \Vert_{L^1(\R^3)}.
    $$
    Summing over $|\alpha| \leq 5$, using the growth condition~\eqref{eq:growth:bounds}, and choosing $\epsilon_0$ sufficiently small completes the proof.
\end{proof}
\bigskip

\noindent\texttt{Step 4: Applying Step 3 to simplify $\lim_{\epsilon \rightarrow 0} B(\sum_{q< q(\epsilon)} \rho_\epsilon \ast g_{q}, \sum_{q'< q(\epsilon)} \rho_\epsilon \ast g_{q'})$. }  We split
$$ \sum_{q<q(\epsilon)} \rho_\epsilon \ast g_q = \sum_{q<q(\epsilon)-1} (\rho_\epsilon - \Id) g_q + \sum_{q<q(\epsilon)-1} g_q + \rho_\epsilon \ast g_{q(\epsilon)-1}  $$
and similarly for $q'$.  Plugging this into the bilinear form, we obtain
\begin{align*}
B&\left(\sum_{q< q(\epsilon)} \rho_\epsilon \ast g_{q}, \sum_{q'< q(\epsilon)} \rho_\epsilon \ast g_{q'}\right) \\
&= B\left( \sum_{q<q(\epsilon)-1} (\rho_\epsilon - \Id) g_q , \sum_{q'<q(\epsilon)-1} (\rho_\epsilon - \Id) g_{q'} \right) \\
    &\qquad +  B\left( \sum_{q<q(\epsilon)-1} g_q , \sum_{q'<q(\epsilon)-1} (\rho_\epsilon - \Id) g_{q'} \right) + B \left( \rho_\epsilon \ast g_{q(\epsilon)-1} , \sum_{q'<q(\epsilon)-1} (\rho_\epsilon - \Id) g_{q'} \right) \\
    &+ B\left( \sum_{q<q(\epsilon)-1} (\rho_\epsilon - \Id) g_q , \sum_{q'<q(\epsilon)-1} g_{q'} \right) \\
    &\qquad +  B\left( \sum_{q<q(\epsilon)-1} g_q , \sum_{q'<q(\epsilon)-1}  g_{q'} \right) + B \left( \rho_\epsilon \ast g_{q(\epsilon)-1} , \sum_{q'<q(\epsilon)-1}  g_{q'} \right) \\
    &+B\left( \sum_{q<q(\epsilon)-1} (\rho_\epsilon - \Id) g_q , \rho_\epsilon \ast g_{q(\epsilon)-1}\right) \\
    &\qquad +  B\left( \sum_{q<q(\epsilon)-1} g_q , \rho_\epsilon \ast g_{q(\epsilon)-1}\right) + B \left( \rho_\epsilon \ast g_{q(\epsilon)-1} , \rho_\epsilon \ast g_{q(\epsilon)-1} \right) \\
    &= I + II + III + IV + V + VI + VII + VIII + IX \, .
\end{align*}
The term $IX$ is the one term we will leave remaining in this step and will be analyzed in \texttt{Step 5}.  To analyze the remaining terms, we recall that by the definition of $q(\epsilon)$, $\epsilon^{-1} \geq \Lambda_{q(\epsilon)-1}^{\sfrac \delta 2}$.  Therefore by Lemma~\ref{lem:HLS-Holder}, Lemma~\ref{lem:bulk_killing}, and the growth conditions in~\eqref{eq:growth:bounds},
$$ \|  I \|_{H^{-10}(\R^3)} \leq \sum_{q,q'< q(\epsilon)-1} C_\rho^2 \epsilon^2 \Lambda_q^{1000} \Lambda_{q'}^{1000} < \sum_{q,q'< q(\epsilon)-1} C_\rho^2 \Lambda_{q(\epsilon)-1}^{-\delta} \Lambda_q^{1000} \Lambda_{q'}^{1000} \rightarrow 0 $$
as $\epsilon \rightarrow 0$.  Using instead now~\eqref{ind:g:estimate}, we similarly bound $II$ by
$$ \| II \|_{H^{-10}(\R^3)} \leq \sum_{q,q' < q(\epsilon)-1} C_\rho \epsilon \Lambda_{q'}^{1000} \Lambda_q^5 \rightarrow 0 $$
as $\epsilon \rightarrow 0$.  To bound $III$, we appeal to Lemma~\ref{lem:HLS-Holder}, item~\ref{i:hls:6}, Lemma~\ref{lem:decoupling}, and~\eqref{eq:U_est} to bound
$$ \| III \|_{H^{-10}(\R^3)} \les \| g_{q(\epsilon)-1} \|_{L^{1+\gamma}(\R^3)} \sum_{q'<q(\epsilon)-1} \| g_{q'} \|_{W^{5,1}(\R^3)} \les \Lambda_{q(\epsilon)-1}^{-\frac{1}{1000}} \sum_{q'<q(\epsilon)-1} \Lambda_{q'}^{5} \rightarrow 0 $$
as $\epsilon \rightarrow 0$, where as usual we have used the growth conditions~\ref{eq:growth:bounds}.  The term $IV$ can be bounded similarly to $II$, and we leave the details to the reader.  The term $V$ converges to zero by the inductive assumptions~\eqref{eq:rel},~\eqref{ind:E}, and Lemma~\ref{lem:x:con}.  The terms $VI$, $VII$, and $VIII$ we bound in a manner similar to $III$, and we again omit the details.  Finally, we leave $IX$ for the next step.  
\bigskip

\noindent\texttt{Step 5: Showing that $\lim_{\epsilon \rightarrow 0}B(\rho_\epsilon \ast g_{q(\epsilon)-1}, \rho_\epsilon \ast g_{q(\epsilon)-1})=0$. }  Using~\eqref{Def:wq}, we split
$$
\rho_\epsilon \ast g_{q(\epsilon)-1} = a_{q(\epsilon)-1} \rho_\epsilon \ast U_{q(\epsilon)-1} + \left[\rho_\epsilon \ast,a_{q(\epsilon)-1}\right]U_{q(\epsilon)-1} \, .
    $$
    The commutator term we expect to be able to make small, and thus the main contribution comes from the first term. Our bound for the commutator is as follows, and we prove this in \texttt{Step 5c} below. 
    \begin{lemma}[\textbf{Bound for the commutator}]\label{lem:com:bound}
    For any $\alpha \in \{0,1\}$, we have
\begin{equation}\label{eq:comm_est}
    \left\Vert \left[\rho_\epsilon \ast,a_{q(\epsilon)-1}\right]U_{q(\epsilon)-1} \right\Vert_{W^{\alpha,1}(\R^3)} \leq C_\rho R_{q(\epsilon)-1}^6 \lambda_{q(\epsilon)-1} \Lambda_{q(\epsilon)-1}^{\frac{3\delta-1}{2}} \max\left(\Lambda_{q(\epsilon)-1}^{\alpha-1},\epsilon^{1-\alpha}\right)  \, .
        \end{equation}
    \end{lemma}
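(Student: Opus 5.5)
We write $q_* = q(\epsilon)-1$, $a = a_{q_*}$, $U = U_{q_*}$, $\Lambda = \Lambda_{q_*}$, $\lambda = \lambda_{q_*}$, $R = R_{q_*}$. The plan is to write the commutator explicitly as an integral of differences of $a$ against $U$:
\begin{equation*}
\left[\rho_\epsilon \ast, a\right]U(x) = \int_{\R^3} \rho_\epsilon(y)\,\bigl(a(x-y) - a(x)\bigr)\, U(x-y)\, dy \, .
\end{equation*}
There are two natural ways to bound this, one for each of $\alpha=0$ and $\alpha=1$. The first gains a factor $\epsilon$ from the difference of $a$ and costs no derivatives on $U$. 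The second loses the $\epsilon$-gain but pays $\Lambda$ for each derivative landing on $U$. Taking the better of the two, or the natural one for each $\alpha$, gives the $\max(\Lambda^{\alpha-1},\epsilon^{1-\alpha})$ structure. The common factor $\Lambda^{\frac{3\delta-1}{2}}$ comes from $\|U\|_{L^1(\T^3)}$ via \eqref{eq:U_est} with $p=1$.

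For $\alpha=0$, we use the fundamental theorem of calculus, $|a(x-y)-a(x)| \le |y|\int_0^1 |\nabla a(x-ty)|\,dt$. Then we take the $L^1$ norm in $x$ and apply Fubini, so it remains to bound $\int |\nabla a(x-ty)|\,|U(x-y)|\,dx$ uniformly in $t,y$. This is a product of a function on $\R^3$ with a periodic function, so the argument of Lemma~\ref{lem:decoupling} (summing over unit cubes and using $W^{4,1}\subset L^\infty$ on cubes) bounds it by $\|\nabla a\|_{W^{4,1}}\|U\|_{L^1(\T^3)}$. This gives $\epsilon \|\cdot\rho\|_{L^1}\lambda \Lambda^{\frac{3\delta-1}{2}}$ via \eqref{ind:aq':bounds}. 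The $R^6$ factor is harmless slack; it can also appear if one instead bounds $\|\nabla a\|_{L^\infty}$ and localizes to the support ball of radius $\sim R$ (volume $R^3$), counting periods inside it.

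For $\alpha=1$, we differentiate in $x$. The derivative falls either on $a(x-y)-a(x)$, which is handled as above with one more derivative of $a$ (still controlled by $\lambda$), or on $U(x-y)$. In the latter case we drop the difference structure and bound the two pieces $\rho_\epsilon\ast(a\nabla U)$ and $a(\rho_\epsilon\ast\nabla U)$ separately by Young's inequality and decoupling. This gives $\lambda\|\nabla U\|_{L^1(\T^3)}\lesssim \lambda\Lambda\cdot\Lambda^{\frac{3\delta-1}{2}}$ using \eqref{eq:U_est}, which matches $\Lambda^{\alpha-1}\cdot\Lambda^{\frac{3\delta-1}{2}}$ with $\alpha=1$, since $\max(\Lambda^{0},\epsilon^0)=1$ up to the $\Lambda$ accounting. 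The contribution where the derivative hits $a$ is lower order, since $\epsilon\lesssim 1$.

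The main obstacle is bookkeeping in the decoupling step: Lemma~\ref{lem:decoupling} is stated for periodicity on $\T^3$, while $U$ has the finer period $\Lambda^{-\delta}$, and the translate $U(\cdot-y)$ is still periodic with the same $L^1(\T^3)$ norm. I would apply the lemma on the unit torus, which is legitimate since $U$ is also $\Z^3$-periodic, and absorb the translation by invariance of the torus norm. The growth of constants is then absorbed into $C_\rho R^6\lambda$, which is generous compared to what is needed.
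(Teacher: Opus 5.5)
Your $\alpha=0$ argument is correct, and somewhat cleaner than the paper's. The fundamental theorem of calculus in the difference of $a$, Fubini, and the cube-by-cube argument of Lemma~\ref{lem:decoupling} applied to $\nabla a$ against the translated $\Z^3$-periodic function $U(\cdot-(1-t)y)$ give $C_\rho\,\epsilon\,\lambda\Lambda^{\frac{3\delta-1}{2}}$ for every $\epsilon$, with no case split and no near/far-field sum over $k$.

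The gap is at $\alpha=1$. There the right-hand side of \eqref{eq:comm_est} is $C_\rho R^6\lambda\Lambda^{\frac{3\delta-1}{2}}\max(\Lambda^{0},\epsilon^{0})=C_\rho R^6\lambda\Lambda^{\frac{3\delta-1}{2}}$, with no factor of $\Lambda$ at all. When the derivative lands on $U(x-y)$, you drop the difference $a(x-y)-a(x)$ and bound $\rho_\epsilon\ast(a\nabla U)$ and $a(\rho_\epsilon\ast\nabla U)$ separately. Each piece is genuinely of size $\lambda\|\nabla U\|_{L^1(\T^3)}\sim\lambda\Lambda^{1+\frac{3\delta-1}{2}}$, which is a full factor $\Lambda$ too large. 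This factor cannot be absorbed into $C_\rho R^6$, since $R^R\le\Lambda$ by \eqref{eq:growth:bounds}. Your phrase ``matches $\Lambda^{\alpha-1}\cdot\Lambda^{\frac{3\delta-1}{2}}$ with $\alpha=1$ \ldots up to the $\Lambda$ accounting'' hides exactly this loss, because $\Lambda^{\alpha-1}=1$ at $\alpha=1$. The loss is not cosmetic. In Step~5b, the $\alpha=1$ bound together with $W^{1,1}\subset L^{6/5}$ is what yields $\Lambda^{\frac{5\delta-1}{2}}\to0$; your bound would give $\Lambda^{\frac{5\delta+1}{2}}$, which diverges.

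The missing idea is that the $x$-derivative must never land on $U$ without being paired with the $\epsilon$ gained from the difference of $a$. The paper achieves this with a case split.
\begin{itemize}
\item If $\epsilon^{-1}>\Lambda$, it keeps the difference in $\int\rho_\epsilon(y)\bigl(a(x-y)-a(x)\bigr)\nabla U(x-y)\,dy$. This costs $\epsilon\lambda\cdot\Lambda\Lambda^{\frac{3\delta-1}{2}}\le\lambda\Lambda^{\frac{3\delta-1}{2}}$.
\item If $\epsilon^{-1}\le\Lambda$, it switches to the representation $\int\rho_\epsilon(x-y)\bigl(a(y)-a(x)\bigr)U(y)\,dy$. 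Here $\nabla_x$ falls only on $\rho_\epsilon(x-y)$ or on $a(x)$, never on $U$. The cost $\epsilon^{-1}$ of hitting $\rho_\epsilon$ is compensated by $|a(y)-a(x)|\les\lambda|x-y|$. Localization comes from summing over integer translates, split at $|k|\sim R^2$.
\end{itemize}
In fact the second representation alone suffices for every $\epsilon$. The quantity $\int|z|\,|\nabla\rho_\epsilon(z)|\,dz=\int|w|\,|\nabla\rho(w)|\,dw$ is independent of $\epsilon$; this is the classical Friedrichs commutator bound. Restricting to $x$ or $y$ in the support ball of $a$ then gives the $\alpha=1$ estimate, with $R^3$ in place of $R^6$.
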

\noindent Assuming for the moment that the lemma holds, we continue with \texttt{Step 5}.  Using the splitting above, we can write
\begin{align}
    B(\rho_\epsilon \ast g_{q(\epsilon)-1}, \rho_\epsilon \ast g_{q(\epsilon)-1}) &= B(a_{q(\epsilon)-1}\rho_\epsilon \ast U_{q(\epsilon)-1}, a_{q(\epsilon)-1}\rho_\epsilon \ast U_{q(\epsilon)-1}) \notag\\
    &\quad + B(a_{q(\epsilon)-1}\rho_\epsilon \ast U_{q(\epsilon)-1}, \left[\rho_\epsilon \ast,a_{q(\epsilon)-1}\right]U_{q(\epsilon)-1}) \notag \\
    &\qquad + B(\left[\rho_\epsilon \ast,a_{q(\epsilon)-1}\right]U_{q(\epsilon)-1}, a_{q(\epsilon)-1}\rho_\epsilon \ast U_{q(\epsilon)-1}) \notag \\
    &\qquad \quad + B(\left[\rho_\epsilon \ast,a_{q(\epsilon)-1}\right]U_{q(\epsilon)-1}, \left[\rho_\epsilon \ast,a_{q(\epsilon)-1}\right]U_{q(\epsilon)-1}) \notag \\
    &= I + II + III + IV \, .  \label{1234}
\end{align}
We estimate $II$, $III$, and $IV$ using \ref{i:hls:7} in \texttt{Step 5b}; in \texttt{Step 5a} we use Proposition~\ref{prop:hhl} to estimate $I$.
\bigskip

\noindent\texttt{Step 5a: the high--high--low term $I$. }  Recall we had fixed a parameter $\delta'$ in the inductive hypotheses such that $\frac{1}{10}\delta < \delta' < \frac{1}{5}\delta$ and now we split $I$ using
$$ a_{q(\epsilon)-1} = \bp_{\leq \Lambda_{q(\epsilon)-1}^{\delta'}} (a_{q(\epsilon)-1}) + \bp_{> \Lambda_{q(\epsilon)-1}^{\delta'}} (a_{q(\epsilon)-1})   = a_{\rm low} + a_{\rm high} $$
as
\begin{align}
    I &= B(a_{\rm low}\rho_\epsilon \ast U_{q(\epsilon)-1}, a_{\rm low}\rho_\epsilon \ast U_{q(\epsilon)-1}) + B(a_{\rm low}\rho_\epsilon \ast U_{q(\epsilon)-1}, a_{\rm high}\rho_\epsilon \ast U_{q(\epsilon)-1}) \notag \\
    &\qquad + B(a_{\rm high}\rho_\epsilon \ast U_{q(\epsilon)-1}, a_{\rm low}\rho_\epsilon \ast U_{q(\epsilon)-1}) + B(a_{\rm high}\rho_\epsilon \ast U_{q(\epsilon)-1}, a_{\rm high}\rho_\epsilon \ast U_{q(\epsilon)-1}) \, . \label{eq:a:limit} 
\end{align}
We apply the Proposition to the first term with the following choices:
\begin{align*}
    \Lambda = \Lambda_{q(\epsilon)-1} \, , \qquad  U_\Lambda = U_{q(\epsilon)-1} \, , \qquad \delta \textnormal{ as in~\eqref{def:delta}}\, , \qquad \rho \in \mathcal{S} \textnormal{ as in Proposition~\ref{prop:hhl}} \, , \qquad  \\
    a = a_{\rm low} \, , \qquad \lambda = \Lambda_{q(\epsilon)-1}^{\delta'} \, .
\end{align*}
From Proposition~\ref{prop:intermittent_funcs}, $\Lambda$, $\Lambda^\delta$ are natural numbers.  From~\eqref{def:delta}, $\delta^{-1} \in \mathbb{N}$ and $\delta^{-1} \geq 6$. From the choice of $\lambda$ and the definition of $a_{\rm{low}}$, the condition $\Lambda^{\frac{\delta}{2}} > 2\lambda$ from Proposition~\ref{prop:hhl} is satisfied, and $a_{\rm{low}}$ has compact frequency support in a ball of radius $\lambda = \Lambda_{q(\epsilon)-1}^{\delta'}$.  Then from~\eqref{eq:hhl} and Remark~\ref{rem:reeses}, we have that
$$
B(a_{\rm{low}}\rho_\epsilon \ast U_{q(\epsilon)-1},a_{\rm{low}} \rho_\epsilon \ast U_{q(\epsilon)-1}) = C_{ij} \mathcal{R}_i \mathcal{R}_j a_{\rm{low}}^2 + \tilde{E}_{q(\epsilon)-1} \, ,
$$
where from~\eqref{e:estimate} and Lemma~\ref{lem:aq+1_props},
\begin{equation}\label{eq:bilin_form_error:limit}
    \Vert \tilde{E}_{q(\epsilon)-1} \Vert_X \leq \Vert \tilde{E}_{q(\epsilon)-1} \Vert_{H^{-100\delta^{-1}}(\R^3)} \lesssim \Lambda_{q(\epsilon)-1}^{\frac{5\delta-1}{2}} \Vert a_{\rm{low}} \Vert_{W^{4,\sfrac 65}(\R^3)}^2 \lesssim \Lambda_{q(\epsilon)-1}^{\frac{5\delta-1}{2}} \Vert a_{q(\epsilon)-1} \Vert_{W^{4,\sfrac 65}}^2 \lesssim  \lambda_{q(\epsilon)-1}^2 \Lambda_{q(\epsilon)-1}^{\frac{5\delta-1}{2}} \, .
\end{equation}
From~\eqref{eq:growth:bounds}, this converges to zero as $\epsilon\rightarrow 0$.  In addition, from~\eqref{ind:aq':bounds}, $C_{ij} \mathcal{R}_i \mathcal{R}_j a_{\rm low}^2\rightarrow 0$ in $X$ as $\epsilon \rightarrow 0$.

To handle the remaining terms from~\eqref{eq:a:limit}, all of which contain a factor of $a_{\rm high}$, we will use that $a_{\rm{high}} = (\Id - \bp_{\leq \Lambda_{q(\epsilon)-1}^{\delta'}})a_{q(\epsilon)-1}$, and that $(\Id - \bp_{\leq \Lambda_{q(\epsilon)-1}^{\delta'}}) a_{q(\epsilon)-1}$ satisfies a good bound due to Lemma~\ref{lem:good_kernel}.   Therefore we appeal to Lemma \ref{lem:proj_est}, Lemma \ref{lem:good_kernel}, Lemma \ref{lem:decoupling}, and Lemma \ref{lem:HLS-Holder} and choose $180\delta^{-1} < N < 190\delta^{-1}$ to see that
\begin{equation*}
    \begin{split}
    &\Vert B(a_{\rm{low}}\rho_\epsilon \ast U_{q(\epsilon)-1}, a_{\rm{high}}\rho_\epsilon \ast U_{q(\epsilon)-1}) \Vert_{H^{-10}(\R^3)}  \\
    &\qquad \lesssim \Vert a_{\rm{high}} \rho_\epsilon \ast U_{q(\epsilon)-1} \Vert_{L^1(\R^3)} \Vert a_{\rm{low}}\rho_\epsilon \ast U_{q(\epsilon)-1} \Vert_{W^{5,1}(\R^3)}\\
        &\qquad \lesssim \Vert a_{\rm{high}} \Vert_{W^{4,1}(\R^3)} \Vert U_{q(\epsilon)-1} \Vert_{L^1(\T^3)} \Vert a_{\rm{low}} \Vert_{W^{9,1}(\R^3)} \Vert U_{q(\epsilon)-1} \Vert_{W^{5,1}(\T^3)}\\
        &\qquad \lesssim \Vert a_{\rm{high}} \Vert_{W^{4,1}(\R^3)} \Vert a_{\rm{low}} \Vert_{W^{9,1}(\R^3)} \Vert U_{q(\epsilon)-1} \Vert_{W^{5,1}(\T^3)}^2\\
        &\qquad \lesssim \Lambda_{q(\epsilon)-1}^{-N\delta'} \lambda_{q(\epsilon)-1}^2 \Lambda_{q(\epsilon)-1}^{10} \Lambda_{q(\epsilon)-1}^{3\delta-1}\\
        &\qquad \lesssim \lambda_{q(\epsilon)-1}^2 \Lambda_{q(\epsilon)-1}^{-5} \, .
    \end{split}
\end{equation*}
Then from~\eqref{eq:growth:bounds}, this goes to zero as $\epsilon \rightarrow 0$.  Using very similar methods for $0 < \gamma \ll 1$ chosen small enough, we have
\begin{equation*}
    \begin{split}
        &\Vert B(a_{\rm{high}}\rho_\epsilon \ast U_{q(\epsilon)-1}, a_{\rm{low}}\rho_\epsilon \ast U_{q(\epsilon)-1}) \Vert_{H^{-10}(\R^3)} \\ &\qquad \lesssim \Vert a_{\rm{high}}\rho_\epsilon \ast U_{q(\epsilon)-1} \Vert_{L^{1+\gamma}(\R^3)} \Vert a_{\rm{low}} \rho_\epsilon \ast U_{q(\epsilon)-1} \Vert_{W^{5,1}(\R^3)}\\
        &\qquad \lesssim \Vert a_{\rm{high}} \Vert_{W^{4,1+\gamma}(\R^3)} \Vert U_{q(\epsilon)-1} \Vert_{L^{1+\gamma}(\T^3)} \Vert a_{\rm{low}} \Vert_{W^{9,1}(\R^3)} \Vert U_{q(\epsilon)-1} \Vert_{W^{5,1}(\T^3)}\\
        &\qquad \lesssim \Vert a_{\rm{high}} \Vert_{W^{5,1}(\R^3)} \Vert a_{\rm{low}} \Vert_{W^{9,1}(\R^3)} \Vert U_{q(\epsilon)-1} \Vert_{W^{5,1}(\T^3)}^2\\
        &\qquad \lesssim \Lambda_{q(\epsilon)-1}^{-N\delta'}\lambda_{q(\epsilon)-1}^2 \Lambda_{q(\epsilon)-1}^{10} \Lambda_{q(\epsilon)-1}^{3\delta-1}\\
        &\qquad \lesssim \lambda_{q(\epsilon)-1}^2 \Lambda_{q(\epsilon)-1}^{-5} \, ,
    \end{split}
\end{equation*}
and similarly
\begin{equation*}
    \begin{split}
        &\Vert B(a_{\rm{high}}\rho_\epsilon \ast U_{q(\epsilon)-1}, a_{\rm{high}}\rho_\epsilon \ast U_{q(\epsilon)-1}) \Vert_{H^{-10}(\R^3)}  \\
        &\qquad \lesssim \Vert a_{\rm{high}} \rho_\epsilon \ast U_{q(\epsilon)-1} \Vert_{L^1(\R^3)} \Vert a_{\rm{high}}\rho_\epsilon \ast U_{q(\epsilon)-1} \Vert_{W^{5,1}(\R^3)}\\
        &\qquad \lesssim  \Vert a_{\rm{high}} \Vert_{W^{4,1}(\R^3)} \Vert U_{q(\epsilon)-1} \Vert_{L^1(\T^3)} \Vert a_{\rm{high}} \Vert_{W^{9,1}(\R^3)} \Vert U_{q(\epsilon)-1} \Vert_{W^{5,1}(\T^3)}\\
        &\qquad \lesssim \Lambda_{q(\epsilon)-1}^{-2N\delta'} \lambda_{q(\epsilon)-1}^2 \Lambda_{q(\epsilon)-1}^{10} \Lambda_{q(\epsilon)-1}^{3\delta-1}\\
        &\qquad \lesssim \lambda_{q(\epsilon)-1}^2 \Lambda_{q(\epsilon)-1}^{-5}\, .
    \end{split}
\end{equation*}
Both approach zero as $\epsilon \rightarrow 0$ from~\eqref{eq:growth:bounds}.

\bigskip

\noindent\texttt{Step 5b: the commutator terms $II$, $III$, and $IV$.}
We start by estimating $II+III$ (to keep the symmetrized form of the bilinear operator).  From Lemma~\ref{lem:HLS-Holder}, Lemma~\ref{lem:decoupling},~\eqref{ind:aq':bounds}, Proposition~\ref{prop:intermittent_funcs}, the Sobolev embedding $W^{1,1}(\R^3) \subset W^{\sfrac 12,1}(\R^3) \subset L^{\sfrac 65}(\R^3)$, and Lemma~\ref{lem:com:bound} with $\alpha=1$, we have that
\begin{align*}
&\left\| B(a_{q(\epsilon)-1}\rho_\epsilon \ast U_{q(\epsilon)-1}, \left[\rho_\epsilon \ast,a_{q(\epsilon)-1}\right]U_{q(\epsilon)-1}) + B(\left[\rho_\epsilon \ast,a_{q(\epsilon)-1}\right]U_{q(\epsilon)-1}, a_{q(\epsilon)-1}\rho_\epsilon \ast U_{q(\epsilon)-1}) \right\|_{H^{-10}(\R^3)} \\
&\qquad \les \left\| a_{q(\epsilon)-1}\rho_\epsilon \ast U_{q(\epsilon)-1} \right\|_{L^{\sfrac 65}(\R^3)} \left\| \left[\rho_\epsilon \ast,a_{q(\epsilon)-1}\right]U_{q(\epsilon)-1} \right\|_{L^{\sfrac 65}(\R^3)} \\
&\qquad \les \lambda_{q(\epsilon)-1} \Lambda_{q(\epsilon)-1}^\delta \left(  C_\rho  R_{q(\epsilon)-1}^6 \lambda_{q(\epsilon)-1} \Lambda_{q(\epsilon)-1}^{\frac{3\delta-1}{2}} \right) \\
&\qquad \les  C_\rho  R_{q(\epsilon)-1}^6 \lambda_{q(\epsilon)-1}^2 \Lambda_{q(\epsilon)-1}^{\frac{5\delta-1}{2}}    \, .
\end{align*}
Using~\eqref{eq:growth:bounds} and the fact that $\delta \leq \sfrac{1}{6}$,  this term approaches zero as $\epsilon\rightarrow 0$. Finally, $IV$ satisfies even better bounds, since the commutator is in both slots (and so the bilinear form is automatically symmetrized), so we omit the details and conclude the estimates for $II$, $III$, and $IV$ from~\eqref{1234}.
\bigskip
    
\noindent\texttt{Step 5c: proof of Lemma~\ref{lem:com:bound}. } The commutator can be written as
\begin{equation}\label{eq:comm_form_1}
        \int_{\R^3} \rho_\epsilon(x-y) \left(a_{q(\epsilon)-1}(y) - a_{q(\epsilon)-1}(x)\right) U_{q(\epsilon)-1}(y)\, dy
    \end{equation}
    and
    \begin{equation}\label{eq:comm_form_2}
        \int_{\R^3} \rho_\epsilon(y) \left(a_{q(\epsilon)-1}(x-y) - a_{q(\epsilon)-1}(x)\right) U_{q(\epsilon)-1}(x-y)\, dy\, .
    \end{equation}
The specific representation of the commutator we utilize depends on the size of $\epsilon^{-1}$ compared to $\Lambda_{q(\epsilon)-1}$.  The former is useful when $\epsilon^{-1} \leq \Lambda_{q(\epsilon)-1}$, so that $\pa_x$ landing on the mollifier instead of $U_{q(\epsilon)-1}$ is helpful.  The latter is useful if $\epsilon^{-1}> \Lambda_{q(\epsilon)-1}$, so that $\pa_x$ landing on $U_{q(\epsilon)-1}$ is useful.
      
\begin{enumerate}
    \item We first assume that $\Lambda_{q(\epsilon)-1}^{\sfrac{\delta}{2}} < \epsilon^{-1} \leq \Lambda_{q(\epsilon)-1}$. We apply spatial derivatives $\nabla_x^m$ for fixed $m=0,1$ to~\eqref{eq:comm_form_1} and compute as follows.  Let $m_1, m_2 = 0, 1$ be such that $m_1+m_2 = m$. Using the periodicity of $U_{q(\epsilon)-1}$, we have that
\begin{equation}\label{eq:L1_com_est_1}
    \begin{split}
    &\left\Vert \int_{\R^3} \nabla_x^{m_1}\rho_\epsilon(x-y) \nabla_x^{m_2}\left(a_{q(\epsilon)-1}(y) - a_{q(\epsilon)-1}(x)\right) U_{q(\epsilon)-1}(y)\, dy \right\Vert_{L^1_x(\R^3)}\\
    &\lesssim \int_{\R^3} \int_{\R^3} |\nabla_x^{m_1}\rho_\epsilon(x-y)| |\nabla_x^{m_2}(a_{q(\epsilon)-1}(y) - a_{q(\epsilon)-1}(x))||U_{q(\epsilon)-1}(y)|\, dy\, dx \\
&= \sum_{k \in \Z^3} \int_{\T^3} |U_{q(\epsilon)-1}(y+k)| \int_{\R^3} |\nabla_x^{m_2}(a_{q(\epsilon)-1}(y+k)-a_{q(\epsilon)-1}(x))| |\nabla_x^{m_1}\rho_\epsilon(x-y-k)| \, dx \, dy \\
&= \int_{\T^3} |U_{q(\epsilon)-1}(y)| \sum_{k \in \Z^3} \int_{\R^3} |\nabla_x^{m_2}(a_{q(\epsilon)-1}(y+k)-a_{q(\epsilon)-1}(x))| |\nabla_x^{m_1}\rho_\epsilon(x-y-k)| \, dx \, dy \, . 
\end{split}
\end{equation}
For $|k| \leq  R_{q(\epsilon)-1}^2$, we may use the inductive bound on $a_{q(\epsilon)-1}$ from item~\ref{i:ind:ugh} and the $L^1(\T^3)$ bound on $U_{q(\epsilon)-1}$ from Proposition~\ref{prop:intermittent_funcs} to estimate
\begin{equation}\label{eq:L1_com_est_2}
    \begin{split}
    &\int_{\T^3} |U_{q(\epsilon)-1}(y)|  \sum_{|k| \leq R_{q(\epsilon)-1}^2} \int_{\R^3}  |\nabla_x^{m_2}(a_{q(\epsilon)-1}(y + k) - a_{q(\epsilon)-1}(x))| |\nabla_x^{m_1}\rho_{\epsilon}(x - y - k)|\, dx\, dy\\
    &\lesssim \int_{\T^3} |U_{q(\epsilon)-1}(y)| \lambda_{q(\epsilon)-1} \sum_{|k| \leq R_{q(\epsilon)-1}^2}\int_{\R^3} |x -y-k|^{1-\min(m_2,1)} |\nabla_x^{m_1}\rho_\epsilon(x-y-k)|\, dx\, dy\\
    &\lesssim \int_{\T^3} |U_{q(\epsilon)-1}(y)| \lambda_{q(\epsilon)-1} R_{q(\epsilon)-1}^6 \int_{\R^3} |x-y|^{1-\min(m_2,1)} |\nabla_x^{m_1}\rho_\epsilon(x-y)| \, dx \, dy \\
    &\leq C_\rho R_{q(\epsilon)-1}^6 \lambda_{q(\epsilon)-1} \epsilon^{1- \min(m_2,1)-m_1} \Lambda_{q(\epsilon)-1}^{\frac{3\delta-1}{2}} \, \\
    &\leq C_\rho R_{q(\epsilon)-1}^6 \lambda_{q(\epsilon)-1} \epsilon^{1-m} \Lambda_{q(\epsilon)-1}^{\frac{3\delta-1}{2}} \, .
            \end{split}
        \end{equation}
When $|k| \geq R_{q(\epsilon)-1}^2$, we split into two cases. If $|x| \geq \frac{1}{2}|k|$, then by the compact support of $a_{q(\epsilon)-1}$ given in item~\ref{i:ind:ugh}, $\nabla_x^{m_2}a_{q(\epsilon)-1}(x)\equiv 0$.  In addition, since $|k| \geq R_{q(\epsilon)-1}^2$ and $y \in \T^3$, $a_{q(\epsilon)-1}(y+k) \equiv 0$ as well. So it suffices to treat the case $|k| \geq R_{q(\epsilon)-1}^2$ under the additional assumption that $|x| \leq \frac{1}{2} |k|$.  Using that $\rho$ is Schwartz and $|y| \les 1$ since $y \in \T^3$ , we estimate
$$
|\nabla_x^{m_1}\rho_\epsilon(x-y-k)| \leq C_{\rho} \epsilon^{100-3-m_1}|x - y - k|^{-100} \leq C_{\rho} \epsilon^{97-m_1}|k|^{-100} \, . 
        $$
Hence
\begin{align}\label{eq:L1_com_est_3}
    &\int_{\T^3} |U_{q(\epsilon)-1}(y)|  \sum_{|k| \geq R_{q(\epsilon)-1}^2} \int_{\{|x| \leq \frac 12 |k|\}}  |\nabla_{x}^{m_2}(a_{q(\epsilon)-1}(y + k) - a_{q(\epsilon)-1}(x))| |\nabla_x^{m_1}\rho_{\epsilon}(x - y - k)|\, dx\, dy \notag \\
    &\lesssim \int_{\T^3} |U_{q(\epsilon)-1}(y)|  \sum_{|k| \geq R_{q(\epsilon)-1}^2} \lambda_{q(\epsilon)-1} \int_{\{|x| \leq \frac 12 |k|\}} |\nabla_x^{m_1}\rho_{\epsilon}(x - y - k)|\, dx\, dy \notag \\
    &\lesssim C_{\rho} \int_{\T^3} |U_{q(\epsilon)-1}(y)|  \sum_{|k| \geq R_{q(\epsilon)-1}^2} \lambda_{q(\epsilon)-1} \epsilon^{97-m_1} |k|^{3} |k|^{-100} \notag \\
    &\lesssim C_{\rho} R_{q(\epsilon)-1}^{-50} \lambda_{q(\epsilon)-1} \epsilon^{97-m_1} \Lambda_{q(\epsilon)-1}^{\frac{3\delta-1}{2}} \notag \\
    &\lesssim C_{\rho} \lambda_{q(\epsilon)-1} \epsilon^{1-m} \Lambda_{q(\epsilon)-1}^{\frac{3\delta-1}{2}} \, .
\end{align}
Then combining~\eqref{eq:L1_com_est_2} and \eqref{eq:L1_com_est_3} to estimate~\eqref{eq:L1_com_est_1}, we have that for $\alpha =0$ or $\alpha=1$,
\begin{align}
    &\left\Vert \int_{\R^3} \rho_\epsilon(x-y) \left(a_{q(\epsilon)-1}(y) - a_{q(\epsilon)-1}(x)\right) U_{q(\epsilon)-1}(y)\, dy \right\Vert_{W^{\alpha,1}(\R^3)} \lesssim R_{q(\epsilon)-1}^6 \lambda_{q(\epsilon)-1} \epsilon^{1-\alpha} \Lambda_{q(\epsilon)-1}^{\frac{3\delta-1}{2}} \, .  \label{eq:interpolation} 
\end{align}  
\item Now we assume $\Lambda_{q(\epsilon)-1} < \epsilon^{-1}$.   We apply spatial derivatives $\nabla_x^m$ for fixed $m=0,1$ to~\eqref{eq:comm_form_2} and compute as follows. Let $m_1, m_2 = 0 , 1$ be such that $m_1 + m_2 = m$. Using the periodicity of $U_{q(\epsilon)-1}$, we have that
\begin{equation}\label{eq:L1_com_est_1_redux}
    \begin{split}
    &\left\Vert \int_{\R^3} \rho_\epsilon(y) \nabla_x^{m_2}\left(a_{q(\epsilon)-1}(x-y) - a_{q(\epsilon)-1}(x)\right) \nabla_x^{m_1}U_{q(\epsilon)-1}(x-y)\, dy \right\Vert_{L^1_x(\R^3)}\\
    &\leq \int_{\R^3} \int_{\R^3} |\rho_\epsilon(y)| |\nabla_x^{m_2}(a_{q(\epsilon)-1}(x-y) - a_{q(\epsilon)-1}(x))||\nabla_x^{m_1}U_{q(\epsilon)-1}(x-y)|\, dx\, dy \\
&= \int_{\R^3} \int_{\R^3} |\rho_\epsilon(y)| |\nabla_{x'}^{m_2}(a_{q(\epsilon)-1}(x') - a_{q(\epsilon)-1}(x'+y))||\nabla_{x'}^{m_1}U_{q(\epsilon)-1}(x')|\, dx'\, dy \\
&= \int_{\R^3_y} \sum_{k \in \Z^3} \int_{\T^3_x} |\nabla_x^{m_1}U_{q(\epsilon)-1}(x'+k)| |\nabla_x^{m_2}(a_{q(\epsilon)-1}(x'+k)-a_{q(\epsilon)-1}(x'+k+y))| |\rho_\epsilon(y)| \,dx' \, dy \\
&= \sum_{k \in \Z^3} \int_{\R^3} \int_{\T^3} |\nabla_{x'}^{m_1}U_{q(\epsilon)-1}(x')| |\nabla_{x'}^{m_2}(a_{q(\epsilon)-1}(x'+k)-a_{q(\epsilon)-1}(x'+k-y))| |\rho_\epsilon(y)| \, dx' \, dy \, . 
\end{split}
\end{equation}
For $|k| \leq  R_{q(\epsilon)-1}^2$, we may use the inductive bound on $a_{q(\epsilon)-1}$ from item~\ref{i:ind:ugh}, the $L^1(\T^3)$ bound on $\nabla^{m_1} U_{q(\epsilon)-1}$ from Proposition~\ref{prop:intermittent_funcs}, and the inequality $\Lambda_{q(\epsilon)-1} \leq \epsilon^{-1}$ to estimate
\begin{equation}\label{eq:L1_com_est_2_redux}
    \begin{split}
    &\sum_{|k| \leq R_{q(\epsilon)-1}^2} \int_{\R^3}\int_{\T^3} |\nabla_{x'}^{m_1}U_{q(\epsilon)-1}(x')|    |\nabla_{x'}^{m_2}(a_{q(\epsilon)-1}(x'+k) - a_{q(\epsilon)-1}(x'+k-y))| |\rho_{\epsilon}(y)|\, dx'\, dy\\
    &\lesssim \sum_{|k| \leq R_{q(\epsilon)-1}^2}\int_{\R^3}\int_{\T^3} |\nabla_{x'}^{m_1}U_{q(\epsilon)-1}(x')| \lambda_{q(\epsilon)-1}  |y|^{1-\min(m_2,1)} |\rho_\epsilon(y)|\, dx'\, dy\\
    &\leq C_\rho R_{q(\epsilon)-1}^6 \lambda_{q(\epsilon)-1} \epsilon^{1- \min(m_2,1)} \Lambda_{q(\epsilon)-1}^{\frac{3\delta-1}{2}+m_1} \, \\
    &\leq C_\rho R_{q(\epsilon)-1}^6 \lambda_{q(\epsilon)-1} \epsilon^{1-\min(m_2, 1)-m_1} \Lambda_{q(\epsilon)-1}^{\frac{3\delta-1}{2}} \\
    &\leq C_\rho R_{q(\epsilon)-1}^6 \lambda_{q(\epsilon)-1} \epsilon^{1-m} \Lambda_{q(\epsilon)-1}^{\frac{3\delta-1}{2}} \, .
    \end{split}
    \end{equation}
When $|k| \geq R_{q(\epsilon)-1}^2$, then by the compact support of $a_{q(\epsilon)-1}$ given in item~\ref{i:ind:ugh} and the fact that $x' \in \T^3$,
$$ |\nabla_x^{m_2}(a_{q(\epsilon)-1}(x'+k) - a_{q(\epsilon)-1}(x'+k-y))| = |\nabla_x^{m_2}a_{q(\epsilon)-1}(x'+k-y)| \neq 0 $$
only if $|y-(-k)| \leq 2R_{q(\epsilon)-1}$. In addition, by the fact that $\rho$ is Schwartz, there exists $C_\rho$ such that
$$ \int_{\{|y-(-k)|\leq 2R_{q(\epsilon)-1}\}}|\rho_\epsilon(y)| \, dy = \int_{\{|\epsilon y'-(-k)|\leq 2R_{q(\epsilon)-1}\}}|\rho(y')| \, dy' \leq C_\rho R_{q(\epsilon)-1}^3 \epsilon^{-3+1000}|k|^{-1000} \, . $$
Then using the inductive bound on $a_{q(\epsilon)-1}$ from item~\ref{i:ind:ugh}, Sobolev embedding, and the $L^1(\T^3)$ bound on $\nabla^{m_1} U_{q(\epsilon)-1}$ from Proposition~\ref{prop:intermittent_funcs}, we have that
\begin{equation}\label{eq:L1_com_est_4_redux}
\begin{split}
   &\sum_{|k|\geq R_{q(\epsilon)-1}^2} \int_{\R^3} \int_{\T^3} |\nabla_{x'}^{m_1}U_{q(\epsilon)-1}(x')| |\nabla_{x'}^{m_2}(a_{q(\epsilon)-1}(x'+k)-a_{q(\epsilon)-1}(x'+k-y))| |\rho_\epsilon(y)| \, dx' \, dy \\
   &= \sum_{|k| \geq R_{q(\epsilon)-1}^2} \int_{\{|y-(-k)|\leq 2R_{q(\epsilon)-1}\}} \int_{\T^3} |\nabla_{x'}^{m_1} U_{q(\epsilon)-1}(x')| \lambda_{{q(\epsilon)-1}} |\rho_\epsilon(y)| \, dx' \, dy \\
   &\les \sum_{|k| \geq R_{q(\epsilon)-1}^2} C_\rho R_{q(\epsilon)-1}^3 \epsilon^{997} |k|^{-1000} \Lambda_{{q(\epsilon)-1}}^{\frac{3\delta-1}{2} + m_1} \\
   &\leq C_\rho R_{q(\epsilon)-1}^3 \Lambda_{q(\epsilon)-1}^{-1} \, .
    \end{split}
\end{equation}
Combining~\eqref{eq:L1_com_est_4_redux} with~\eqref{eq:L1_com_est_2_redux}, we have that for $\alpha =0$ or $\alpha=1$, 
$$ \left\| \int_{\R^3} \rho_\epsilon(y) \left(a_{q(\epsilon)-1}(x-y) - a_{q(\epsilon)-1}(x)\right) U_{q(\epsilon)-1}(x-y)\, dy \right\|_{W^{\alpha,1}(\R^3)} \leq C_\rho R_{q(\epsilon)-1}^6 \lambda_{q(\epsilon)-1} \Lambda_{q(\epsilon)-1}^{\alpha-1} \Lambda_{q(\epsilon)-1}^{\frac{3\delta-1}{2}} \, . $$
Combining this with~\eqref{eq:interpolation}, we obtain~\eqref{eq:comm_est}.  
\end{enumerate}

\section{Proof of Proposition~\ref{prop:ind:ks}}

Throughout this section we will be working with three parameters $R_{q+1} \ll \lambda_{q+1} \ll \Lambda_{q+1}$ all assumed to be integers. In addition, we will assume that $\Lambda_{q+1}^\delta \in \N$. The size of $R_{q+1}$ is allowed to depend on all constants fixed at stages $q$ and earlier from the iteration procedure and propagated in the inductive assumptions. Then $\lambda_{q+1}$ is allowed to depend on $R_{q+1}$, and finally $\Lambda_{q+1}$ is allowed to depend on $\lambda_{q+1}$.

\subsection{The increment}

\begin{definition}[\textbf{The increment}]\label{def:gq+1}
    Put $\chi_{q+1} := \chi_{R_{q+1}}$ from Definition \ref{def:projs}, and $U_{q+1} := U_{\Lambda_{q+1}}$ from Proposition \ref{prop:intermittent_funcs}, and define $g_{q+1}:\R^3 \to \R$ by
    $$
    g_{q+1} := a_{q+1}U_{q+1}
    $$
    where
    $$
    a_{q+1} := \left(2 \Vert E_q \Vert_{L^\infty(\R^3)} - E_q\right)^{1/2} \chi_{q+1} \, .
    $$
    The size of $\Lambda_{q+1}$, $\lambda_{q+1}$, and $R_{q+1}$ will be chosen based on the estimates in this section.
\end{definition}

\begin{lemma}[\textbf{Properties of $a_{q+1}$}]\label{lem:aq+1_props}
    Recall $a_{q+1}$ from Definition \ref{def:gq+1}. Then there exists $\lambda_{q+1}$ sufficiently large so that the following hold.
    \begin{enumerate}
        \item $a_{q+1}$ is smooth and compactly supported in $B(0,R_{q+1})$;
        \item $\Vert a_{q+1} \Vert_{W^{200\delta^{-1},1}(\R^3)} \leq \lambda_{q+1}$;
        \item $\left\| \left(\bp_{\leq \Lambda_{q+1}^{\delta'}} a_{q+1} \right)^2 \right\|_{X} \leq  C2^{-q-8}$ and $\left\| \mathcal{R}_i \mathcal{R}_j \left[ \left(\bp_{\leq \Lambda_{q+1}^{\delta'}} a_{q+1} \right)^2 \right] \right\|_{X} \leq  C2^{-q-5}$.
    \end{enumerate}
\end{lemma}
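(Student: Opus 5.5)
We define $F_q := 2\|E_q\|_{L^\infty(\R^3)} - E_q$. The plan is to treat $a_{q+1} = F_q^{1/2}\chi_{q+1}$ as a fixed smooth function once $R_{q+1}$ is chosen, and to choose $\lambda_{q+1}$ last among the quantities appearing in the statement. Items (1) and (2) follow from this choice; item (3) carries the real content.

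\textbf{Items (1) and (2).} If $E_q\equiv 0$, then $a_{q+1}\equiv 0$ and everything is trivial. Otherwise $F_q \geq \|E_q\|_{L^\infty} > 0$ everywhere. Since $E_q \in W^{k,\infty}$ for all $k$ by inductive item~\eqref{i:ind:1}, $F_q$ lies in $C^\infty$ with all derivatives bounded and is bounded below by a positive constant. Hence $F_q^{1/2}$ is smooth with all derivatives bounded, by the chain rule (Fa\`a di Bruno) applied to $t\mapsto t^{1/2}$ on $[\|E_q\|_{L^\infty}, 3\|E_q\|_{L^\infty}]$. Multiplying by $\chi_{q+1}=\chi(2\cdot/R_{q+1})$, which is supported in $B(0,R_{q+1})$, gives (1). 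Because $a_{q+1}$ is smooth and compactly supported, $\|a_{q+1}\|_{W^{200\delta^{-1},1}}$ is a finite number depending only on $E_q$ and $R_{q+1}$. We then simply choose $\lambda_{q+1}$ at least this large, which gives (2).

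\textbf{Item (3).} The second bound follows from the first by Lemma~\ref{lem:Riesz_bdd}, since $3\cdot C2^{-q-8}\le C2^{-q-5}$. For the first bound, decompose
\[
\bigl(\bp_{\le \Lambda_{q+1}^{\delta'}} a_{q+1}\bigr)^2 = a_{q+1}^2 + \Bigl[\bigl(\bp_{\le \Lambda_{q+1}^{\delta'}} a_{q+1}\bigr)^2 - a_{q+1}^2\Bigr],
\]
where
\[
a_{q+1}^2 = \chi_{q+1}^2\bigl(2\|E_q\|_{L^\infty}\bigr) - \chi_{q+1}^2 E_q .
\]
We then estimate the three pieces in turn.

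The first piece, $2\|E_q\|_{L^\infty}\chi_{q+1}^2$, is placed in the $\dot W^{1,4}$ slot of $X$. Its gradient is bounded by $C\|E_q\|_{L^\infty}R_{q+1}^{-1}$ and is supported on a set of volume $\lesssim R_{q+1}^3$, so its $L^4$ norm is $\lesssim \|E_q\|_{L^\infty}R_{q+1}^{-1/4}$. This can be made $\le C2^{-q-12}$ by taking $R_{q+1}$ large.

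The second piece, $\chi_{q+1}^2E_q$, should be close to $E_q$ in $X$. Take a near-optimal decomposition $E_q = E_{q,1}+E_{q,2}$ with
\[
\|E_{q,1}\|_{H^{-100\delta^{-1}}}+\|E_{q,2}\|_{\dot W^{1,4}} \le 2\|E_q\|_X \le C2^{-q-9}.
\]
Write $\chi_{q+1}^2E_q - E_q = (\chi_{q+1}^2-1)E_q$. Since $E_q\in L^2$, this tends to $0$ in $L^2\subset H^{-100\delta^{-1}}$ as $R_{q+1}\to\infty$. Hence $\|\chi_{q+1}^2E_q\|_X \le \|E_q\|_X + o(1)$, and a sufficiently large $R_{q+1}$ brings this below $C2^{-q-10}+C2^{-q-12}$.

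The third piece, the commutator-type difference, is where $\Lambda_{q+1}$ enters. By Lemma~\ref{lem:good_kernel} and Lemma~\ref{lem:proj_est},
\[
\Bigl\|\bigl(\bp_{\le \Lambda^{\delta'}} a\bigr)^2-a^2\Bigr\|_{L^2}
\le \bigl\|\bp_{>\Lambda^{\delta'}}a\bigr\|_{L^4}\,\bigl(\|\hat\chi\|_{L^1}+1\bigr)\|a\|_{L^4}
\lesssim \Lambda_{q+1}^{-\delta'}\|a\|_{W^{1,4}}\|a\|_{L^4}.
\]
Taking $\Lambda_{q+1}$ large relative to $\lambda_{q+1}$ makes this $\le C2^{-q-12}$, and it is absorbed in the $H^{-100\delta^{-1}}$ slot.

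Summing the three contributions gives the first bound of (3). Here $\Lambda_{q+1}$ is chosen after $\lambda_{q+1}$, consistent with the ordering convention of this section. The main obstacle is the second piece: one must pass from the infimum defining $\|E_q\|_X$ to a bound on $\|\chi_{q+1}^2E_q\|_X$ with only a small loss. Multiplying the individual decomposition components $E_{q,1}$, $E_{q,2}$ by $\chi_{q+1}^2$ is unpleasant in $H^{-100\delta^{-1}}$. The strategy above avoids this by writing $\chi_{q+1}^2E_q = E_q + (\chi_{q+1}^2-1)E_q$ and using only the qualitative fact $E_q\in L^2$ to control the second term.
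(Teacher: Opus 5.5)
Your proposal is correct and follows essentially the same route as the paper. Both use the splitting $a_{q+1}^2 = 2\|E_q\|_{L^\infty}\chi_{q+1}^2 - E_q + (1-\chi_{q+1}^2)E_q$, place the constant-times-cutoff term in the $\dot W^{1,4}$ slot, control the tail using $E_q\in L^2$, invoke Lemma~\ref{lem:good_kernel} with $\Lambda_{q+1}$ large for the low-frequency projection, and apply Lemma~\ref{lem:Riesz_bdd} for the Riesz transforms. The differences are cosmetic: you dispose of $E_q\equiv 0$ trivially rather than via the rigidity footnote, and you bound $(\bp_{\le \Lambda_{q+1}^{\delta'}}a_{q+1})^2-a_{q+1}^2$ by factoring and H\"older in $L^4$ instead of expanding. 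The near-optimal decomposition of $E_q$ you introduce is never used and can be dropped.
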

\begin{proof}
    The smoothness of $a_{q+1}$ is clear from the fact that $E_q$ and $\chi_{q+1}$ are smooth and
    $$
    0 < \frac{1}{2} \Vert E_q \Vert_{L^\infty(\R^3)} < 2\Vert E_q \Vert_{L^\infty(\R^3)} - E_q < \infty\, .\footnote{The first inequality holds since there is no $q$ such that $E_q$ is identically $0$. Otherwise the iteration procedure would terminate and we will have constructed a nontrivial solution to the stationary Krieger--Strain equation which lies in $C_c^\infty(\R^3) \subset L^{\sfrac 65}(\R^3)$. This contradicts Theorem \ref{prop:rigidity}.}
    $$
    The compact support within $B(0,R_{q+1})$ follows from Definition \ref{def:projs}. Now by definition we have
    \begin{equation}\label{eq:aq+1_est}
        \Vert a_{q+1} \Vert_{W^{200\delta^{-1},1}(\R^3)} = \sum_{|\alpha| \leq 200\delta^{-1}} \left\Vert \partial^\alpha\left[ (2\Vert E_q \Vert_{L^\infty(\R^3)} - E_q)^{1/2} \chi_{q+1} \right]\right\Vert_{L^1(\R^3)} \, .
    \end{equation}
    The quantity on the right-hand side of \eqref{eq:aq+1_est} depends only on $\delta$, $\chi$, $E_q$, and $R_{q+1}$. Upon choosing $\lambda_{q+1}$ large enough and depending on these parameters, we may ensure that
    $$
    \Vert a_{q+1} \Vert_{W^{200\delta^{-1},1}(\R^3)} \leq \lambda_{q+1} \, .
    $$
   Now we estimate $a_{q+1}^2$, before using a large choice of $\Lambda_{q+1}$ to deduce that the same bounds hold for $\bp_{\leq \Lambda_{q+1}^{\delta'}} a_{q+1}$.  We begin by writing
   $$
   \Vert a_{q+1}^2 \Vert_X = \Vert (2\Vert E_q \Vert_{L^\infty(\R^3)} - E_q)\chi_{q+1}^2\Vert_X \leq 2\Vert E_q \Vert_{L^\infty(\R^3)} \Vert \chi_{q+1}^2 \Vert_{\dot{W}^{1,4}(\R^3)} + \Vert E_q (1 - \chi_{q+1}^2) \Vert_{L^2(\R^3)} + \Vert E_q \Vert_X \, .
   $$ 
   Direct computation shows that
    $$
    \int_{|x|<R_{q+1}} \left| \nabla \left(\chi_{q+1}^2\right)\right|^4 \leq 256 \int_{|x| < R_{q+1}} \left|\nabla \chi_{q+1}\right|^4 \leq 512 R_{q+1}^{-1} \int_{|x| < 2} \left|\nabla \chi \right|^4 \leq 8192 R_{q+1}^{-1} \, .
    $$
    With this and the fact that $E_q \in L^2(\R^3)$, we may choose $R_{q+1}$ large enough such that
    $$
    2\Vert E_q \Vert_{L^\infty(\R^3)} \Vert \chi_{q+1}^2 \Vert_{\dot{W}^{1,4}(\R^3)} + \Vert E_q (\chi_{q+1}^2 -1)\Vert_{L^2(\R^3)} < C2^{-q-10} \, .
    $$
    Then using inductive assumption~\ref{i:ind:3}, we have
    $$
    \Vert a_{q+1}^2\Vert_X = \Vert \left(2\Vert E_q \Vert_{L^\infty(\R^3)} - E_q\right)\chi_{q+1}^2 \Vert_X < C2^{-q-8} \, .
    $$

Now we write
$$
\left[\mathbb{P}_{\leq \Lambda_{q+1}^{\delta'}} (a_{q+1})\right]^2 = a_{q+1}^2 - 2 a_{q+1} \mathbb{P}_{>\Lambda_{q+1}^{\delta'}}(a_{q+1}) + \left[\mathbb{P}_{>\Lambda_{q+1}^{\delta'}}(a_{q+1})\right]^2
$$
so then
$$
\left\Vert \left[\mathbb{P}_{\leq \Lambda_{q+1}^{\delta'}} (a_{q+1})\right]^2 \right\Vert_X \leq \left\Vert a_{q+1}^2 \right\Vert_X + 2 \Vert a_{q+1} \Vert_{L^\infty(\R^3)} \Vert \mathbb{P}_{>\Lambda_{q+1}^{\delta'}}(a_{q+1}) \Vert_{L^1(\R^3)} + \Vert \mathbb{P}_{>\Lambda_{q+1}^{\delta'}}(a_{q+1}) \Vert_{L^2(\R^3)}^2 \, .
$$
Appealing to Lemma \ref{lem:good_kernel}, we may choose $\Lambda_{q+1}$, depending on $\lambda_{q+1}$, large enough so that
$$
2 \Vert a_{q+1} \Vert_{L^\infty(\R^3)} \Vert \mathbb{P}_{>\Lambda_{q+1}^{\delta'}}(a_{q+1}) \Vert_{L^1(\R^3)} + \Vert \mathbb{P}_{>\Lambda_{q+1}^{\delta'}}(a_{q+1}) \Vert_{L^2(\R^3)}^2 \ll 1 \, .
$$
Thus for this sufficiently large choice of $\Lambda_{q+1}$ we have that
$$
\left\Vert \left[\mathbb{P}_{\leq \Lambda_{q+1}^{\delta'}} (a_{q+1})\right]^2 \right\Vert_X \leq C2^{-q-8} \, .
$$
Finally, applying Lemma \ref{lem:Riesz_bdd} we have that
$$
\Vert \mathcal{R}_i \mathcal{R}_j a_{q+1}^2 \Vert_X < 4 \Vert a_{q+1}^2 \Vert_X < C2^{-q-5} \, ,
$$
and from a large choice of $\Lambda_{q+1}$, the desired inequality holds for $\left(\bp_{\leq \Lambda_{q+1}^{\delta'}} a_{q+1}\right)^2$.
\end{proof}

\begin{lemma}[\textbf{Properties of $g_{q+1}$}]\label{lem:gq+1_est}
    For $g_{q+1}$ from Definition \ref{def:gq+1}, $1 \leq p \leq \infty$, and $k \leq 200\delta^{-1} - 8$, we have that (for an implicit constant which does not depend on $q$)
    \begin{equation*}
        \Vert g_{q+1} \Vert_{W^{k,p}(\R^3)} \lesssim \lambda_{q+1}\Lambda_{q+1}^{k + 1 + 3(1-\delta)\left(\frac{1}{2} - \frac{1}{p}\right)} \, ,
    \end{equation*}
and $g_{q+1}$ is smooth, nonnegative, and compactly supported in $B(0,R_{q+1})$.
\end{lemma}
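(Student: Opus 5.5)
The bound will come from combining the decoupling estimate, Lemma~\ref{lem:decoupling}, with the building block bounds \eqref{eq:U_est} and the amplitude bound in Lemma~\ref{lem:aq+1_props}. The one subtlety is that $U_{q+1}$ is periodic with period $\Lambda_{q+1}^{-\delta}$ rather than $1$. We use that it is therefore also $\Z^3$-periodic, since $\Lambda_{q+1}^\delta\in\N$. Hence we may regard $U_{q+1}$ as a function on $\T^3$ and apply Lemma~\ref{lem:decoupling} with $d=3$ directly:
\begin{equation*}
\Vert g_{q+1}\Vert_{W^{k,p}(\R^3)} = \Vert a_{q+1}U_{q+1}\Vert_{W^{k,p}(\R^3)} \lesssim_k \Vert a_{q+1}\Vert_{W^{k+4,p}(\R^3)}\Vert U_{q+1}\Vert_{W^{k,p}(\T^3)}.
\end{equation*}

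For the factor $U_{q+1}$, summing \eqref{eq:U_est} over $|\alpha|\le k$ gives
\begin{equation*}
\Vert U_{q+1}\Vert_{W^{k,p}(\T^3)}\lesssim \Lambda_{q+1}^{k+1+3(1-\delta)(\frac12-\frac1p)}.
\end{equation*}
This bound uses $\Lambda_{q+1}\ge1$. Its constants are independent of $\Lambda$ and $\delta$, so nothing depends on $q$.

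For the factor $a_{q+1}$, we must bound $\Vert a_{q+1}\Vert_{W^{k+4,p}}$ for arbitrary $p\in[1,\infty]$, whereas Lemma~\ref{lem:aq+1_props} only controls $\Vert a_{q+1}\Vert_{W^{200\delta^{-1},1}}\le\lambda_{q+1}$. We bridge this by Sobolev embedding in $L^1$: on $\R^3$, $W^{3,1}\subset L^\infty$, and interpolating gives $W^{m+3,1}\subset W^{m,p}$ for every $p\in[1,\infty]$, with a dimensional constant. Hence
\begin{equation*}
\Vert a_{q+1}\Vert_{W^{k+4,p}}\lesssim\Vert a_{q+1}\Vert_{W^{k+7,1}}\le \Vert a_{q+1}\Vert_{W^{200\delta^{-1},1}}\le\lambda_{q+1},
\end{equation*}
which requires $k+7\le200\delta^{-1}$. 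The hypothesis $k\le 200\delta^{-1}-8$ leaves one derivative of slack, which covers the extra derivative if one prefers the embedding $W^{4,1}\subset L^\infty$ rather than a sharp one. Multiplying the two factors gives the claimed estimate.

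The qualitative properties are immediate. Smoothness follows because $a_{q+1}$ is smooth by Lemma~\ref{lem:aq+1_props} and $U_{q+1}$ is smooth by Proposition~\ref{prop:intermittent_funcs}. Nonnegativity holds because $a_{q+1}=(2\Vert E_q\Vert_{L^\infty}-E_q)^{1/2}\chi_{q+1}\ge0$ and $U_{q+1}\ge0$ by item~\ref{i:block:1}. Compact support in $B(0,R_{q+1})$ is inherited from $\chi_{q+1}$.

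The only step needing care is the passage from $W^{\cdot,1}$ control of $a_{q+1}$ to $W^{\cdot,p}$ control, together with checking that no implicit constant depends on $q$. Both reduce to the fixed dimensional constants of the embedding and of Lemma~\ref{lem:decoupling}, where $d=3$ and $k$ is bounded in terms of $\delta$, which is fixed.
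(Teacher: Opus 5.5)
Your proposal is correct and follows the paper's proof. Like the paper, you apply Lemma~\ref{lem:decoupling} with $d=3$ to reduce to $\|a_{q+1}\|_{W^{k+4,p}}\|U_{q+1}\|_{W^{k,p}(\T^3)}$, use Sobolev embedding from $W^{\cdot,1}$ to invoke Lemma~\ref{lem:aq+1_props}, and bound the building block with \eqref{eq:U_est}. The differences are cosmetic: you use the sharp embedding $W^{3,1}\subset L^\infty$, so you need $W^{k+7,1}$ rather than the paper's $W^{k+8,1}$, and you make explicit the $\Z^3$-periodicity of $U_{q+1}$ coming from $\Lambda_{q+1}^\delta\in\N$, which the paper uses tacitly.
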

\begin{proof}
    Applying Lemma \ref{lem:decoupling}, Proposition \ref{prop:intermittent_funcs}, Lemma \ref{lem:aq+1_props}, and Sobolev embedding, we have
    \begin{equation*}
    \Vert g_{q+1} \Vert_{W^{k,p}(\R^3)} \lesssim \Vert a_{q+1} \Vert_{W^{k+8,1}(\R^3)} \Vert U_{q+1} \Vert_{W^{k,p}(\T^3)} \lesssim \lambda_{q+1} \Lambda_{q+1}^{k + 1 + 3(1-\delta)\left(\frac{1}{2} - \frac{1}{p}\right)} \, .
    \end{equation*}
The compact support follows from Definition~\ref{def:projs}, and the smoothness follows from Proposition~\ref{prop:intermittent_funcs}, Definition~\ref{def:projs}, and the smoothness of $E_q$, which implies the smoothness of $(2\| E_q \|_{L^\infty(\R^3)} - E_q)^{\sfrac 12}$.  The non-negativity follows from the non-negativity of $a_{q+1}$, $U_{q+1}$, and $\chi_{q+1}$. The support being contained in $B(0,R_{q+1})$ follows from the support of $a_{q+1}$ being contained in the same ball.
\end{proof}

\subsection{Proof of item \ref{i:ind:1}}\label{pf:item1}
With $f_q$ and $E_q$ given and $g_{q+1}$ constructed in Definition~\ref{def:gq+1}, we define
$$
f_{q+1} = f_q + g_{q+1} \qquad \textnormal{and} \qquad
E_{q+1} = B(f_{q+1},f_{q+1}) \, .
$$
We have that $f_{q+1}$ is Schwartz since $f_q$ is Schwartz by induction, and $g_{q+1}$ is Schwartz from Lemma~\ref{lem:gq+1_est}. Since $E_{q+1} = B(f_{q+1},f_{q+1})$, it is clear from Lemma~\ref{lem:HLS-Holder} and the fact that $f_{q+1}$ is Schwartz that $E_{q+1}$ is smooth. The non-negativity of $f_{q+1}$ follows from the non-negativity of $f_q$ (propagated inductively) and $g_{q+1}$ (from Lemma~\ref{lem:gq+1_est}). The same argument employed in the base case to show $E_0 \in L^2(\R^3) \cap W^{k,\infty}(\R^3)$ for all $k$ applies here as well since $f_{q+1} \in \mathcal{S}(\R^3)$, and so $E_{q+1} \in L^2(\R^3) \cap W^{k,\infty}(\R^3)$ for all $k$. Finally since \eqref{eq:rel} holds by definition, we also have \eqref{eq:weak_rel}.

\subsection{Proof of item \ref{i:ind:2}}\label{pf:item2}
First note that upon completion of this section, we will have chosen $\Lambda_{q+1}$, $\lambda_{q+1}$, and $R_{q+1}$. Next, we must prove \eqref{ind:f:estimates:1} at level $q+1$.
From Lemma \ref{lem:gq+1_est} we may choose $\Lambda_{q+1}$ large enough depending on $\lambda_{q+1}$ so that
$$
\Vert g_{q+1} \Vert_{L^1(\R^3)} < C^{-1}2^{-q-1} \, .
$$
As a consequence, we have
$$
\Vert f_{q+1} \Vert_{L^1(\R^3)} \geq \Vert f_q \Vert_{L^1(\R^3)} - \Vert g_{q+1} \Vert_{L^1(\R^3)} \geq C^{-1}(1 + 2^{-q}) - C^{-1}2^{-q-1} = C^{-1}(1 + 2^{-q-1}).
$$
The smoothness and non-negativity of $g_{q+1}$ follow from Lemma~\ref{lem:gq+1_est}. From Lemma \ref{lem:gq+1_est} for $k \leq 6$, we have that
$$
\Vert \nabla^k g_{q+1} \Vert_{L^1(\R^3)} \lesssim \lambda_{q+1} \Lambda_{q+1}^{k+\frac{3\delta-1}{2}}
$$
and
$$
\Vert \nabla^k g_{q+1} \Vert_{L^{p_0}(\R^3)} \lesssim \lambda_{q+1} \Lambda_{q+1}^{k+1+3(1-\delta)\left(\frac{1}{2} - \frac{1}{p_0}\right)} \, ,
$$
and so
$$
\Lambda_{q+1}^{-k}\left(\Vert \nabla^k g_{q+1} \Vert_{L^1(\R^3)} + \Vert \nabla^k g_{q+1} \Vert_{L^{p_0}(\R^3)}\right) \lesssim \lambda_{q+1}\Lambda_{q+1}^{1 + 3(1-\delta)\left(\frac{1}{2} - \frac{1}{p_0}\right)} \, .
$$
Since we have chosen $\delta$ small enough such that
$$
0<\delta < \frac{6-5p_0}{3(2-p_0)} = 1- \frac{1}{3\left(\frac{1}{p_0} - \frac{1}{2}\right)} \, ,
$$
we may choose $\Lambda_{q+1}$ large enough, depending on $\lambda_{q+1}$, such that
$$
\Lambda_{q+1}^{-k}\left(\Vert \nabla^k g_{q+1} \Vert_{L^1(\R^3)} + \Vert \nabla^k g_{q+1} \Vert_{L^{p_0}(\R^3)}\right) < C2^{-q-12} \, .
$$
Finally, upon choosing $\Lambda_{q+1}$ large enough depending on only $\lambda_{q+1}$ and $R_{q+1}$, we may ensure that
$$
\int_{\R^3} g_{q+1}(x)\exp(\langle x \rangle^{q+1})\, dx \leq \Vert g_{q+1}\Vert_{L^1(\R^3)} \exp(\langle R_{q+1} \rangle^{q+1}) < 2^{-q-1} \, .
$$

\subsection{Proof of item \ref{i:ind:ugh}}\label{pf:itemugh}
\eqref{Def:wq} at level $q+1$ follows from Definition \ref{def:gq+1}, as do the non-negativity and compact support of $a_{q+1}$.  Using the definition of $U_{q+1}= U_{\Lambda_{q+1}}$ from Definition~\ref{def:gq+1}, we have the desired properties of $U_{q+1}$ as well.  Next,~\ref{eq:growth:bounds} at level $q+1$ holds by induction for $q' \leq q$ in the first inequality and $q' \leq q-1$ in the second, and at level $q+1$ in the first inequality and $q$ in the second by a wise choice of $R_{q+1}$, $\lambda_{q+1}$, and $\Lambda_{q+1}$, in that order.  Finally, the bounds for $a_{q+1}$ in~\eqref{ind:aq':bounds} follow from Lemma~\ref{lem:aq+1_props}.

\subsection{Proof of item \ref{i:ind:3}}\label{pf:item3}
Recall that
\begin{equation*}
    \begin{split}
        E_{q+1} &= B(f_{q+1},f_{q+1})\\
        &= B(f_q,f_q) + B(f_q,g_{q+1}) + B(g_{q+1},f_q) + B(g_{q+1},g_{q+1})\\
        &= E_q + B(g_{q+1},g_{q+1}) + B(f_q,g_{q+1}) + B(g_{q+1},f_q) \, .
    \end{split}
\end{equation*}
We split the analysis into the linear error $B(f_q, g_{q+1}) + B(g_{q+1}, f_q)$ and the nonlinear error $E_q + B(g_{q+1}, g_{q+1})$, before combining estimates in the third step.
\bigskip

\noindent\texttt{Step 1: Nonlinear error. } We start by analyzing $E_q + B(g_{q+1},g_{q+1})$. Let us write
$$
a_{q+1,\rm{low}} = \mathbb{P}_{\leq \Lambda_{q+1}^{\delta'}}\left(a_{q+1}\right) \quad \text{and} \quad a_{q+1,\rm{high}} = \mathbb{P}_{> \Lambda_{q+1}^{\delta'}}\left(a_{q+1}\right) \, , \qquad \textnormal{so that} \qquad  a_{q+1} = a_{q+1, \rm{low}} + a_{q+1, \rm{high}}
$$
for some $\frac{1}{10}\delta < \delta' < \frac{1}{5}\delta$ which we had fixed in the inductive hypotheses. We now apply Proposition \ref{prop:hhl} with the choices
\begin{align*}
    U_\Lambda = U_{q+1} \, , \qquad \Lambda = \Lambda_{q+1} \, , \qquad \delta \textnormal{ as in~\eqref{def:delta}} \, , \qquad \rho = \delta_0 \, , \qquad a = a_{q+1, \rm{low}} \, , \qquad \lambda = \Lambda_{q+1}^{\delta'} \, , \qquad
    C_{ij} = -\delta_{ij} \, .
\end{align*}
From Proposition~\ref{prop:intermittent_funcs}, $\Lambda$, $\Lambda^\delta$ are natural numbers.  From~\eqref{def:delta}, $\delta^{-1} \in \mathbb{N}$ and $\delta^{-1} \geq 6$. From the choice of $\lambda$ and the definition of $a_{q+1, \rm{low}}$, the condition $\Lambda^{\frac{\delta}{2}} > 2\lambda$ from Proposition~\ref{prop:hhl} is satisfied, and $a_{q+1, \rm{low}}$ has compact frequency support in a ball of radius $\lambda = \Lambda_{q+1}^{\delta'}$.  Then from~\eqref{eq:hhl} and Remark~\ref{rem:reeses}, we have that
$$
B(a_{q+1,\rm{low}}U_{q+1},a_{q+1,\rm{low}}U_{q+1}) = a_{q+1,\rm{low}}^2 + \tilde{E}_{q+1} \, ,
$$
where from~\eqref{e:estimate} and Lemma~\ref{lem:aq+1_props},
\begin{equation}\label{eq:bilin_form_error}
    \Vert \tilde{E}_{q+1} \Vert_X \leq \Vert \tilde{E}_{q+1} \Vert_{H^{-100\delta^{-1}}(\R^3)} \lesssim \Lambda_{q+1}^{\frac{5\delta-1}{2}} \Vert a_{q+1,\rm{low}} \Vert_{W^{4,6/5}(\R^3)}^2 \lesssim \Lambda_{q+1}^{\frac{5\delta-1}{2}} \Vert a_{q+1} \Vert_{W^{4,6/5}}^2 \lesssim  \lambda_{q+1}^2 \Lambda_{q+1}^{\frac{5\delta-1}{2}} \, .
\end{equation}

Now we may write
$$
a_{q+1,\rm{low}}^2 = a_{q+1}^2 - 2a_{q+1}a_{q+1,\rm{high}} + a_{q+1,\rm{high}}^2 \, ,
$$
which implies that
\begin{align}
E_q + B(a_{q+1, \rm{low}} U_{q+1} , a_{q+1, \rm{low}} U_{q+1}) &= E_q + a_{q+1}^2 - 2a_{q+1} a_{q+1, \rm{high}} + a_{q+1,\rm{high}}^2 + \tilde{E}_{q+1} \label{eq:error:growing} \\
&=  E_q + \chi_{q+1}^2 \left( 2 \|E_q\|_{L^\infty(\R^3)} - E_q\right) - 2a_{q+1} a_{q+1, \rm{high}} + a_{q+1,\rm{high}}^2 + \tilde{E}_{q+1} \, . \notag
\end{align}
Estimating the right-hand side and using that $L^1(\R^3)$ embeds in $H^{-100\delta^{-1}}(\R^3)$ and H\"older's inequality to estimate the terms with $a_{q+1, \rm{high}}$ in $L^1(\R^3)$, we obtain
\begin{equation}\label{eq:bilin_form_est}
    \begin{split}
        \Vert E_q + B(a_{q+1,\rm{low}}U_{q+1},a_{q+1,\rm{low}}U_{q+1}) \Vert_X &\leq \Vert (1- \chi_{q+1}^2)E_q\Vert_{L^2(\R^3)} + 2\Vert E_q\Vert_{L^\infty(\R^3)} \Vert \chi_{q+1}^2 \Vert_{\dot{W}^{1,4}(\R^3)} + \Vert \tilde{E}_{q+1} \Vert_{X}\\
        &+ 2\Vert a_{q+1} \Vert_{L^2(\R^3)} \Vert a_{q+1,\rm{high}} \Vert_{L^2(\R^3)} + \Vert a_{q+1,\rm{high}} \Vert_{L^2(\R^3)}^2 \, .
    \end{split}
\end{equation}
We will bound the right-hand side of \eqref{eq:bilin_form_est} term-by-term. First, since $E_q \in L^2(\R^3)$, absolute continuity of the integral gives a choice of $R_{q+1}$ large enough such that
\begin{equation}\label{eq:abs_cont_integral}
    \Vert (1-\chi_{q+1}^2)E_q \Vert_{L^2(\R^3)} < C2^{-q-100} \, .
\end{equation}
For the second term, we compute
$$
\int_{|x|<R_{q+1}} \left| \nabla \left(\chi_{q+1}^2\right)\right|^4 \leq 256 \int_{|x| < R_{q+1}} \left|\nabla \chi_{q+1}\right|^4 \leq 512 R_{q+1}^{-1} \int_{|x| < 2} \left|\nabla \chi \right|^4 \leq 8192 R_{q+1}^{-1} \, ,
$$
and so one can choose $R_{q+1}$ large enough such that
\begin{equation}\label{eq:homogeneous_sob_est}
    2\Vert E_q \Vert_{L^\infty(\R^3)} \Vert \chi_{q+1}^2 \Vert_{\dot{W}^{1,4}(\R^3)} < C2^{-q-100} \, .
\end{equation}
From \eqref{eq:bilin_form_error}, we may choose $\Lambda_{q+1}$ large enough depending on $\lambda_{q+1}$ such that
\begin{equation}\label{eq:bilin_form_error_2}
    \Vert \tilde{E}_{q+1} \Vert_X < C2^{-q-100} \, .
\end{equation}
Now combining Lemma \ref{lem:aq+1_props} and Lemma \ref{lem:good_kernel} we may choose $\Lambda_{q+1}$ large enough, depending on $\lambda_{q+1}$, such that
\begin{equation}\label{eq:aq+1high_est1}
    2\Vert a_{q+1} \Vert_{L^2(\R^3)} \Vert a_{q+1,\rm{high}} \Vert_{L^2(\R^3)} < C2^{-q-100}.
\end{equation}
Similarly we may ensure
\begin{equation}\label{eq:aq+1high_est2}
    \Vert a_{q+1,\rm{high}} \Vert_{L^2(\R^3)}^2 < C2^{-q-100} \, .
\end{equation}
Hence combining \eqref{eq:abs_cont_integral}, \eqref{eq:homogeneous_sob_est}, \eqref{eq:bilin_form_error_2}, \eqref{eq:aq+1high_est1}, and \eqref{eq:aq+1high_est2}, and choosing $R_{q+1}$, then $\lambda_{q+1}$, then $\Lambda_{q+1}$ sufficiently large, we may bound \eqref{eq:bilin_form_est} by
\begin{equation}\label{eq:oscillation_error_cancellation}
    \Vert E_q + B(a_{q+1,\rm{low}}U_{q+1},a_{q+1,\rm{low}}U_{q+1})\Vert_X < C2^{-q-97} \, .
\end{equation}

For the remaining terms in the nonlinear error, we will use that $a_{q+1, \rm{high}} = (\Id - \bp_{\leq \Lambda_{q+1}^{\delta'}})a_{q+1}$, and that $(\Id - \bp_{\leq \Lambda_{q+1}^{\delta'}}) a_{q+1}$ satisfies a good bound due to Lemma~\ref{lem:good_kernel}.   Therefore we appeal to Lemma \ref{lem:proj_est}, Lemma \ref{lem:good_kernel}, Lemma \ref{lem:decoupling}, and Lemma \ref{lem:HLS-Holder} and choose $180\delta^{-1} < N < 190\delta^{-1}$ to see that
\begin{equation*}
    \begin{split}
        \Vert B(a_{q+1,\rm{low}}U_{q+1}, a_{q+1,\rm{high}}U_{q+1}) \Vert_{H^{-10}(\R^3)} &\lesssim \Vert a_{q+1,\rm{high}} U_{q+1} \Vert_{L^1(\R^3)} \Vert a_{q+1,\rm{low}}U_{q+1} \Vert_{W^{5,1}(\R^3)}\\
        &\lesssim \Vert a_{q+1,\rm{high}} \Vert_{W^{4,1}(\R^3)} \Vert U_{q+1} \Vert_{L^1(\T^3)} \Vert a_{q+1,\rm{low}} \Vert_{W^{9,1}(\R^3)} \Vert U_{q+1} \Vert_{W^{5,1}(\T^3)}\\
        &\lesssim \Vert a_{q+1,\rm{high}} \Vert_{W^{4,1}(\R^3)} \Vert a_{q+1,\rm{low}} \Vert_{W^{9,1}(\R^3)} \Vert U_{q+1} \Vert_{W^{5,1}(\T^3)}^2\\
        &\lesssim \Lambda_{q+1}^{-N\delta'} \lambda_{q+1}^2 \Lambda_{q+1}^{10} \Lambda_{q+1}^{3\delta-1}\\
        &\lesssim \lambda_{q+1}^2 \Lambda_{q+1}^{-5} \, .
    \end{split}
\end{equation*}
Then choosing $\Lambda_{q+1}$ large enough depending on $\lambda_{q+1}$, we may ensure that
\begin{equation}\label{eq:B_alow_ahigh}
    \Vert B(a_{q+1,\rm{low}}U_{q+1}, a_{q+1,\rm{high}}U_{q+1}) \Vert_{H^{-10}(\R^3)} < C2^{-q-100} \, .
\end{equation}
Using very similar methods for $0 < \gamma \ll 1$ chosen small enough we have
\begin{equation*}
    \begin{split}
        &\Vert B(a_{q+1,\rm{high}}U_{q+1}, a_{q+1,\rm{low}}U_{q+1}) \Vert_{H^{-10}(\R^3)} \\ &\qquad \lesssim \Vert a_{q+1,\rm{high}}U_{q+1} \Vert_{L^{1+\gamma}(\R^3)} \Vert a_{q+1,\rm{low}}U_{q+1} \Vert_{W^{5,1}(\R^3)}\\
        &\qquad \lesssim \Vert a_{q+1,\rm{high}} \Vert_{W^{4,1+\gamma}(\R^3)} \Vert U_{q+1} \Vert_{L^{1+\gamma}(\T^3)} \Vert a_{q+1,\rm{low}} \Vert_{W^{9,1}(\R^3)} \Vert U_{q+1} \Vert_{W^{5,1}(\T^3)}\\
        &\qquad \lesssim \Vert a_{q+1,\rm{high}} \Vert_{W^{5,1}(\R^3)} \Vert a_{q+1,\rm{low}} \Vert_{W^{9,1}(\R^3)} \Vert U_{q+1} \Vert_{W^{5,1}(\T^3)}^2\\
        &\qquad \lesssim \Lambda_{q+1}^{-N\delta'}\lambda_{q+1}^2 \Lambda_{q+1}^{10} \Lambda_{q+1}^{3\delta-1}\\
        &\qquad \lesssim \lambda_{q+1}^2 \Lambda_{q+1}^{-5} \, ,
    \end{split}
\end{equation*}
and similarly
\begin{equation*}
    \begin{split}
        &\Vert B(a_{q+1,\rm{high}}U_{q+1}, a_{q+1,\rm{high}}U_{q+1}) \Vert_{H^{-10}(\R^3)}  \\
        &\qquad \lesssim \Vert a_{q+1,\rm{high}} U_{q+1} \Vert_{L^1(\R^3)} \Vert a_{q+1,\rm{high}}U_{q+1} \Vert_{W^{5,1}(\R^3)}\\
        &\qquad \lesssim  \Vert a_{q+1,\rm{high}} \Vert_{W^{4,1}(\R^3)} \Vert U_{q+1} \Vert_{L^1(\T^3)} \Vert a_{q+1,\rm{high}} \Vert_{W^{9,1}(\R^3)} \Vert U_{q+1} \Vert_{W^{5,1}(\T^3)}\\
        &\qquad \lesssim \Lambda_{q+1}^{-2N\delta'} \lambda_{q+1}^2 \Lambda_{q+1}^{10} \Lambda_{q+1}^{3\delta-1}\\
        &\qquad \lesssim \lambda_{q+1}^2 \Lambda_{q+1}^{-5} \, .
    \end{split}
\end{equation*}
By choosing $\Lambda_{q+1}$ large enough, we obtain
\begin{equation}\label{eq:B_ahigh_alow}
    \Vert B(a_{q+1,\rm{high}}U_{q+1}, a_{q+1,\rm{low}}U_{q+1}) \Vert_{H^{-10}(\R^3)} < C2^{-q-100}
\end{equation}
and
\begin{equation}\label{eq:B_ahigh_ahigh}
    \Vert B(a_{q+1,\rm{high}}U_{q+1}, a_{q+1,\rm{high}}U_{q+1}) \Vert_{H^{-10}(\R^3)} < C2^{-q-100} \, .
\end{equation}
\bigskip

\noindent\texttt{Step 2: Linear errors. }  This leaves estimating $B(f_q,g_{q+1})$ and $B(g_{q+1},f_q)$. From Lemma \ref{lem:HLS-Holder}, items \ref{i:hls:4} and \ref{i:hls:6}, we have
$$
\Vert B(f_q,g_{q+1}) \Vert_{H^{-10}(\R^3)} \lesssim \Vert g_{q+1} \Vert_{L^1(\R^3)} \Vert f_q \Vert_{W^{5,1}(\R^3)}
$$
and
$$
\Vert B(g_{q+1},f_q) \Vert_{H^{-10}(\R^3)} \lesssim \Vert g_{q+1} \Vert_{L^{1+\gamma}(\R^3)} \Vert f_q \Vert_{W^{5,1}(\R^3)}
$$
for some fixed $0 < \gamma \ll 1/5$.
Since $\Vert f_q \Vert_{W^{5,1}(\R^3)}$ is a finite constant which is independent of our level $q+1$ parameters, upon appealing to Lemma \ref{lem:gq+1_est} and choosing $\Lambda_{q+1}$ large enough depending on $\lambda_{q+1}$ and these level $q$-constants one can enforce
\begin{equation}\label{eq:Nash_est_1}
    \Vert B(f_q,g_{q+1}) \Vert_{H^{-10}(\R^3)} < C2^{-q-100}
\end{equation}
and
\begin{equation}\label{eq:Nash_est_2}
    \Vert B(g_{q+1},f_q) \Vert_{H^{-10}(\R^3)} < C2^{-q-100} \, .
\end{equation}
\bigskip

\noindent\texttt{Step 3: Combining bounds. }  Combining \eqref{eq:oscillation_error_cancellation}, \eqref{eq:B_alow_ahigh}, \eqref{eq:B_ahigh_alow}, \eqref{eq:B_ahigh_ahigh}, \eqref{eq:Nash_est_1}, and \eqref{eq:Nash_est_2} we have
$$
\Vert E_{q+1} \Vert_X < C2^{-q-97} + 5C2^{-q-100} < C2^{-q-90} < C2^{-q-11} \, ,
$$
which completes the proof.

\section{Rigidity}
We begin by quickly recalling the formal conservation laws of the Krieger--Strain equation which have previously been observed in the literature; see e.g.~\cite{GZSurvey}.
\begin{lemma}[\textbf{Formal conservation laws and H-theorem}]\label{lem:conservation_laws}
Suppose $f \colon [0,T]\times \R^3 \to \R^+$ is a smooth, rapidly decaying solution to the Krieger--Strain equation. Then, conservation of mass and momentum hold: 
\begin{equation*}
    \frac{d}{dt} \int_{\R^3} f(v) \, dv = \frac{d}{dt}\int_{\R^3} v_i f(v) \, dv = 0 \, .
\end{equation*}
Additionally, kinetic energy increases: 
\begin{equation*}
    \frac{d}{dt} \int_{\R^3} \abs{v}^2 f \, dv =  \frac{1}{\pi} \int_{\R^3}\int_{\R^3} \frac{f(v)f(w)}{\abs{v-w}} \, dw \, dv = 4 \norm{f}_{\dot{H}^{-1}}^2 \, .
\end{equation*}
Finally, the $H$-theorem takes the form
\begin{equation*}
    \frac{d}{dt} \int_{\R^3} f\log f \, dv = - \frac{1}{2\pi}\int_{\R^3}\int_{\R^3} \frac{\abs{\nabla_{v-w} \sqrt{f(v)f(w)}}^2}{\abs{v-w}} \, dv \, dw \le 0 \, .
\end{equation*}

\end{lemma}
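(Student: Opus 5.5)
The plan is to test the equation against a small family of polynomial-type test functions. Each identity in Lemma~\ref{lem:conservation_laws} is obtained by pairing $\partial_t f = Q_{\rm KS}(f)$ with one such function, so I would start from the symmetrized weak form \eqref{eq:collisional_weak_form},
\begin{equation*}
\langle Q_{\rm KS}(f),\varphi\rangle = \frac{1}{8\pi}\int_{\R^3}\int_{\R^3} f(v)f(w)\,(\nabla_v-\nabla_w)\cdot\left(\frac{\nabla\varphi(v)-\nabla\varphi(w)}{\abs{v-w}}\right)\dd v\dd w ,
\end{equation*}
and evaluate the kernel explicitly. Mass and momentum ($\varphi=1,v_i$) give a vanishing kernel, since $\nabla\varphi(v)-\nabla\varphi(w)=0$. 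Energy ($\varphi=\abs{v}^2$) gives $\nabla\varphi(v)-\nabla\varphi(w)=2(v-w)$. Setting $z=v-w$, the operator $(\nabla_v-\nabla_w)$ acts as $2\nabla_z$ on functions of $z$, so the kernel equals $2\nabla_z\cdot(2z/\abs{z}) = 4\cdot 2/\abs{z} = 8/\abs{v-w}$. Hence, formally,
\begin{equation*}
\langle Q_{\rm KS}(f),\abs{v}^2\rangle = \frac{1}{\pi}\iint \frac{f(v)f(w)}{\abs{v-w}} \dd v\dd w = 4\norm{f}_{\dot H^{-1}}^2 .
\end{equation*}
The entropy identity is the standard one: pair with $\log f$, integrate by parts in divergence form, symmetrize, and use $\nabla_{v}f(v)f(w)-f(v)\nabla_w f(w) = f(v)f(w)(\nabla_v-\nabla_w)\log(f(v)f(w))$ to produce the Fisher-type square $\abs{(\nabla_v-\nabla_w)\sqrt{f(v)f(w)}}^2$.

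For a smooth, rapidly decaying solution, I would justify these steps as follows. Since $\partial_t f = Q_{\rm KS}(f)$ pointwise, we have $\frac{d}{dt}\int f\varphi = \int \varphi\, Q_{\rm KS}(f)$ for polynomially growing $\varphi$, because $f$ decays rapidly. The passage to the double integral is then the integration by parts behind \eqref{eq:collisional_weak_form}, applied with polynomial $\varphi$ instead of compactly supported $\varphi$. Boundary terms vanish because $f$ and $\nabla f$ decay rapidly while $\abs{v-w}^{-1}$ is locally integrable and bounded at infinity. A cleaner route is to use $\varphi\chi_R$ with the cutoff from Definition~\ref{def:projs} and send $R\to\infty$ by dominated convergence. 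The error terms then involve $\nabla\chi_R$, which is supported where $\abs{v}\sim R$, and they are controlled by the weighted moments of $f$.

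The main obstacle is the entropy identity. The integrand contains $\log f$ and $\nabla f/f$, so I would need $f>0$, or else regularize via $\log(f+\eta)$ and pass to the limit $\eta\to0$ using monotone convergence for the nonnegative dissipation term. The energy computation is the part used later for Theorem~\ref{prop:rigidity}. There, the same kernel identity paired with the truncated $\abs{v}^2\chi_R$ should give $0 = 4\norm{f}_{\dot H^{-1}}^2 + o(1)$ as $R\to\infty$, using \eqref{eq:weak:65}-type bounds for $f\in L^{\sfrac65}$.
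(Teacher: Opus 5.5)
Your proposal is correct and follows essentially the paper's proof: test the symmetrized collisional weak form with $1$, $v_i$, $\abs{v}^2$ and $\log f$. There are two small differences. The paper reaches the energy kernel $(\nabla_v-\nabla_w)\cdot\frac{2(v-w)}{\abs{v-w}}=\frac{8}{\abs{v-w}}$ by one further integration by parts from the once-integrated form, and it uses mass conservation to drop $\int\partial_t f$ from $\frac{d}{dt}\int f\log f$, a step you should state explicitly. Your added justifications go beyond the paper's formal proof, but one of them needs a fix: for fixed $\eta>0$ the regularized dissipation integrand is an indefinite quadratic form in $(\nabla f(v),\nabla f(w))$ whenever $f(v)\neq f(w)$, so monotone convergence does not apply. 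Take the limit $\eta\to0$ by dominated convergence instead, e.g.\ assuming finite Fisher information.
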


\begin{proof}
Writing the Krieger--Strain equation in collisional form,
\begin{equation*}
    \partial_t f = \nabla_v \cdot \int_{\R^3} \frac{\nabla_{v-w}(f(v)f(w))}{4\pi \abs{v-w}} \, dw \, .
\end{equation*}
Multiplying by a test function $\varphi$ and integrating by parts in the collision operator, we obtain:
\begin{equation*}
    \int_{\R^3} \varphi \partial_t f \, dv = -\int_{\R^3}\int_{\R^3}  \nabla_v \varphi(v) \cdot \frac{(\nabla_v - \nabla_w)(f(v)f(w))}{4\pi \abs{v-w}} \, dw \, dv \, .
\end{equation*}
Symmetrizing in $v$ and $w$, we find
\begin{equation}\label{eq:symmetrized_weak_form}
    \int_{\R^3} \varphi \partial_t f \, dv = -\frac{1}{2}\int_{\R^3}\int_{\R^3}  \left[\nabla_v \varphi(v) - \nabla_w\varphi(w) \right] \cdot \frac{(\nabla_v - \nabla_w)(f(v)f(w))}{4\pi \abs{v-w}} \, dw \, dv \, .
\end{equation}
Taking the choices $\varphi(v) = 1$ and $\varphi(v) = v$, we deduce conservation of mass and momentum immediately. Taking $\varphi(v) = \abs{v}^2$, we find a first formula for the kinetic energy growth:
\begin{equation*}
    \frac{d}{dt} \int_{\R^3} \abs{v}^2 f \, dv = -\int_{\R^3}\int_{\R^3}  \frac{v-w}{4\pi\abs{v-w}} \cdot (\nabla_v - \nabla_w)(f(v)f(w)) \, dw \, dv \, .
\end{equation*}
Notice that the integrand depends entirely on the direction $v-w$. Thus, integrating by parts, we find a sign:
\begin{equation*}
\begin{aligned}
    \frac{d}{dt} \int_{\R^3} \abs{v}^2 f \,  dv &= \frac{1}{4\pi}\int_{\R^3}\int_{\R^3}  \left(\nabla_v -\nabla_w\right)\cdot \left(\frac{v-w}{\abs{v-w}}\right) f(v)f(w) \, dw \, dv\\
        &= \frac{1}{2\pi}\int_{\R^3}\int_{\R^3}  \left(\nabla \cdot \frac{z}{\abs{z}}\right)\bigg|_{z=v-w} f(v)f(w) \, dw \, dv\\
        &= \frac{1}{\pi}\int_{\R^3}\int_{\R^3}  \frac{f(v)f(w)}{\abs{v-w}} \, dw \, dv \, .
\end{aligned}
\end{equation*}
Lastly, the $H$-theorem follows from \eqref{eq:symmetrized_weak_form} as well. By conservation of mass, we have
\begin{equation*}
    \frac{d }{dt}\int_{\R^3} f \log(f) \, dv = \int_{\R^3} \log(f)\partial_t f \, dv + \int_{\R^3} \partial_t f \, dv = \int_{\R^3} \log(f)\partial_t f \, dv \, .
\end{equation*}
Taking $\varphi = \log(f)$ in \eqref{eq:symmetrized_weak_form}, we find
\begin{equation*}
\begin{aligned}
    \frac{d}{dt}\int_{\R^3} f \log(f) \, dv &=  -\frac{1}{8\pi}\int_{\R^3}\int_{\R^3}  \left[\nabla_v \log(f) - \nabla_w\log (f) \right] \cdot \frac{(\nabla_v - \nabla_w)(f(v)f(w))}{\abs{v-w}} \, dw \, dv\\
        &= -\frac{1}{8\pi}\int_{\R^3}\int_{\R^3}  \left[\frac{\nabla_v f}{f(v)} - \frac{\nabla_w f}{f(w)}\right] \cdot \frac{f(w)\nabla_v f - f(v)\nabla_w f}{\abs{v-w}} \, dw \, dv\\
        &= -\frac{1}{8\pi}\int_{\R^3}\int_{\R^3} \frac{f(v)f(w)}{\abs{v-w}} \abs{\frac{\nabla_v f}{f(v)} - \frac{\nabla_w f}{f(w)}}^2 \, dw \, dv \, .
\end{aligned}
\end{equation*}
This concludes the proof of the claimed formal identities.
\end{proof}

The formal proof of Theorem \ref{prop:rigidity} is now clear. For a stationary solution $f$, each time derivative in Lemma \ref{lem:conservation_laws} vanishes. In particular, the kinetic energy production, computed as the $\dot{H}^{-1}$ seminorm vanishes. Consequently, $f$ is constant and from a global integrability constraint we conclude $f \equiv 0$. The remaining work is to extend this formal argument to the integrability class $L^{\sfrac65}(\R^3)$ specified in Theorem \ref{prop:rigidity}.

\begin{proof}[Proof of Theorem~\ref{prop:rigidity}]
    Suppose $f:\R^3\to \R^+$ belongs to $L^{\sfrac 65}(\R^3)$ and $f$ is a stationary weak solution to Krieger--Strain $Q_{\rm KS}(f) = 0$ in the sense of Definition \ref{def:weak_soln}. More precisely, this means that for any $\varphi\in C^\infty_c(\R^3)$,
    \begin{equation}
        \int_{\R^3}\int_{\R^3} \nabla_{v-w} \cdot \left(\frac{\nabla\varphi(v) - \nabla\varphi(w)}{8\pi\abs{v-w}}\right) f(v)f(w) \, dv\, dw = 0 \, . \label{eq:weak:1}
    \end{equation}
    Expanding~\eqref{eq:weak:1} yields
    \begin{equation}\label{eq:truncated_equality}
        \int_{\R^3}\int_{\R^3} \left(\frac{ {\Delta_v \varphi  + \Delta_w\varphi}}{8\pi\abs{v-w}} {-} \frac{(v-w)\cdot (\nabla_v\varphi-\nabla_w\varphi) }{4\pi\abs{v-w}^3}\right) f(v)f(w) \, dv\, dw = 0 \, .
    \end{equation}
    Now, to use the kinetic energy production, take $\varphi(v) = \chi_R(v)\abs{v}^2$, where $\chi_R$ is the smooth cutoff introduced in Definition \ref{def:projs}. Then, $\chi_R \to 1$ pointwise, while all derivatives of $\chi_R$ converge pointwise to $0$ as $R\to \infty$ and we compute the pointwise limit of the integrand as 
    \begin{equation*}
    \begin{aligned}
        \frac{\Delta_v \varphi  + \Delta_w\varphi}{8\pi\abs{v-w}} - \frac{(v-w)\cdot \left(\nabla_v\varphi-\nabla_w\varphi\right)}{4\pi\abs{v-w}^3} &\longrightarrow \frac{\Delta_v \abs{v}^2  + \Delta_w \abs{w}^2}{8\pi\abs{v-w}} - \frac{(v-w)\cdot \left(\nabla_v\abs{v}^2-\nabla_w\abs{w}^2\right)}{4\pi\abs{v-w}^3}\\
            &\quad = \frac{12}{8\pi\abs{v-w}} - \frac{(v-w)\cdot (2v-2w)}{4\pi\abs{v-w}^3}\\
            &\quad =\frac{1}{\pi\abs{v-w}} \, .
    \end{aligned}
    \end{equation*}
    Thus, assuming for the moment that we may pass the limit $R\to \infty$ under the integral in \eqref{eq:truncated_equality}, we obtain
    \begin{equation}\label{eq:truncated_equality_limit}
        \int_{\R^3}\int_{\R^3} \frac{f(v)f(w)}{\pi\abs{v-w}} \, dv\, dw = 0 \, .
    \end{equation}
    The integrand is nonnegative and we conclude $f \equiv 0$ almost everywhere. Finally, \eqref{eq:truncated_equality_limit} is justified by the Lebesgue dominated convergence theorem. We compute
    \begin{equation*}
    \begin{aligned}
        &\nabla \varphi(v) = \abs{v}^2\nabla \chi_R(v)  +  2v\chi_R(v) \qquad &\text{so that} \qquad \abs{\nabla\varphi} \le CR\\
        &\nabla^2 \varphi(v) = \abs{v}^2\nabla^2 \chi_R(v)  +  4(v \otimes \nabla \chi_R(v))^{\rm sym} + 2 \chi_R(v) \Id \qquad &\text{so that} \qquad \abs{\nabla^2\varphi} \le C \, .
    \end{aligned}
    \end{equation*}
    These simple scaling consistent estimates yield the uniform-in-$R$ bound
    \begin{equation*}
        \abs{\frac{\Delta_v \varphi  + \Delta_w\varphi}{8\pi\abs{v-w}} - \frac{(v-w)\cdot \left(\nabla_v\varphi-\nabla_w\varphi\right)}{4\pi\abs{v-w}^3}}f(v)f(w) \le \frac{C\norm{\nabla^2\varphi}_{L^{\infty}}f(v)f(w)}{\abs{v-w}} \le \frac{Cf(v)f(w)}{\abs{v-w}} \, .
    \end{equation*}
    The identity \eqref{eq:truncated_equality_limit} is then justified and the proof is complete provided the right-hand side is in $L^1(\R^6)$. This integrability is furnished by our assumption on $f$, H\"older's inequality and the Hardy--Littlewood--Sobolev inequality:
    \begin{equation*}
        \int_{\R^6}\frac{f(v)f(w)}{\abs{v-w}} \, dv \, dw \lesssim \norm{f}_{L^{\sfrac65}(\R^3)}\norm{(-\Delta)^{-1}_{\R^3} f}_{L^6(\R^3)} \lesssim \norm{f}_{L^{\sfrac65}(\R^3)}^2 < \infty \, .
    \end{equation*}
\end{proof}

\appendix

\section{The renormalized power--law solution to \texorpdfstring{$\Delta u + u^2 = 0$}{dfu}}\label{appendix:power_law_elliptic}

In this appendix, we clarify different notions under which $u_{\rm sing}(v): = \frac{-2}{|v|^2}$ does, and does not, solve the equation
$$ \Delta u + u^2 = 0 \, . $$
In a nutshell, $u_{\rm sing}$ solves the equation under renormalization\footnote{Since this is only a toy example, we perform the computation completely only for one choice of renormalization, inspired by~\cite{DPL}.  We then explain how to adjust the argument to other types of renormalizations, such as those found in~\cite{Villani96}}, but it is \textit{not} true that
$$ \Delta u_\epsilon^\rho + \left(u_\epsilon^\rho\right)^2 \longrightarrow 0 \qquad \textnormal{as} \quad \epsilon\longrightarrow 0 $$
in the sense of distributions for all (in fact, not for any) mollifications $u_\epsilon^\rho = u\ast \rho_\epsilon$.  The latter claim is relatively easy to check; let $\rho$ be unit mass, smooth, and compactly supported.  First, we note that $\Delta u_\epsilon^\rho \rightarrow \Delta u$ in the sense of distributions.  On the other hand, choosing any smooth, compactly supported, nonnegative $\varphi$ with $\varphi(0)>0$ and using Fatou's lemma yields 
$$ \liminf_{\epsilon\rightarrow 0} \int_{\R^3} \left(u^\rho_\epsilon\right)^2 (v) \varphi(v) \, dv  \geq \int_{\R^3} \liminf_{\epsilon\rightarrow 0} \left(u^\rho_\epsilon\right)^2(v) \varphi(v) \, dv = \int_{\R^3} \frac{4}{|v|^4} \varphi(v) \, dv = \infty \, .  $$
Therefore $\left(u^\rho_\epsilon\right)^2$ does not converge as $\epsilon\rightarrow 0$, and neither does $\Delta u_\epsilon^\rho + \left(u^\rho_\epsilon\right)^2$.

We will now show that $u_{\rm sing}$ satisfies the equation in a particular renormalized sense.  Towards this end, let $\beta:\R \to \R$ be a smooth function, and suppose $u$ were a smooth solution.  Then $\beta(u)$ would solve
$$
\Delta(\beta(u)) = \beta''(u)|\nabla u|^2 + \beta'(u) \Delta u = \beta''(u) |\nabla u|^2 - \beta'(u) u^2 \, .
$$
We aim to choose $\beta$ such that all terms above can be interpreted as distributions for $u = u_{\rm sing}$, and then check if
\begin{equation}\label{eq:renorm_form}
    \int_{\R^3} \left( \beta(u_{\rm sing}) \Delta \phi - \beta''(u_{\rm sing}) |\nabla u_{\rm sing}|^2 \phi + \beta'(u_{\rm sing}) u_{\rm sing}^2 \phi \right) = 0 
\end{equation}
for all test functions $\phi \in C_c^\infty(\R^3)$. Let us now make the choice $\beta(x) =- \log(1-x)$, so that $\beta'(x) = \frac{1}{1-x}$ and $\beta''(x) = \frac{1}{(1-x)^2}$. Then using that 
$$ \nabla u_{\rm sing}(v) = \frac{4v}{|v|^4} \, , $$
we find that
\begin{subequations}
\begin{align}
    \beta(u_{\rm sing}) \Delta \phi &= -\log\left(1 + \frac{2}{|v|^2}\right)\Delta \phi(v)\in L^1(\R^3) \, \\
    \beta''(u_{\rm sing}) |\nabla u_{\rm sing}|^2 &= \frac{16}{|v|^2(|v|^2+2)^2} \in L^1(\R^3) \, \\
    \beta'(u_{\rm sing})u_{\rm sing}^2 &= \frac{4}{|v|^2(|v|^2+2)} \in L^1(\R^3) \, .
\end{align}\label{sub:bulks}
\end{subequations}
Thus for $\phi \in C_c^\infty(\R^3)$ the integral in \eqref{eq:renorm_form} is well defined for $u = u_{\rm sing}$.

Now fix $\phi \in C^\infty_c(\R^3)$ and $r_0 > 0$.  On $\{|v| > r_0\}$ all functions
are smooth, so integrating by parts twice,
\begin{equation}\label{eq:excision}
    \int_{|v| > r_0} \beta(u_{\rm sing}) \Delta\phi \, dv
    = \int_{|v|>r_0} \Delta\big(\beta(u_{\rm sing})\big) \phi \, dv
    - \oint_{|v| = r_0} \Big( \beta(u_{\rm sing}) \pa_r \phi
    - \phi \, \pa_r \beta(u_{\rm sing}) \Big) dS \, ,
\end{equation}
where the outward normal derivative for the domain
$\{|v|>r_0\}$ is $-\pa_r$ in the radial variable.  We estimate the two
boundary terms.  First, since $\phi \in C^\infty_c(\R^3)$,
\begin{equation*}
    \left| \oint_{r=r_0} \beta(u_{\rm sing}) \pa_r \phi \, dS \right|
    \les r_0^2 \log\left(1 + \frac{2}{r_0^2}\right)
    \norm{\nabla \phi}_{L^\infty(\R^3)} \xrightarrow[r_0 \to 0^+]{} 0 \, .
\end{equation*}
Second, using $\pa_r \beta(u_{\rm sing}) = \beta'(u_{\rm sing}) \pa_r u_{\rm sing}$
and $\pa_r u_{\rm sing}(r_0) = 4r_0^{-3}$,
\begin{equation}\label{eq:flux_A}
    \left| \oint_{r=r_0} \phi \, \pa_r \beta(u_{\rm sing}) \, dS \right|
    \les r_0^2 \cdot \frac{r_0^2}{r_0^2+2} \cdot \frac{4}{r_0^3}
    \norm{\phi}_{L^\infty(\R^3)}
    \les \frac{r_0}{r_0^2 + 2} \norm{\phi}_{L^\infty(\R^3)}
    \xrightarrow[r_0 \to 0^+]{} 0 \, .
\end{equation}
On $\{|v| > r_0\}$ the chain rule and the equation $\Delta u_{\rm sing} = - u_{\rm sing}^2$ give
\begin{equation*}
\Delta\big(\beta(u_{\rm sing})\big)
    = \beta''(u_{\rm sing}) \abs{\nabla u_{\rm sing}}^2
    + \beta'(u_{\rm sing}) \Delta u_{\rm sing}
    = \beta''(u_{\rm sing}) \abs{\nabla u_{\rm sing}}^2
    - \beta'(u_{\rm sing}) u_{\rm sing}^2 \, .
\end{equation*}
Substituting this into~\eqref{eq:excision}, letting $r_0 \to 0^+$, and applying
the dominated convergence theorem gives~\eqref{eq:renorm_form}.

\begin{remark}[\textbf{Other renormalizations}]
The most singular term in~\eqref{eq:excision} is $\pa_r \beta(u_{\rm sing})$.  On a sphere of radius $r_0$, the bound for this term is 
$$ r_0^2 \| \phi \|_{L^\infty} \beta'(u_{\rm sing}(r_0)) |\nabla u_{\rm sing}(r_0)| \approx \beta'(-r_0^{-2}) r_0^{-1} \, . $$
Therefore the above computation can only work if $\beta'(-s) s^{\sfrac 12} \rightarrow 0$ as $s \rightarrow \infty$. Any $\beta$ with $\beta'(-s) \les (1+s)^{-1}$ will satisfy this criteria.  For example if $\beta_\delta(s) = \frac{s}{1-\delta s}$ with $\delta>0$ (inspired by~\cite{Villani96} and adjusted to reflect the negativity of the profile), then $\beta_\delta'(s) = \frac{1}{1-\delta s} + \frac{\delta s}{(1-\delta s)^2} = \frac{1}{(1-\delta s)^2}$ satisfies this criteria.  Similarly, one can check that the bulk terms are well--defined for this choice of $\beta_\delta$, and so $u_{\rm sing}$ is a renormalized solution if $\beta_\delta(s) = s(1-\delta s)^{-1}$ for any $\delta>0$.  At the heart of the matter is that renormalization allows for cancellation in the $\delta \rightarrow 0$ limit between the terms 
$$  \left[- \beta_\delta''(u_{\rm sing}) | \nabla u_{\rm sing}|^2 + \beta_\delta'(u_{\rm sing}) u_{\rm sing}^2 \right] \phi  $$
from~\eqref{eq:renorm_form}.  On the other hand, mollifier-confluent solutions do not similarly mask the singularity at $v=0$.
\end{remark}

\section{The power--law solution to Krieger--Strain}\label{appendix:power_law}

In this appendix, we derive the family of power--law steady states $f_{\rm sing}(v) = C\abs{v}^{-\sfrac32}$ to Krieger--Strain and clarify the sense in which they solve the equation. 

\subsection{Formal derivation}

We begin with a derivation of these solutions. To this end, we recall the non-divergence form of the stationary Krieger--Strain equation:
\begin{equation*}
    (-\Delta)^{-1}_{\R^3}f\Delta f + f^2 = 0.
\end{equation*}
One significant issue in finding solutions in the above form is the nonlocal term $(-\Delta)^{-1}_{\R^3}f$. To this end, we relax the above nonlocal equation to a \textit{local system} of elliptic equations by introducing an auxiliary quantity $a$:
\begin{equation}\label{eq:local_system}
    a \Delta f + f^2 = 0 \qquad \text{and} \qquad -\Delta a = f.
\end{equation}
Of course, \eqref{eq:local_system} is manifestly equivalent to the original stationary Krieger--Strain equation under decay conditions that imply $a = (-\Delta)^{-1}_{\R^3} f$. The advantage of \eqref{eq:local_system} is that it is entirely local and no decay requirements are necessary to define a notion of weak solution. Moreover, formally, we can even eliminate $f$ leading to a local, albeit nonlinear, equation for $a$:
\begin{equation}\label{eq:a_pde}
    -a\Delta^2 a + (\Delta a)^2 = 0.
\end{equation}
Here $\Delta^2 a$ is the bi-Laplacian. Finding nontrivial solutions to the nonlinear equation \eqref{eq:a_pde} is a substantially simpler problem. We follow a standard approach here.

The equation is manifestly rotation invariant, and making a radial ansatz yields 
\begin{equation*}
    a\left(\partial_{rr} + \frac{2}{r} \partial_r\right)^2 a = \left(\partial_{rr} a + \frac{2}{r} \partial_r a\right)^2.
\end{equation*}
Making the substitution $b(r) = r a(r)$ so that $b'(r) = a(r) + ra'(r)$, $b''(r) = 2a'(r) + ra''(r)$, and applying the identity $\frac{b''}{r} = a'' + \frac{2}{r}a'$ leads to an autonomous ODE:
\begin{equation*}
    b \frac{d^4}{d r^4} b = \left[\frac{d^2}{d r^2} b \right]^2.
\end{equation*}
The above ODE clearly admits a two--parameter family of solutions $b = C_1 + C_2 r$.
However, it additionally admits another power law solution. Making the ansatz $b = r^k$ for $k \neq 0$, one finds
\begin{equation*}
    k(k-1)(k-2)(k-3) = k^2(k-1)^2 \qquad \text{or equivalently} \qquad k = 0,\,1,\,\text{or }\sfrac32.
\end{equation*}
Since the ODE is autonomous and homogeneous, this yields another two-parameter family of solutions:
\begin{equation*}
    b(r) = C (r - r_0)^{3/2} \text{ for any }r > r_0, \qquad \text{with parameters }C \in\R \quad\text{and} \quad r_0  \geq 0 \, .
\end{equation*}
The specific solution of interest occurs with $r_0 = 0$, where unraveling the introduced auxiliary quantities leads to
\begin{equation*}
    f_{\rm sing}(v) \coloneqq C\abs{v}^{-\sfrac32} \qquad \text{and} \qquad a_{\rm sing}(v) \coloneqq -\frac{4C}{3}\abs{v}^{\sfrac12}.
\end{equation*}
Lastly, we note for the record that the autonomous ODE---and consequently the local system \eqref{eq:local_system}---has many more solutions, including two (modified) spherical Bessel functions: for any $C,\, K \in \R$, the pairs
\begin{equation*}
    (f,a) = \left(CK^2\frac{\sin(K\abs{v})}{\abs{v}}, C\frac{\sin(K\abs{v})}{\abs{v}}\right) \qquad \text{and} \qquad (f,a) = \left(CK^2 \frac{\sinh(K\abs{v})}{\abs{v}},-C\frac{\sinh(K\abs{v})}{\abs{v}}\right)
\end{equation*}
solve the local system \eqref{eq:local_system}.

\subsection{Notion of solution}

The derivation in the preceding subsection verified the following conclusion:
\begin{lemma}\label{lem:solution_sense}
For any $C > 0$, define the pair of functions 
\begin{equation*}
    f_{\rm sing}(v) \coloneqq C\abs{v}^{-\sfrac32} \qquad \text{and} \qquad a_{\rm sing}(v) \coloneqq -\frac{4C}{3}\abs{v}^{\sfrac12}.
\end{equation*}
Then, $f_{\rm sing},\, a_{\rm sing} \in C^\infty(\R^3 \setminus\{0\})$ and $(f_{\rm sing},a_{\rm sing})$ satisfy the local elliptic system \eqref{eq:local_system} pointwise on $\R^3 \setminus \{0\}$.
\end{lemma}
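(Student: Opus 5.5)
The lemma is a direct pointwise verification, so I will simply compute with the radial Laplacian on $\R^3\setminus\{0\}$. Smoothness away from the origin is immediate, since $|v|$ is smooth there and $|v|^{s}$ is a smooth function of $|v|$ for $|v|>0$. For radial functions $h(r)$ I will use $\Delta h = h'' + \frac{2}{r}h'$, which for a power gives $\Delta r^{s} = s(s+1) r^{s-2}$.

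\textbf{Second equation, $-\Delta a_{\rm sing} = f_{\rm sing}$.} With $s=\sfrac12$ we get $\Delta |v|^{\sfrac12} = \frac12\cdot\frac32 |v|^{-\sfrac32} = \frac34|v|^{-\sfrac32}$. Hence
\[
-\Delta a_{\rm sing} = \frac{4C}{3}\cdot\frac34 |v|^{-\sfrac32} = C|v|^{-\sfrac32} = f_{\rm sing}.
\]

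\textbf{First equation, $a\Delta f + f^2 = 0$.} With $s=-\sfrac32$ we get $\Delta |v|^{-\sfrac32} = (-\frac32)(-\frac12)|v|^{-\sfrac72} = \frac34|v|^{-\sfrac72}$. Then
\[
a_{\rm sing}\Delta f_{\rm sing} = -\frac{4C}{3}|v|^{\sfrac12}\cdot\frac{3C}{4}|v|^{-\sfrac72} = -C^2|v|^{-3},
\]
which cancels $f_{\rm sing}^2 = C^2|v|^{-3}$.

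There is no real obstacle here. The only points needing care are the sign conventions and the constant $\frac43$; as a cross-check, both agree with the ODE derivation via $b = ra$ and $b\propto r^{\sfrac32}$.
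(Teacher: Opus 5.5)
Your computation is correct. The identity $\Delta r^{s}=s(s+1)r^{s-2}$ gives $-\Delta a_{\rm sing}=f_{\rm sing}$ and $a_{\rm sing}\Delta f_{\rm sing}=-C^2|v|^{-3}=-f_{\rm sing}^2$. These are exactly the two pointwise identities the paper records right after the lemma.

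The paper reaches the lemma differently: it justifies it by the preceding derivation rather than by direct verification. That derivation runs as follows:
\begin{enumerate}
\item Eliminate $f$ to get $-a\Delta^2 a+(\Delta a)^2=0$.
\item For radial $a$, substitute $b=ra$, so that $\Delta a=b''/r$ and $\Delta^2 a=b''''/r$.
\item This yields the autonomous ODE $b\,b''''=(b'')^2$.
\item The ansatz $b=r^k$ gives $k(k-1)(k-2)(k-3)=k^2(k-1)^2$, forcing $k\in\{0,1,\tfrac32\}$.
\item The pair is recovered by setting $f=-\Delta a$.
\end{enumerate}

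Your route is shorter and is all the statement as posed requires. The paper's route also explains why $\tfrac32$ is the only nontrivial power and why the constant is $-\tfrac{4C}{3}$ once $f=-\Delta a$ is imposed. It also produces the wider families discussed in the appendix: the translates $b=C(r-r_0)^{3/2}$ and the $\sin$/$\sinh$ profiles.
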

\noindent Note that both $f_{\rm sing}$ and $\Delta a_{\rm sing}$ belong to $L^1_{\rm loc}(\R^3)$ and the Poisson equation relating them holds in the sense of distributions. However, neither of the terms appearing in the other equation---namely $f_{\rm sing}^2 = C^2\abs{v}^{-3}$ and $a_{\rm sing} \Delta f_{\rm sing} = -C^2\abs{v}^{-3}$---belong to $L^1_{\rm loc}(\R^3)$; the cancellation occurs only pointwise, not in the sense of distributions.

Additionally, this singular ``solution'' $f_{\rm sing}$ does not decay sufficiently fast at infinity to make sense of any of the original nonlocal formulations of $Q_{\rm KS}(f)$ in \eqref{defn:collision_operator_smooth}; indeed, $a_{\rm sing}$ \textit{cannot} be given by convolution with the usual Newtonian potential---even the sign is wrong for this to be possible.\footnote{The sign switch is due to the following issue regarding inversion of the Laplacian.  Define $f_R(v) = |v|^{-\sfrac 32} \mathbf{1}_{|v|<R}$ for $R>0$. Then one can invert the Newton potential on this object, and direct computation gives $a_R(v) = 2 R^{\sfrac 12} - \sfrac 43 |v|^{\sfrac 12}$ for $|v|\leq R$ and $a_R(v) =  \sfrac 23 R^{\sfrac 32}|v|^{-1}$ for $|v| \geq R$, so that $a_R(v)$ is nonnegative for every finite $R>0$.  But by attempting to define a ``homogeneous'' inverse Laplacian as $R \rightarrow \infty$, we implicitly renormalized by subtracting the diverging constants $2R^{\sfrac 12}$, and the remainder is negative.}

\subsection{Mollifier confluence}

Notice that after mollification, $f_\eps = f_{\rm sing} \ast \eta_\eps \in C^\infty(\R^3)$. However, due to the slow decay at infinity, we still only have $f_\eps \in L^p(\R^3)$ for $2 < p \le \infty$, which is insufficient to make sense of the nonlocality in the Krieger--Strain operator. Consequently, $f_{\rm sing}$ already lies outside the purview of mollifier-confluent steady solutions as defined in Definition~\ref{def:moll:conf}. However, the following lemma detects a more severe issue; namely, that there is a signed singularity which appears in the mollifier-confluent version of the system~\eqref{eq:local_system} for $f_{\rm sing}$.
\begin{lemma}
    Fix a singular solution pair $(f_{\rm sing},a_{\rm sing})$ as in Lemma \ref{lem:solution_sense} for some $C>0$ arbitrary. Let $\eta \in C^\infty_c(\R^3)$ be any standard mollifier and set
    \begin{equation*}
        f_\eps \coloneqq f_{\rm sing} \ast \eta_{\eps} \qquad \text{and similarly} \qquad a_{\eps} \coloneqq a_{\rm sing} \ast \eta_{\eps}.
    \end{equation*}
    Then, the following limit holds:
    \begin{equation*}
       \lim_{\eps \to 0^+} \left[ a_\eps \Delta f_\eps + f_\eps^2 \right] = \left(\frac{32\pi C^2}{3}\right)\delta_0 \qquad \text{in the sense of distributions.}
    \end{equation*}
\end{lemma}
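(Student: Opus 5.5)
The plan is to exploit the exact scaling of the problem and reduce the whole statement to a single $\eps$-independent profile. Since $f_{\rm sing}$ is homogeneous of degree $-\sfrac32$ and $a_{\rm sing}$ of degree $\sfrac12$, we have
\[
f_\eps(v) = \eps^{-\sfrac32} F(v/\eps), \qquad a_\eps(v) = \eps^{\sfrac12} A(v/\eps),
\]
where $F = f_{\rm sing}\ast\eta$ and $A = a_{\rm sing}\ast\eta$. Hence
\[
G_\eps \coloneqq a_\eps\Delta f_\eps + f_\eps^2 = \eps^{-3} G(v/\eps), \qquad G \coloneqq A\Delta F + F^2 .
\]
It therefore suffices to show that $G\in L^1(\R^3)$ and to compute $\int_{\R^3} G = \sfrac{32\pi C^2}{3}$. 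Indeed, pairing with $\varphi\in C^\infty_c$ gives $\int G(y)\varphi(\eps y)\,dy \to \varphi(0)\int G$ by dominated convergence.

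\textbf{Integrability of $G$.} Since $\eta$ is compactly supported in $B(0,1)$, $F$ and $A$ are smooth, and for $|y|\ge 2$ we Taylor expand the convolution. Because $\eta$ has unit mass, $F = f_{\rm sing} + O(|y|^{-\sfrac72})$, $A = a_{\rm sing} + O(|y|^{-\sfrac32})$, and $\Delta F = \Delta f_{\rm sing} + O(|y|^{-\sfrac{11}2})$. The first-order Taylor terms involve $\int z\eta(z)\,dz$; I would take $\eta$ radial (a standard mollifier) so that these vanish, or otherwise track them and check that they cancel in $G$. Using the pointwise identity $a_{\rm sing}\Delta f_{\rm sing}+f_{\rm sing}^2=0$ from Lemma~\ref{lem:solution_sense}, the leading terms cancel. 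The remaining errors are of the form $|y|^{\sfrac12}\cdot|y|^{-\sfrac{11}2}$ and $|y|^{-\sfrac32}\cdot|y|^{-\sfrac72}$, i.e. $O(|y|^{-5})$, which is integrable at infinity. This cancellation is essential, since each of $A\Delta F$ and $F^2$ alone behaves like $|y|^{-3}$, which is not integrable.

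\textbf{Computing the mass.} Truncate to $B_R$ and let $R\to\infty$. I would write $G = \Delta(AF)$-type divergence terms plus a remainder, using $-\Delta A = F$ (true distributionally, and preserved by convolution). Concretely,
\[
A\Delta F + F^2 = A\Delta F - F\Delta A = \nabla\cdot(A\nabla F - F\nabla A).
\]
The divergence theorem then gives
\[
\int_{B_R} G = \oint_{|y|=R} (A\,\partial_r F - F\,\partial_r A)\,dS .
\]
As $R\to\infty$, we replace $A,F$ by $a_{\rm sing},f_{\rm sing}$ up to errors that are $o(1)$ after multiplying by $R^2$; one checks $R^{\sfrac12}\cdot R^{-\sfrac72}\cdot R^2\to0$. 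With $f=C r^{-\sfrac32}$ and $a=-\tfrac{4C}{3}r^{\sfrac12}$, the flux integrand is
\[
a\,\partial_r f - f\,\partial_r a = \left(-\tfrac{4C}{3}r^{\sfrac12}\right)\left(-\tfrac32 C r^{-\sfrac52}\right) - C r^{-\sfrac32}\left(-\tfrac{2C}{3}r^{-\sfrac12}\right) = 2C^2r^{-2} + \tfrac23 C^2 r^{-2} = \tfrac83C^2 r^{-2}.
\]
Multiplying by $4\pi R^2$ yields $\tfrac{32\pi C^2}{3}$, which matches the claimed constant. This confirms the approach, which shows that the formal solution carries a point source at the origin detected entirely by the boundary flux at infinity of the rescaled profile.

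\textbf{Main obstacle.} The main difficulty is the justification of the far-field expansions in the first step. One needs the error bounds in the form $F = f_{\rm sing}+O(|y|^{-\sfrac72})$, $\nabla F$, $\nabla A$ with matching decay, and these must hold uniformly, including the correct treatment of the first-moment terms if $\eta$ is not radial. In the non-radial case I would expect the $O(|y|^{-4})$ cross terms to be odd and to integrate to zero over spheres, both in the flux and in the absolute integrability. The cleanest route is to restrict to radial $\eta$, or to use only the flux formula, which requires decay merely of order $o(R^{-2})$ in the flux errors.
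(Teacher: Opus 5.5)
Your proposal is correct, but it is organized differently from the paper's proof. The paper never rescales. At each fixed $\eps$ it uses $-\Delta a_\eps = f_\eps$ to write $a_\eps\Delta f_\eps + f_\eps^2 = \nabla\cdot(a_\eps\nabla f_\eps - f_\eps\nabla a_\eps)$. It then observes that this flux converges in $L^1_{\rm loc}$ to $a_{\rm sing}\nabla f_{\rm sing} - f_{\rm sing}\nabla a_{\rm sing} = \frac{8C^2}{3}\frac{v}{|v|^3}$, using only $a_{\rm sing}\in L^\infty_{\rm loc}$, $\nabla f_{\rm sing}\in L^1_{\rm loc}$, $\nabla a_{\rm sing}\in L^3_{\rm loc}$ and $f_{\rm sing}\in L^{3/2}_{\rm loc}$. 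It finishes with the classical identity $\nabla\cdot(v/|v|^3)=4\pi\delta_0$.

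You instead use exact homogeneity to reduce everything to the single profile $G=A\Delta F+F^2$. You prove $G\in L^1$ by a far-field expansion that exploits the pointwise cancellation $a_{\rm sing}\Delta f_{\rm sing}+f_{\rm sing}^2=0$, and you compute $\int G$ as a flux through $|y|=R\to\infty$. By scaling, your limit $R\to\infty$ is dual to the paper's limit $\eps\to0$ against a fixed test function, and the same flux density $\frac83 C^2 r^{-2}$ appears in both.

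The paper's route is shorter and needs no Taylor expansions. Yours costs the far-field analysis and relies on exact homogeneity, but it gives more. Since $\|G_\eps\|_{L^1}=\|G\|_{L^1}$ is uniformly bounded, the convergence holds against bounded continuous test functions (as signed measures), not merely distributionally. It also shows explicitly that the defect concentrates at scale $\eps$.

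The obstacle you flag at the end is not a real one: you do not need $\eta$ radial. Without radial symmetry, the first-moment terms contribute only $O(|y|^{-4})$ to $G$, which is already integrable at infinity in $\R^3$, so no oddness argument is needed. In fact, with $\mu=\int z\,\eta(z)\,dz$, these terms sum to $-\mu\cdot\nabla\bigl(a_{\rm sing}\Delta f_{\rm sing}+f_{\rm sing}^2\bigr)=0$ identically. In the flux they contribute $O(R^{-3})$, which after multiplying by $4\pi R^2$ still vanishes as $R\to\infty$.
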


\begin{proof}
Without loss of generality, set $C=1$, as the case $C\neq 1$ is recovered by homogeneity. Away from $v = 0$, all functions are smooth and the desired convergence holds. The only issue is evaluating the limit near $v = 0$.

    To this end, we work in divergence form. Note that by standard properties of mollifiers, $-\Delta a_\eps = f_\eps$. Therefore, working pointwise, we find
    \begin{equation*}
        \nabla \cdot \left(a_\eps \nabla f_\eps - \nabla a_\eps f_\eps\right) = a_\eps \Delta f_\eps + f_\eps^2,
    \end{equation*}
    and the divergence and nondivergence forms agree for $\eps > 0$. Computing the flux at the limit $\eps = 0$, we find
    \begin{equation*}
        a_{\rm sing} \nabla f_{\rm sing} - \nabla a_{\rm sing} f_{\rm sing} = 2\frac{v}{\abs{v}^3} + \frac{2}{3}\frac{v}{\abs{v}^3} = \frac{8v}{3\abs{v}^3}
    \end{equation*}
    Additionally, notice that $a_{\rm sing} \in L^\infty_{\rm loc}(\R^3)$, $\nabla a_{\rm sing} \in L^3_{\rm loc}(\R^3)$, $\nabla f_{\rm sing} \in L^1_{\rm loc}(\R^3)$, and $f_{\rm sing} \in L^{\sfrac32}_{\rm loc}(\R^3)$. Consequently, by standard properties of mollifiers,
    \begin{equation*}
        a_\eps \nabla f_\eps - \nabla a_\eps f_\eps \xrightarrow{\eps \to 0^+} a_{\rm sing} \nabla f_{\rm sing} - \nabla a_{\rm sing} f_{\rm sing} \qquad \text{in }L^1_{\rm loc}(\R^3).
    \end{equation*}
    Thus, for any test function $\varphi \in C^\infty_c(\R^3)$, we conclude
    \begin{equation*}
    \begin{aligned}
        \lim_{\eps \to 0^+}\int_{\R^3} \varphi \left[a_\eps \Delta f_\eps + f_\eps^2\right] \dd v &=  -\int_{\R^3} \nabla \varphi \cdot \left(a_{\rm sing} \nabla f_{\rm sing} - \nabla a_{\rm sing} f_{\rm sing}\right) \dd v\\
            &= - \frac{8}{3} \int_{\R^3} \nabla \varphi \cdot \left( \frac{v}{\abs{v}^3}\right) \dd v = \frac{32\pi}{3} \varphi(0).
    \end{aligned}
    \end{equation*}
    The last equality follows from the classical computation $\nabla \cdot \frac{v}{\abs{v}^3} = 4\pi \delta_0$ in the sense of distributions and completes the proof.
\end{proof}

\medskip

\noindent\textsc{Department of Mathematics, Purdue University, West Lafayette, IN, USA.}
\vspace{.03in}
\newline\noindent\textit{Email address}: \href{mailto:ngismond@purdue.edu}{ngismond@purdue.edu}.
\smallskip

\noindent\textsc{Department of Mathematics, University of Chicago, Chicago, Illinois.}
\vspace{.03in}
\newline\noindent\textit{Email address}: \href{mailto:wgolding@uchicago.edu}{wgolding@uchicago.edu}.
\smallskip

\noindent\textsc{Department of Mathematics, University of Notre Dame, Notre Dame, IN, USA.}
\vspace{.03in}
\newline\noindent\textit{Email address}: \href{mailto:mnovack@nd.edu}{mnovack@nd.edu}.

\end{document}